\numberwithin{equation}{section}
\def\eps{\varepsilon }
\newcommand\R{\mathbb R}
\newcommand\C{\mathbb C}
\newcommand\N{\mathbb N}
\def\eps{\varepsilon}
\newcommand\br{\begin{remark}}
\newcommand\er{\end{remark}}
\newcommand\bp{\begin{pmatrix}}
\newcommand\ep{\end{pmatrix}}
\newcommand\be{\begin{equation}}
\newcommand\ee{\end{equation}}
\newcommand{\bs}{\left\{}
\newcommand\es{\right.}
\newcommand\ba{\begin{array}}
\newcommand\ea{\end{array}}
\newcommand{\bap}{\begin{app}}
\newcommand{\eap}{\end{app}}
\newcommand{\begs}{\begin{exams}}
\newcommand{\eegs}{\end{exams}}
\newcommand{\beg}{\begin{example}}
\newcommand{\eeg}{\end{exaplem}}
\newcommand{\bpr}{\begin{proposition}}
\newcommand{\epr}{\end{proposition}}
\newcommand{\bt}{\begin{theorem}}
\newcommand{\et}{\end{theorem}}
\newcommand{\bc}{\begin{corollary}}
\newcommand{\ec}{\end{corollary}}
\newcommand{\bl}{\begin{lemma}}
\newcommand{\el}{\end{lemma}}
\newcommand{\bd}{\begin{definition}}
\newcommand{\ed}{\end{definition}}
\newcommand{\brs}{\begin{remarks}}
\newcommand{\ers}{\end{remarks}}
\newtheorem{theo}{Theorem}[section]
\newtheorem{cor}[theo]{Corollary}
\newtheorem{exams}[theo]{Examples}
\numberwithin{equation}{section}
\newcommand{\CC}{{\mathbb C}}
\newtheorem{theorem}{Theorem}[section]
\newtheorem{proposition}[theorem]{Proposition}
\newtheorem{corollary}[theorem]{Corollary}
\newtheorem{lemma}[theorem]{Lemma}
\newtheorem{definition}[theorem]{Definition}
\newtheorem{example}[theorem]{Example}
\newtheorem{remark}[theorem]{Remark}
\newcommand{\beq}{\begin{equation}}
\newcommand{\eeq}{\end{equation}}
\newtheorem{remarks}[subsection]{Remarks}
\newcommand{\pd}{\partial}
\renewcommand{\hat}{\widehat}
\title{The reflection coefficient of a fractional reflector}
\author{Laurent Demanet\thanks{Department of Mathematics, Massachusetts Institute of Technology, 77 Massachusetts Ave, Cambridge, MA 02139, USA} $\,$ and Olivier Lafitte\thanks{LAGA, UMR7539, CNRS and USPN, 99 avenue Jean-Baptiste Cl\'ement, F-93430}\thanks{IRL 3457 CRM-CNRS, Centre de recherches math\'ematiques,  chemin de la Tour, Universit\'e de Montr\'eal, QC, Canada,}}
\begin{document}
\maketitle

\begin{abstract}

This paper considers the question of characterizing the behavior of waves reflected by a fractional singularity of the wave speed profile, i.e., 
of the form 
\[
c(x_1, x_2, x_3) = c_0 \left(1 + \left( \frac{x_1}{\ell}\right)_{+}^\alpha \right)^{-1/2},
\]
for $\alpha > 0$ not necessarily integer. We first focus on the case of one spatial dimension and a harmonic time dependence. 
We define the reflection coefficient $R$ from a limiting absorption principle. We provide
an exact formula for $R$ in terms of the solution to a Volterra equation. We obtain the asymptotic limit of this coefficient
in the large $\ell \omega / c_0$ regime as
\[
R = \frac{\Gamma(\alpha + 1)}{(2 i)^{\alpha + 2}} \left( \frac{c_0}{\ell \omega} \right)^{\alpha} + \mbox{lower order terms.}
\]
The amplitude is proportional to $\omega^{-\alpha}$, and the phase rotation behavior is obtained from the $i^{-(\alpha+2)}$ factor.
The proof method does not rely on representing the solution by special functions, since $\alpha > 0$ is general.

In the multi-dimensional layered case, we obtain a similar result where the nondimensional variable $\ell \omega / c_0$ is modified
to account for the angle of incidence. The asymptotic analysis now requires the waves to be non-glancing. 
The resulting reflection coefficient can now be interpreted as a Fourier multiplier of order $- \alpha$.

In practice, the knowledge of the dependency of both the amplitude and the phase of $R$ on $\omega$ and $\alpha$ might be able
to inform the kind of signal processing needed to characterize the fractional nature of reflectors, for instance in geophysics.

\end{abstract}

\section*{Acknowledgments}

L.D. is supported in part by the Air Force Office of Scientific Research under grant
FA9550-17-1-0316. O. L.  thanks MIT for their hospitality during short visits from 2013 to 2022 and L.D. thanks LAGA-USPN for their hospitality in 2021 and 2022. Partial results were presented at the Days on Diffraction conference in 2014 \cite{DD2014}, and the Waves 2015 conference \cite{W2015}. 


\section{Introduction}

In this paper, we study solutions of the acoustic wave equation in three dimensions,
\be
\label{wave-eq}
\left( \frac{1}{c^2(\mathbf{x})} \pd_{t^2} - \Delta_{\mathbf{x}} \right) u = 0, \qquad \mathbf{x} = (x_1,x_2,x_3) \in \R^3,
\ee
where $c(\mathbf{x})$ presents a singularity of fractional type across the planar interface $x_1 = 0$.

\subsection{Motivation}

Geophysics offers many scenarios of wave reflections, such as point diffractions, reflections off of a sharp planar or curved interface, scattering from rough interfaces, etc. This paper considers another important scenario: that of a ``soft" planar interface with a fractional jump in the value of the wave speed, of the form (constant $+ (x_1)_{+}^{\alpha}$) for some $\alpha > 0$, where the interface is at $x_1 = 0$, and $y_{+}$ is the positive part of $y$. This model might arise as is, or might be the result of upscaling from microscopic mixtures of two materials with linearly varying volume fractions, as predicted by percolation theory applied to random mixture models \cite{BernabeHerrmann2004}. The planar fractional reflector model is a special case of a layered model, commonplace in geophysics. This paper does not address the case of fine layering -- alternating thin layers of different materials or sediments. Furthermore, we only consider acoustic (P) waves in this paper.

\bigskip

The notion of reflection coefficient is standard for acoustic waves propagating in a medium with a discontinuity along a planar interface. For a jump in the wave speed, or in the index of refraction, the Fresnel equations predict the amplitudes of the reflected and transmitted waves as a function of the angle of incidence. The case (constant + $(x_1)_+$) is also known and can be handled with Airy functions \cite{Godin}, which we rehearse in a later section for completeness.

\bigskip

It is much less well known how to deal with other kinds of singularities in the medium properties, such as the square root (constant + $(x_1)^{1/2}_+$) singularity. It was argued on physical grounds that reflection about a singular interface characterized by a fractional exponent should be the result of the action of a fractional integrator, of the same order, on the incident wave. See for instance \cite{Herrmann2001, HerrmannSEG2001, Herrmann2003}. Recognizing the fractional type of a reflector from the particular shape of the oscillations of the recorded waves in a seismic trace is an interesting interpretation question in seismic stratigraphy.

\subsection{Setup}

This paper makes the ``fractional integrator" heuristic precise by characterizing the reflection coefficient as a pseudodifferential  operator of fractional order. The dependence of the amplitudes of the reflected and transmitted waves on the angle of incidence appears explicitly in the expression of the pseudodifferential symbol. 

\bigskip

In the presence of a singularity of $c(x_1, x_2, x_3)$ at $x_1 = 0$, to each incident wave (in $x_1 < 0$) corresponds a transmitted and a reflected wave (both in $x_1 \geq 0$). The reflection coefficient $R$ is the amplitude of the reflected wave, for this unit-normalized incident wave. More precisely, with $\mathbf{x} = (x_1, x_2, x_3)$ and $\mathbf{k} = (k_1, k_2, k_3)$, we consider a fixed incident plane wave
\begin{equation}\label{eq:incident}
u_i(\mathbf{x}, t) = e^{i (\omega t - \mathbf{k} \cdot \mathbf{x})}, \qquad\qquad \omega = | \mathbf{k} | c_0,
\end{equation}
which is clearly a wave that propagates in the direction of $\mathbf{k}$ as $t$ grows, right-going (obliquely) if we further assume $k_1 > 0$. All the waves in this paper have a harmonic time dependence $e^{i \omega t}$. The complete wave $u(\mathbf{x}, t)$ is obtained as a superposition
\begin{equation}\label{eq:RT-intro}
u = u_i + R u_r = T u_t,	\qquad\qquad \mbox{($u_r$ reflected, $u_t$ transmitted)} 
\end{equation}
The concept of transmitted wave $u_t$ is clear in a uniform medium $c_0$: it is a right-going plane wave with $k_1 > 0$, like $u_i$. 

In order to generalize the concept of right-going, or outgoing to $x_1 > 0$, in the case when $c$ depends on $x_1$, observe that a right-going wave like 
\[
e^{- i \mathbf{k} \cdot \mathbf{x}}, \qquad k_1 = \sqrt{\frac{\omega^2}{c_0^2} - k_2^2 - k_3^2} > 0,
\]
corresponds a limiting absorption principle: as $k_1$ is complexified as $k_1 (1 + i \sigma)$, the wave has a $e^{\sigma x_1}$ prefactor, and is now decaying as $x_1 \to + \infty$ provided $\sigma < 0$. This spatial decay can also be seen as a temporal decay (dissipation) since the wave reaches large $x_1 > 0$ for large $t$. In the case when $c$ depends on $x_1$, the limiting absorption principle is formalized by a complex extension at the level of the model equation (\ref{eq:model-sigma}). The definitions of outgoing and incoming waves, respectively at $\pm \infty$, follow and can be found in Definition \ref{def:polarized-waves}. The transmitted wave is properly defined as a member of the one-dimensional space of waves outgoing at $+\infty$, which in turn gives meaning to $R$ in (\ref{eq:RT-intro}).

\subsection{Main result}

In this paper we consider the ``fractional ramp" model for the wave speed,
\be\label{eq:c_lambda}
c^{-2}(\mathbf{x}) = c_0^{-2} \left[ 1 + \left( \frac{x_1}{\ell} \right)_{+}^\alpha \right],
\ee
where $\ell > 0$, $\alpha > 0$, and $( \cdot )_+$ denotes the positive part. The length scale $\ell$ is a skin depth, and $\ell^{- \alpha}$ can be interpreted as the strength of the fractional reflector.

The incident wave in $x_1 < 0$ is taken as (\ref{eq:incident}), where we further write the wave vector as
\[
(k_1, k_2, k_3) = \frac{\omega}{c_0} \, \left( \sqrt{1 - \eta^2} \, , \eta_2, \eta_3 \right), \qquad \eta^2 = \eta_2^2 + \eta_3^2.
\]
The important non-dimensional parameter is
\begin{equation}\label{eq:theta-def}
\theta = \left( \frac{c_0}{\ell \omega} \right)^{\alpha}  (1 - \eta^2)^{- \frac{\alpha + 2}{2}}.
\end{equation}
Our main result is as follows.

\begin{theorem}\label{teo:main}
For all $\alpha > 0$, and as $\theta \to 0$,
\[
R(\theta) = \frac{\Gamma(\alpha+1)}{(2i)^{\alpha+2}} \, \theta  + O \left( \theta^{1 + \min \left( \frac{1}{\alpha}, 1 \right) } \right).
\]
\end{theorem}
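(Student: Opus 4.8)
The plan is to reduce the layered problem to a single second-order ODE in $x_1$, to write $R$ exactly through the logarithmic derivative at the interface of the outgoing solution, and then to expand that solution to first order in $\theta$, the only nontrivial analytic input being one explicit oscillatory integral.

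First I would separate variables, $u = e^{i(\omega t - k_2x_2 - k_3x_3)}v(x_1)$, so that $v'' + (\omega^2c^{-2}(x_1) - k_2^2 - k_3^2)v = 0$, and rescale by $\xi = k_1x_1$ with $k_1 = (\omega/c_0)\sqrt{1-\eta^2}$. A direct computation turns the equation into
\[
\ddot v + \big(1 + \theta\,\xi_+^{\alpha}\big)v = 0,
\]
with $\theta$ exactly the quantity in (\ref{eq:theta-def}); for $\xi<0$ one has $v = e^{-i\xi} + Re^{i\xi}$. Thus $R$ depends only on $(\theta,\alpha)$, the multidimensional statement reduces to the one-dimensional one, and the non-glancing hypothesis is what keeps $1-\eta^2$ bounded below so that $\theta\to0$ as $\omega\to\infty$.

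Let $\psi$ denote the outgoing solution on $\xi>0$ selected by the limiting absorption principle (Definition~\ref{def:polarized-waves}), normalized arbitrarily. Matching $\psi,\psi'$ to $e^{-i\xi}+Re^{i\xi}$ at $\xi=0$ gives the exact formula
\[
R = \frac{L+i}{i-L}, \qquad L = \frac{\psi'(0)}{\psi(0)},
\]
so everything reduces to computing $L$. I would obtain $\psi$ from a Volterra integral equation built on the free solutions $e^{\pm i\xi}$ and the outgoing Green's function of $\partial_\xi^2 + 1$, and expand $\psi = e^{-i\xi} + \theta\psi_1 + \dots$, where $\psi_1$ is the outgoing response to the forcing $\xi_+^\alpha e^{-i\xi}$. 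Evaluating the Green's function at the origin, both $\psi_1(0)$ and $\psi_1'(0)$ reduce to the single convergent integral
\[
J = \int_0^\infty \xi^\alpha e^{-2i\xi}\,d\xi = \Gamma(\alpha+1)\,(2i)^{-(\alpha+1)},
\]
the last equality being the analytic continuation of $\int_0^\infty\xi^\alpha e^{-s\xi}\,d\xi = \Gamma(\alpha+1)s^{-(\alpha+1)}$ to $s=2i$, which is precisely the value the limiting absorption selects. A short calculation gives $L + i = \theta J + O(\theta^2)$ and $i-L = 2i + O(\theta)$, whence $R = \theta J/(2i) + \dots = \Gamma(\alpha+1)(2i)^{-(\alpha+2)}\,\theta + \dots$, the claimed leading term.

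The hard part will be the remainder, and the exponent $\min(1/\alpha,1)$ points to its origin. The perturbation series is not uniformly valid: the term $\theta\xi^\alpha$ ceases to be small once $\xi\gtrsim\xi_* := \theta^{-1/\alpha}$, so the naive Born iterates of $\psi$ grow and their integrals against $\xi^\alpha$ only converge in the limiting-absorption sense. I would therefore split at $\xi_*$: on $\xi<\xi_*$ the Volterra estimate controls $\psi - e^{-i\xi} - \theta\psi_1$ and contributes at order $\theta^2$, while on $\xi>\xi_*$ I would replace the perturbative expansion by a Liouville--Green (WKB) representation---legitimate because $1+\theta\xi^\alpha$ has no turning point for $\xi>0$---and bound the oscillatory tail. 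Two effects then compete in $L$: a regular amplitude correction of relative size $O(\theta)$ and a correction of relative size $O(\theta^{1/\alpha})$ coming from the accumulated phase together with the non-integer singularity of $\xi_+^\alpha$ at the interface, whose maximum produces $O(\theta^{1+\min(1/\alpha,1)})$. The genuine obstacles are (i) making the limiting-absorption convergence at $+\infty$ rigorous so that $L$ and $R$ are well defined, and (ii) obtaining a remainder bound uniform across the scale $\xi_*$; both are sharpened by $\alpha$ being non-integer, which is exactly the regime in which special-function representations are unavailable.
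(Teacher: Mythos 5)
Your reduction to the model equation, the exact relation $R=(i+L)/(i-L)$ (the paper's (\ref{eq:qR})), and the identification of the leading coefficient through the regularized integral $\int_0^\infty\xi^\alpha e^{-2i\xi}d\xi=\Gamma(\alpha+1)(2i)^{-(\alpha+1)}$ (the paper's (\ref{semi-convergent}) and (\ref{eq:Gamma})) are all correct. The genuine gap is in your error control, and it invalidates the proposed structure of the proof. The Born iterates built on the free waves $e^{\pm i\xi}$ contain \emph{secular} terms: in the first iterate, the Green's-function product $e^{-i(\xi-\eta)}\cdot\eta^\alpha e^{-i\eta}=\eta^\alpha e^{-i\xi}$ is non-oscillatory in $\eta$, so
\[
\theta\psi_1(\xi)\ \ni\ \frac{\theta}{2i}\,\frac{\xi^{\alpha+1}}{\alpha+1}\,e^{-i\xi},
\]
which is nothing but the first term of the expansion of the true phase $e^{-i\phi(\xi)}$, $\phi(\xi)-\xi\simeq\tfrac{\theta}{2}\xi^{\alpha+1}/(\alpha+1)$. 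Hence the free-wave expansion breaks down at $\xi\sim\theta^{-1/(\alpha+1)}$, far below your matching scale $\xi_*=\theta^{-1/\alpha}$: at $\xi_*$ the ``correction'' $\theta\psi_1$ has size $\theta\xi_*^{\alpha+1}=\theta^{-1/\alpha}\to\infty$. For the same reason the free-basis Volterra operator on $[0,\xi_*]$ has norm of order $\theta\int_0^{\xi_*}\eta^\alpha d\eta\sim\theta^{-1/\alpha}\gg1$ and is not a contraction, so the claim that the Volterra estimate controls $\psi-e^{-i\xi}-\theta\psi_1$ ``at order $\theta^2$'' below $\xi_*$ fails, and the matching to a WKB representation at $\xi_*$ would have to absorb an $O(\theta^{-1/\alpha})$ phase discrepancy. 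Term-by-term Abel regularization of the higher Born integrals does not rescue this: the $n$-th iterate has amplitude $\sim\theta^n\xi^{n(\alpha+1)}$, and the resulting series of regularized integrals is at best asymptotic, with no control of the true remainder.

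The cure is precisely the device the paper adopts: the secular terms must be resummed \emph{before} any iteration, by writing the Volterra equation with respect to the WKB pair $v^{>}=c^{1/2}e^{-i\phi}$, $v^{<}=c^{1/2}e^{i\phi}$ rather than $e^{\mp i\xi}$. The kernel then involves $M=c^{1/2}\left(c^{1/2}\right)''$ (equation (\ref{def:M})) instead of $\theta\xi_+^\alpha$; $M$ is integrable at $+\infty$ with $\int_{x_0}^\infty|M|=O(\theta^{\min(1,1/\alpha)})$ (Lemma \ref{bound-Mx0}), so the iteration is a contraction on the whole half-line $[x_0,\infty)$ (Proposition \ref{teo:Vseries}) and no splitting at $\xi_*$ is needed; the scale $\theta^{-1/\alpha}$ enters only through the size of $\int_{x_0}^\infty |M|$, which is what ultimately produces the exponent $\min(1/\alpha,1)$ in the remainder. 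Two costs follow, and they explain the remaining architecture of the paper's proof: since $M\sim\theta x^{\alpha-2}$ is not integrable at $0$ when $\alpha<1$, one must work on $[x_0,\infty)$ with $x_0>0$ fixed and transfer the radiation condition from $x_0$ back to $0$ by a separate bounded-kernel Cauchy/Volterra argument (Theorem \ref{teo:behavior-at-infinity} and Proposition \ref{Cauchy-0}); and the leading term is no longer your one-line integral $J$ but must be extracted from $\int_{x_0}^\infty M\,e^{-2i\phi}$ by repeated integration by parts (the operator $\mathcal{L}$ of Section \ref{proof:Rx_0-principal}), which is where the Gamma identity (\ref{eq:Gamma}) finally reappears.
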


The asymptotic regime of small $\theta$ corresponds to the following situation:
\begin{itemize}
\item High frequency ($\omega$ large compared to $c_0 / \ell$); or
\item Weak reflector ($\ell^{-\alpha}$ small); and in both cases
\item Non-grazing incidence ($\eta < 1$).
\end{itemize}

As we explain in the sequel, this result extends to the case when $\left( x_1 / \ell \right)_{+}^\alpha$ is replaced by a function $\lambda \left( \left( x_1 / \ell \right)_{+}^\alpha \right)$ where $\lambda(x)$ is smooth and equal to $x$ in a neighborhood of the origin.

Our construction assumes that the fixed-$\omega$ and fixed-$\eta$ incident wave comes with a unit coefficient, and details the coefficient $R$ of the corresponding reflected wave. If however the incident wave is a superposition of such modes for different $\omega$ and $\eta$ in $x_1 < 0$, such as
\[
u_i(t; x_1, x_2. x_3) = \frac{1}{(2\pi)^3} \int \!\!\! \int e^{i \omega \left( t - \frac{x_1}{c_0} \sqrt{1 - \eta^2} \right) } e^{i \frac{\omega}{c_0} (\eta_2 x_2 + \eta_3 x_3)} \hat{u}_{i}(\omega; 0, \eta_2, \eta_3)  \, d\eta_2 d\eta_3 d\omega,
\]
then the coefficient $R$ acts as a pseudodifferential operator in the expression of $u_r$ in $x_1 < 0$:
\[
u_r(t; x_1, x_2. x_3) = \frac{1}{(2\pi)^3} \int \!\!\! \int e^{i \omega \left( t + \frac{x_1}{c_0} \sqrt{1 - \eta^2} \right) } e^{i \frac{\omega}{c_0} (\eta_2 x_2 + \eta_3 x_3)} R(\theta(\omega, \eta)) \hat{u}_{i}(\omega; 0, \eta_2, \eta_3)  \, d\eta_2 d\eta_3 d\omega.
\]

\subsection{Standard cases: jump interface and ramp interface}

For reference, the expressions of $R$ in the case $\alpha = 0$ (jump discontinuity) and $\alpha = 1$ (ramp) are known explicitly:
\begin{itemize}
\item When $\alpha = 0$,
\be R=\frac{\sqrt{(\frac{c_-}{c_+})^2-\eta_2^2-\eta_3^2}-\sqrt{1-\eta_2^2-\eta_3^2}}{\sqrt{(\frac{c_-}{c_+})^2-\eta_2^2-\eta_3^2}+\sqrt{1-\eta_2^2-\eta_3^2}}
\label{reflection-sharp}\ee
There is no $\theta$ parameter in the case $\alpha = 0$, hence this expression cannot conform to the setup of our main result when $\alpha \rightarrow 0$.
\item When $\alpha = 1$, introduce $\beta=\theta^{-\frac23}$, and $w_{\pm}(X)= Ai(e^{\pm i\frac{\pi}{3}}X)$. The exact expression of $R$ is known
$$R=\frac{w'_+(\beta)+i\beta^{\frac12}w_+(\beta)}{-w'_+(\beta)+i\beta^{\frac12}w_+(\beta)},$$
and its asymptotic expansion in $\theta$, thanks to classical asymptotics of the Airy function,  is 

$$R=\frac{i}{8}\theta+O(\theta^2)=\frac{1}{(2i)^3}\theta+O(\theta^2),$$
which matches the general expression (See Proposition \ref{plusinfini} and Lemma \ref{estimation-layer}).\end{itemize}

\subsection{Related work}

Our construction similar to the decomposition into incoming and outgoing waves introduced by Jost in \cite{Jost}, and also mentioned in \cite{JostPais}. They are sometimes called {\it irregular or Jost solutions}, and for the radial Schr\"odinger equation $-\psi''+V(r)\psi= k^2\psi$, are $f_{\pm}= e^{\pm i k r}+o(1)$ as $r\rightarrow +\infty$. They serve as a base of decomposition of all solutions of the radial Schr\"odinger equation. Indeed, with the words {\it einlaufende kugelwelle} (entering spherical wave ) $f(k,r) \simeq e^{-ik.r}$ ((4) \cite{Jost}), and the remark that $f(k,.)$ and $f(-k, .)$ form a fundamental system for the ODE. The name ``outgoing spherical wave" appears in the companion paper of Jost and Pais \cite{JostPais}.

We use extensively the notion of limiting absorption principle, or limiting amplitude principle, for the definition of the outgoing and incoming waves. This notion was introduced by A. G. Sveshnikov in \cite{S1950}, who considers a wave number $k_1$ such that $k_1^2=k^2+i\epsilon$.  Sveshnikov himself refers to a paper of Ignatovski \cite{I}; in this paper of 1905, the complex perturbation was not arbitrary but was given by physical absorption. Complexification of the wave number of course also underlies the ``$i \epsilon$" prescription of the propagators for the Helmholtz and Schr\"{o}dinger equations \cite{Folland}.

%
%
%
%

As for discontinuities of the velocity of the wave, the case of the ramp is addressed by L.M. Brekhovskikh and O.A. Godin \cite{Godin} (see Chap. 3, expression (3.5.1) for example). Nevertheless, they treat only acoustic wave equations in the case where one has a representation in terms of special functions, and no nonsmooth profile is otherwise considered.

In \cite{WA1, WA2}, K. Wapenaar quantifies the reflection coefficient for the so-called self-similar profile, when $c(x) = c_1 (-x)^\alpha$ for $x<0$ and $c(x)=c_2 x^\alpha$ for $x > 0$. His analysis relies on a self-similar change of variables -- which is not directly useful in the context of this paper -- and an asymptotic study of the transfer matrix linking the solution and its derivative across the interface.

More recently, K. Wapenaar \cite{WA} derives a decomposition of the system defining waves in which there is a preferred direction of propagation in a set of coupled equations for waves propagation in the opposite directions along this preferred axis and derive the associated Green's functions.

As we put the final touches to this paper, we learned of the work
of O. Gannot and J. Wunsch  \cite{Wu} which establishes a general result, for $\alpha>0$, on the
propagation of singularities theorem for the Schr\"{o}dinger equation with a
fractional jump of the potential on a conormal surface, with regularity
improvements for $\alpha>1$. They deduce an estimate on the reflection
coefficient which is close to ours, in the particular case of the
potential $x_+^{\alpha}-1$ for $x\in (-\infty, x_0)$ in a one-dimensional setup.
Their analysis relies on a microlocal study of the bicharacteristics
near the jump, and does not deal with the behavior at infinity -- which differs in wave vs Schr\"{o}dinger equations and is
one of the key points of our study. Their analysis also does not seem to immediately give a precise estimate of the reflection coefficient in the multi-d set-up.




\section{Definitions and setup}

\subsection{Nondimensionalization}

We assume constant density, acoustic waves propagating in a heterogeneous wave speed in 3 space dimensions (\ref{wave-eq}),
$$\left( \frac{1}{c^2(\mathbf{x})} \pd_{t^2} - \Delta_{\mathbf{x}} \right) u = 0, \qquad \mathbf{x} = (x_1,x_2,x_3) \in \R^3.
$$
The model we consider for the wave speed is the fractional ramp (\ref{eq:c_lambda}).

For completeness, we also consider the more general inhomogeneous form 
\be\label{eq:c_lambdaofx}
c^{-2}(\mathbf{x}) = c_0^{-2} \left[ 1 + \lambda \left( \left( \frac{x_1}{\ell} \right)_{+}^\alpha \right) \right],
\ee
where $\lambda(x) = \int_0^x \chi(y) dy$, for some positive, compactly supported, $\chi \in C^\infty(\R)$ equal to $1$ in a neighborhood of $x = 0$. The proof assumes the case $\lambda(x) = x$, and treats the modification in the appendix.



Assume a harmonic time dependence of the form $e^{i \omega t}$, i.e., take a Fourier transform in $t$. After taking another partial Fourier transform in the transverse coordinates $(x_2,x_3)$, and introducing $\eta= c_0 \omega^{-1}(k_2^2+k_3^2)^{\frac12}$, the wave equation becomes
\[
\left( \pd_{x_1}^2 + \omega^2c_0^{-2} \left[ 1 + \left( \frac{x_1}{\ell} \right)_{+}^\alpha - \eta^2 \right] \right) \hat{u} = 0,
\]
with boundedness conditions at $x_1 = \pm \infty$. The continuity of $\hat{u}$ and $\pd_{x_1} \hat{u}$ at $x_1 = 0$ owes to the boundedness of the wave speed $c(\mathbf{x})$. 

The equation is further non-dimensionalized by letting
\[
x_1 = \eps x, \qquad \eps = \frac{c_0}{\omega} (1 - \eta^2)^{-1/2}.
\]
We can understand $\eps$ as the horizontal wavelength (divided by $2 \pi$), when $x_1$ is in a horizontal direction.

Freeing up $u$ to let $u(x) := \hat{u}(x_1)$, and freeing up $c$ to let 
\be\label{c-fractional}
c^{-2}(x) := 1 + \theta \, x_{+}^\alpha,
\ee
with
\beq\label{eq:theta}
\theta = \left( \frac{c_0}{\ell \omega} \right)^{\alpha}  (1 - \eta^2)^{- \frac{\alpha + 2}{2}},
\eeq
we obtain the model equation
\be\label{eq:model-problem}
\left( \pd^2_x + c^{-2}(x) \right) \, u = 0.
\ee

Note that the cosine of the incidence angle is $\cos \varphi_i = (1 - \eta^2)^{1/2}$. We can therefore rewrite $\theta$ as
\[
\theta = \left( \frac{\eps}{\ell} \right)^{\alpha} \frac{1}{(\cos \varphi_i)^{\alpha + 2}}.
\]
This form of $\theta$ reveals the importance of the non-dimensional ratio $\eps / \ell$ of the horizontal wavelength by the skin depth. We see that regimes of small $\theta$ correspond to either
\begin{itemize}
\item weak strength of the fractional reflector (large $\ell$); and/or
\item high frequencies (small $\eps)$.
\end{itemize}
Furthermore, this regime is only possible for non-grazing waves ($\cos \varphi_i \ne 0$).

The physical dimensions are gathered in the following table, where $m$ is meter, $s$ is second, and $1$ is dimensionless.

\begin{center}
\begin{tabular}{| l | l |}
\hline
$x_j, \eps, \ell, k_j^{-1}$ & $m$  \\ 
\hline
$t, \omega^{-1}$ & $s$ \\
\hline
$c_0$ & $ms^{-1}$ \\ 
\hline
$x, c, \theta, \eta$ & $1$ \\
\hline
\end{tabular}
\end{center}

Our objective is now to define and estimate the reflection coefficient for (\ref{eq:model-problem}), in terms of $\theta$.

\subsection{WKB approximate solutions}

An important analytical tool in this paper is the WKB construction of approximate solutions,
\be\label{model-solution}
v^>(x) = b(x) e^{- i \phi(x)}, \qquad v^<(x) = b(x) e^{i \phi(x)},
\ee

with 
\[
\phi(x) = \int_0^x \frac{1}{c(y)} \, dy, \qquad b(x) = c^{\frac{1}{2}}(x).
\]
This choice coincides with the first term of the usual WKB expansion; writing the next terms would require higher degrees of differentiability of $c(x)$. In the sequel, we consider $\alpha>0$ in the fractional ramp problem using $c \in C^0(\R) \cap  C^{2}(]0, +\infty[)$, on $[x_0, +\infty[$, $x_0>0$ to be chosen later.

An important property of the phase $\phi$ is that $\lim_{x \to \pm \infty} \phi(x) = \pm \infty$ with $\phi' > 0$. The identity $\phi'' + 2 (c^{1/2})' \phi' = 0$ is also useful. The two functions $v^>$ and $v^<$ are not solutions of (\ref{eq:model-problem}), but they solve the modified equation
\be\label{eq:WKB}
v'' + c^{-2} v = c^{- \frac{1}{2}} \left( c^{\frac{1}{2}} \right)''  v.
\ee


The functions $v^>$ and $v^<$ have the interpretation of right-going and left-going waves, respectively. This property is apparent when restoring the time dependence $e^{i \omega t}$. This interpretation can also be formalized by considering the extended equation 
\be\label{eq:WKB-ext}
\left( \pd_x^2 + (1 + i \sigma)^2 c^{-2} \right) \, v(x ; \sigma) = c^{- \frac{1}{2}} \left( c^{\frac{1}{2}} \right)''  v(x;\sigma),
\ee
with a small parameter $\sigma \in \R$. The solutions are now 
\be\label{model-solution-sigma}
v^>(x;\sigma) = b(x) e^{- i (1+ i \sigma) \phi(x)}, \qquad v^<(x;\sigma) = b(x) e^{i (1+ i \sigma)\phi(x)}.
\ee
The left-going or right-going character, or ``polarization", is now tied to the direction in which exponential decay occurs as a function of the sign of $\sigma$. In case $\pm \sigma > 0$, we have
\[
\lim_{x \to \mp \infty} v^>(x;\sigma) = 0, \qquad \lim_{x \to \pm \infty} v^<(x;\sigma) = 0.
\]
These expressions are used in the next section, as the basis for the definition of polarized waves for the model equation (\ref{eq:model-problem}).

For future convenience we introduce the notation
\be\label{def:M}
M(x) = c^{\frac{1}{2}} \left( c^{\frac{1}{2}} \right)'' = \left( (v^{>})''+ c^{-2} v^{>} \right) \, v^{<},
\ee
so (\ref{eq:WKB}) can alternatively be written as
\[
v'' + c^{-2} v = M c^{-1}  v,
\]
and similarly for (\ref{eq:WKB-ext}),
\be\label{eq:WKB-ext2}
\left( \partial^2_{x} + (1+ i \sigma)^2 c^{-2}(x) \right) v(x;\sigma) - M c^{-1} v(x;\sigma) = 0.
\ee
Many properties of the function $M(x)$ valid for all $\alpha > 0$ are listed and proven in Appendix \ref{sec:appendixB}, including the fact that it is $O(\theta)$, and that it is locally integrable at $x = + \infty$.

Also note in passing that $c^{-1} v^{>} v^{<} = 1$.

Finally, we also introduce the zero-th order WKB approximations
\begin{equation}\label{eq:w}
w^{>}(x) = e^{-i \phi(x)}, \qquad w^{<}(x) = e^{i \phi(x)}.
\end{equation}

\subsection{Incoming and outgoing waves}

The notion of reflection coefficient involves a comparison of waves polarized as incoming and outgoing. As in the previous section, we introduce a small parameter $\sigma \in \R$ and consider the extended problem
\be\label{eq:model-sigma}
\left( \partial^2_{x} + (1+ i \sigma)^2 c^{-2}(x) \right) u(x;\sigma)=0.
\ee

In contrast to the WKB functions $v^{>}$ and $v^{<}$, we now have four polarized solutions. Each is defined up to a multiplicative constant.

\bigskip
\begin{definition}\label{def:polarized-waves}
A nonzero solution $u$ of (\ref{eq:model-problem}) is said to be
\begin{itemize}
\item outgoing to $+\infty$, if there exists a sequence of solutions $u(x;\sigma)$ of (\ref{eq:model-sigma}) with $\sigma < 0$ such that\footnote{The $\sigma \to 0$ limits are all understood to converge uniformly over compact sets of $x \in \R$.}
\[
\lim_{\sigma \to 0^{-}} u(x;\sigma) = u(x), \qquad \lim_{x \to +\infty} u(x; \sigma) = 0.
\]
The space of such solutions is denoted by ${\mathcal U}^{>}_{+\infty}$.
\item incoming from $+\infty$, if there exists a sequence of solutions $u(x;\sigma)$ of (\ref{eq:model-sigma}) with $\sigma > 0$ such that
\[
\lim_{\sigma \to 0^{+}} u(x;\sigma) = u(x), \qquad \lim_{x \to +\infty} u(x; \sigma) = 0.
\]
The space of such solutions is denoted by ${\mathcal U}^{<}_{+\infty}$.
\item outgoing to $-\infty$, if there exists a sequence of solutions $u(x;\sigma)$ of (\ref{eq:model-sigma}) with $\sigma < 0$ such that
\[
\lim_{\sigma \to 0^{+}} u(x;\sigma) = u(x), \qquad \lim_{x \to -\infty} u(x; \sigma) = 0.
\]
The space of such solutions is denoted by ${\mathcal U}^{<}_{-\infty}$.
\item incoming from $-\infty$, if there exists a sequence of solutions $u(x;\sigma)$ of (\ref{eq:model-sigma}) with $\sigma > 0$ such that
\[
\lim_{\sigma \to 0^{-}} u(x;\sigma) = u(x), \qquad \lim_{x \to -\infty} u(x; \sigma) = 0.
\]
The space of such solutions is denoted by ${\mathcal U}^{>}_{-\infty}$.
\end{itemize}

\end{definition}

We interpret any element $u \in {\mathcal U}^{>}_{+\infty}$ as a transmitted wave, $u\in {\mathcal U}^{<}_{-\infty}$ as a reflected wave, and $u\in {\mathcal U}^{>}_{-\infty}$ as an incident wave (from the left, as is the case throughout the paper).  Using a classical result on asymptotic behavior of systems of ODEs, we have the
\begin{proposition}\label{teo:coddington}

\begin{itemize}
\item[(i)] Each of the four subspaces ${\mathcal U}^{>}_{\pm \infty}$, ${\mathcal U}^{<}_{\pm \infty}$ has dimension 1.  
\item[(ii)] Any pair $(u_1, u_2)$ of nonzero solutions with $u_1 \in {\mathcal U}^{>}_{- \infty}$ and $u_2 \in {\mathcal U}^{<}_{- \infty}$; or $u_1 \in {\mathcal U}^{>}_{+ \infty}$ and $u_2 \in {\mathcal U}^{<}_{+ \infty}$, forms a fundamental system for (\ref{eq:model-problem}). 
\item[(iii)] Any nonzero element $u \in {\mathcal U}^{>}_{\pm \infty}$ admits a nonzero, finite limit $\lim_{x \to \pm \infty} \frac{u(x)}{v^{>}(x)}$. Correspondingly, any nonzero element $u \in {\mathcal U}^{<}_{\pm \infty}$ admits a nonzero, finite limit $\lim_{x \to \pm \infty} \frac{u(x)}{v^{<}(x)}$.
\item[(iv)] $\lim_{x \to \infty} u(x;\sigma) = 0$ if and only if $\| u(\cdot; \sigma) \|_\infty < \infty$.
\end{itemize}
\end{proposition}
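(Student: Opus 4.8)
The plan is to reduce all four claims to a single asymptotic-integration statement at $x=+\infty$, since the behaviour at $-\infty$ is explicit. Indeed, for $x<0$ we have $x_+^\alpha=0$, hence $c\equiv 1$ and, by (\ref{def:M}), $M\equiv 0$ there, so (\ref{eq:model-sigma}) reduces to the constant-coefficient equation $u''+(1+i\sigma)^2u=0$ with exact solutions $e^{\mp i(1+i\sigma)x}$; these are precisely $v^{>},v^{<}$ restricted to $(-\infty,0)$. For $\sigma$ of a fixed sign exactly one of them decays as $x\to-\infty$ while the other blows up, so the decay condition in Definition \ref{def:polarized-waves} singles out a one-dimensional space, and the $\sigma\to0$ limit is the corresponding pure exponential. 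This already yields the $-\infty$ halves of (i) and (iii), with $u$ literally proportional to $v^{>}$ (resp.\ $v^{<}$) on $x<0$.

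The substantive work is at $+\infty$, where I would compare (\ref{eq:model-sigma}) with the WKB functions $v^{>},v^{<}$ of (\ref{model-solution-sigma}): by (\ref{eq:WKB-ext2}) they solve the same equation up to the error $Mc^{-1}$, and their Wronskian is the nonzero constant $2i(1+i\sigma)$. Treating $Mc^{-1}$ as a perturbation, I would recast any solution as the fixed point of the Volterra equation, anchored at $+\infty$,
\[
u(x)=v^{\gtrless}(x)+\frac{1}{2i(1+i\sigma)}\int_x^{\infty}\big(v^{>}(x)v^{<}(y)-v^{<}(x)v^{>}(y)\big)\,M(y)c^{-1}(y)\,u(y)\,dy .
\]
The kernel is bounded by a fixed multiple of the WKB amplitudes, and Appendix \ref{sec:appendixB} provides $M=O(\theta)$ together with integrability of $Mc^{-1}$ near $+\infty$; a standard successive-approximation (contraction) argument then produces a fundamental system $U^{>},U^{<}$ of (\ref{eq:model-sigma}) with $U^{\gtrless}/v^{\gtrless}\to1$ and $(U^{\gtrless})'/(v^{\gtrless})'\to1$ as $x\to+\infty$, uniformly for $\sigma$ in a one-sided neighbourhood of $0$. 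Equivalently, one verifies the hypotheses of the classical Levinson (Coddington--Levinson) asymptotic-integration theorem for the diagonalised first-order system, the off-diagonal remainder being exactly the integrable term of Appendix \ref{sec:appendixB}. I expect this to be the \emph{main obstacle}: at $\sigma=0$ the two exponents are purely imaginary and the amplitudes decay only algebraically (since $c^{1/2}\to0$), so the exponential dichotomy is borderline; one must simultaneously exploit the $L^1$ smallness of the perturbation and control the $\sigma$-dependence well enough to pass to the $\sigma\to0^{\pm}$ limits uniformly on compacts, as required by Definition \ref{def:polarized-waves}.

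Granting this, the rest follows mechanically. For $\sigma<0$ one has $|v^{>}|=c^{1/2}e^{\sigma\phi}\to0$ while $|v^{<}|=c^{1/2}e^{|\sigma|\phi}\to+\infty$, so among $\{U^{>},U^{<}\}$ the decaying solutions at $+\infty$ are exactly the one-dimensional span of $U^{>}$; letting $\sigma\to0^{-}$ gives a single limit solution up to scale, namely $\mathcal{U}^{>}_{+\infty}$, and the construction supplies the nonzero finite limit $\lim_{x\to+\infty}u/v^{>}$ of (iii). Taking $\sigma>0$ interchanges the roles of $v^{>}$ and $v^{<}$ and produces $\mathcal{U}^{<}_{+\infty}$, completing (i) and (iii). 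For (ii) it suffices to check nonvanishing of the constant Wronskian: for the $-\infty$ pair the solutions are proportional to $e^{-ix}$ and $e^{ix}$ on $x<0$, with Wronskian $2i\neq0$, while for the $+\infty$ pair the $C^{1}$ asymptotics $U^{\gtrless}\sim v^{\gtrless}$ give $W(u_1,u_2)=\lim_{x\to+\infty}\big(v^{>}(v^{<})'-(v^{>})'v^{<}\big)=2i\neq0$. Finally, (iv) is the decay/growth dichotomy at $+\infty$: writing $u=aU^{>}+bU^{<}$, the factor $U^{<}$ blows up and $U^{>}\to0$, so $u$ is bounded near $+\infty$ if and only if $b=0$, which is precisely the condition $\lim_{x\to+\infty}u=0$.
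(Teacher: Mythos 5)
Your overall strategy is the same as the paper's. The paper substitutes $y=\phi(x)$, $U=c^{-1/2}u$, which turns (\ref{eq:model-sigma}) into a constant-coefficient system perturbed by $\epsilon(y)=c\,M$ with $\int|\epsilon|\,dy=\int|M|\,dx<\infty$, and then cites Theorem 8.1 of Chapter 3 of Coddington--Levinson to produce the two solutions with prescribed exponential behaviour at $+\infty$; your direct comparison with $v^{>},v^{<}$ together with the $L^1$ property of $M$ from Appendix \ref{sec:appendixB} is the same mechanism carried out in the original variable, and your explicit treatment of $-\infty$ (where $c\equiv 1$ and the solutions are exact exponentials) is correct and consistent with (\ref{eq:expi}).

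There is, however, a genuine gap in your self-contained construction at $+\infty$: the single Volterra equation you display, with \emph{both} kernel terms integrated over $(x,\infty)$, can only define the recessive solution, not a fundamental system. Take $\sigma<0$ and try to produce $U^{<}$: for $u$ comparable to $v^{<}$, the first kernel term behaves like $v^{>}(x)\,(v^{<}(y))^2c^{-1}(y)M(y)$, whose modulus is $b(x)e^{\sigma\phi(x)}\,e^{-2\sigma\phi(y)}|M(y)|$; since $e^{-2\sigma\phi(y)}$ grows exponentially in $y$ while $M$ decays only algebraically, the integral $\int_x^{\infty}$ diverges, so the fixed-point map is not even well defined on functions comparable to $v^{<}$ (symmetrically, the equation fails for $U^{>}$ when $\sigma>0$). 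This hole propagates: your proofs that the dimension in (i) is \emph{exactly} one, and your arguments for (ii) and (iv), all invoke the existence and exponential growth of the dominant solution. The standard repair is precisely what distinguishes Levinson's theorem from a naive iteration anchored wholly at infinity: the projection onto the dominant mode must be integrated from a finite anchor $x_1$ and only the projection onto the recessive mode from $+\infty$; alternatively, construct only the recessive solution by your equation and obtain the growing one by reduction of order, $U^{>}(x)\int_{x_1}^{x}(U^{>}(y))^{-2}dy$, reading off its growth. So your parenthetical appeal to the Coddington--Levinson theorem is not an ``equivalent'' reformulation of your displayed equation; it is the necessary correction, and once it is made your argument coincides with the paper's (modulo the Liouville change of variables, which the paper uses only to make the unperturbed system literally constant-coefficient so that the hypotheses of that theorem are immediate, uniformly down to $\sigma=0$).
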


We leave the proof of this result in Appendix \ref{sec:appendixA}.

This proposition provides a natural way of normalizing the polarized solutions: 
\begin{itemize}
\item we let $u^{>}_{\pm \infty}(x)$ for the element $u$ of ${\mathcal U}^{>}_{\pm \infty}$ such that $\lim_{x \to \pm \infty} \frac{u(x)}{v^{>}(x)} = 1$; and
\item we let $u^{<}_{\pm \infty}(x)$ for the element $u$ of ${\mathcal U}^{<}_{\pm \infty}$ such that $\lim_{x \to \pm \infty} \frac{u(x)}{v^{<}(x)} = 1$.
\end{itemize}


In the familiar case of a plane wave, when $\theta = 0$, we have $u^{>}_{-\infty}(x) = u^{>}_{+\infty}(x) = e^{-ix}$ and $u^{<}_{-\infty}(x) = u^{<}_{+\infty}(x) = e^{ix}$, but in a heterogeneous medium the four polarized waves are in general distinct. In the fractional ramp example, where $c^{-2}(x) = 1 + \theta x_{+}^\alpha$, or in any case where the medium is homogeneous uniform in $x<0$, we still have\begin{equation}\label{eq:expi}
u^{>}_{- \infty}(x) = v^{>}(x) = e^{- ix}, \qquad u^{<}_{- \infty}(x) = v^{<}(x) = e^{ ix}, \qquad x < 0.
\end{equation}
For illustration, in the case $\alpha = 1$, it is shown in Section \ref{sec-ramp} that 
\[
u^{>}_{+\infty}(x) = \overline{u^{<}_{+\infty}(x)} = 2 \pi^{1/2} e^{i \frac{\pi}{12}} \theta^{-\frac{1}{6}} Ai(e^{i\frac{\pi}{3}}\theta^{\frac13}(x+\theta^{-1})), \qquad x > 0.
\]

The transmitted wave is then expanded as
\begin{equation}
\label{coeffAB}
u^{>}_{+ \infty} = A \, u^{<}_{-\infty} + B \, u^{>}_{-\infty},
\end{equation}
and the reflection coefficient defined as $R = A/B$. Notice that the value of $R$ does not depend on the choice of normalization of $u^{>}_{+ \infty}$. The transmission coefficient is $1/B$, and does depend on this choice.

It is easy to see from (\ref{eq:expi}), and from continuity of $u$ and $u'$ near $x=0$, that for $u \in {\mathcal U}^{>}_{+ \infty}$, the reflection coefficient can be determined from
\begin{equation}\label{eq:qR}
\frac{u'(0)}{u(0)} = i\frac{R-1}{R+1}.
\end{equation}
This relation is key to computing $R$ in the sequel. The case $R = -1$ corresponds to $u(0) = 0$, but we will see in the sequel that an assumption of small $\theta$ prevents this scenario. The quantity $\frac{R-1}{R+1}$ can be seen as a nondimensionalized impedance.

We can now remind the reader of our main result (Theorem \ref{teo:main} in the introduction).

\begin{theorem}
For all $\alpha > 0$, as $\theta \to 0$,
\[
R = \frac{\Gamma(\alpha+1)}{(2i)^{\alpha+2}} \, \theta  + O \left( \theta^{1 + \min \left( \frac{1}{\alpha}, 1 \right) } \right).
\]
\end{theorem}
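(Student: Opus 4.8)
The plan is to reduce the computation of $R$ to a single convergent oscillatory integral via a Volterra (variation-of-parameters) representation of the transmitted wave, and then to extract its $\theta \to 0$ behaviour by rescaling to the intrinsic ``turning'' length $\theta^{-1/\alpha}$.

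\emph{Step 1 (Volterra representation; reduction to connection coefficients).} Since any $u \in {\mathcal U}^{>}_{+\infty}$ fixes $R$ through the decomposition \eqref{coeffAB}, I would first represent the normalized transmitted wave against the WKB pair $v^>, v^<$. These solve the modified equation \eqref{eq:WKB}, have constant Wronskian $W[v^>,v^<] = 2i$ (using $\phi' = c^{-1}$, $b^2 = c$), and the exact solution obeys $u'' + (c^{-2} - M c^{-1}) u = - M c^{-1} u$ with $M$ from \eqref{def:M}. Selecting the branch outgoing at $+\infty$ and normalized by $u/v^> \to 1$ (Proposition \ref{teo:coddington}(iii)) gives
\[
u(x) = v^>(x) - \frac{v^>(x)}{2i} \int_x^\infty v^<(s) M(s) c^{-1}(s) u(s)\, ds + \frac{v^<(x)}{2i} \int_x^\infty v^>(s) M(s) c^{-1}(s) u(s)\, ds .
\]
Convergence at $+\infty$ follows from Appendix \ref{sec:appendixB} ($M = O(\theta)$, locally integrable at $+\infty$) together with $c^{-1} v^> v^< = 1$ and $c^{-1} (v^>)^2 = e^{-2i\phi}$ bounded, the limiting-absorption $\sigma$-regularization selecting this branch and taming the oscillatory tail. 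Since $M \equiv 0$ for $x \le 0$, where $v^> = e^{-ix}$, $v^< = e^{ix}$, evaluating at $x<0$ reads off
\[
A = \frac{1}{2i} \int_0^\infty v^> M c^{-1} u\, ds, \qquad B = 1 - \frac{1}{2i} \int_0^\infty v^< M c^{-1} u\, ds, \qquad R = \frac AB .
\]
Evaluating on $x<0$ is deliberate: it sidesteps the singular behaviour of $b' = (c^{1/2})'$ at $x = 0^+$ that would otherwise enter $u'(0)$ through \eqref{eq:qR}.

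\emph{Step 2 (Neumann iteration; leading order).} Because $M = O(\theta)$, the Volterra operator is a contraction for small $\theta$, so $u = v^> + O(\theta)$. Using $v^> c^{-1} v^> = e^{-2i\phi}$ and $v^< c^{-1} v^> = 1$,
\[
A = \frac{1}{2i}\int_0^\infty M(s) e^{-2i\phi(s)}\, ds + O(\theta^2), \qquad B = 1 - \frac{1}{2i}\int_0^\infty M(s)\, ds + O(\theta^2).
\]
The quantitative input is $\int_0^\infty M = O\!\left(\theta^{\min(1/\alpha,1)}\right)$ (seen from the rescaling below, where $\int_0^\infty M\,dx = \theta^{1/\alpha}\int_0^\infty \widetilde M$). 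Hence $B = 1 + O\!\left(\theta^{\min(1/\alpha,1)}\right)$ and
\[
R = \frac{1}{2i}\int_0^\infty M(s) e^{-2i\phi(s)}\, ds + O\!\left(\theta^{1 + \min(1/\alpha,1)}\right),
\]
so the theorem reduces to the asymptotics of this one integral.

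\emph{Step 3 (Rescaling and endpoint asymptotics).} I would pass to the intrinsic variable $x = \theta^{-1/\alpha}\xi$, under which $c^{-2} = 1 + \xi^\alpha$ is $\theta$-free: $M(x) = \theta^{2/\alpha}\widetilde M(\xi)$ and $\phi(x) = \theta^{-1/\alpha}\Phi(\xi)$ with $\Phi(\xi) = \int_0^\xi \sqrt{1+\zeta^\alpha}\,d\zeta$, both independent of $\theta$, so that
\[
\frac{1}{2i}\int_0^\infty M e^{-2i\phi}\, dx = \frac{\theta^{1/\alpha}}{2i}\int_0^\infty \widetilde M(\xi)\, e^{-2i\theta^{-1/\alpha}\Phi(\xi)}\, d\xi .
\]
As $\Phi' = \sqrt{1+\xi^\alpha} > 0$ has no stationary point, the large-parameter limit ($\theta^{-1/\alpha} \to \infty$) is governed by the endpoint $\xi = 0$, where $\widetilde M(\xi) \sim -\frac{\alpha(\alpha-1)}{4}\xi^{\alpha-2}$ and $\Phi(\xi)\sim \xi$. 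Using $\int_0^\infty \xi^{\alpha-2} e^{-i\lambda\xi}\,d\xi = \Gamma(\alpha-1)(i\lambda)^{-(\alpha-1)}$ with $\lambda = 2\theta^{-1/\alpha}$, the identity $\alpha(\alpha-1)\Gamma(\alpha-1) = \Gamma(\alpha+1)$, and $i^4 = 1$ to collapse the powers of $2i$, the $\theta^{1/\alpha}$ prefactor and the $\theta^{(\alpha-1)/\alpha}$ from the integral combine to $\theta^1$ and yield precisely $\frac{\Gamma(\alpha+1)}{(2i)^{\alpha+2}}\theta$; the next endpoint contributions (from $\xi^{2\alpha-2}$ in $\widetilde M$ and from $\Phi - \xi = O(\xi^{\alpha+1})$) are $O(\theta^2)$.

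\emph{Step 4 (Main obstacle and error bookkeeping).} The hard part is the regime $0 < \alpha \le 1$, where $\widetilde M \sim \xi^{\alpha-2}$ is not integrable at $\xi=0$: the integrals defining $A$ and $B$ then diverge at the origin individually even though $R$ is finite. I would resolve this exactly as the setup anticipates, splitting $[0,\infty) = [0,x_0] \cup [x_0,\infty)$ with WKB valid on $[x_0,\infty)$ (where $c \in C^2$) and performing one integration by parts that transfers a derivative off $(c^{1/2})''$ (turning $\xi^{\alpha-2}$ into the integrable $\xi^{\alpha-1}$, with boundary terms cancelling the near-field contribution), keeping only the convergent combinations guaranteed by $u \in C^1$. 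The accounting then gives error $O(\theta^2)$ for $\alpha \le 1$, and $O(\theta^{1+1/\alpha})$ for $\alpha > 1$ (dominated by $B = 1 + O(\theta^{1/\alpha})$ multiplying $A = O(\theta)$), i.e. $O\!\left(\theta^{1+\min(1/\alpha,1)}\right)$. As a check I would specialize to $\alpha=1$, where the factor $\alpha(\alpha-1)$ vanishes and the next term must be retained; this reproduces the known Airy value $R = \frac{i}{8}\theta + O(\theta^2) = \frac{1}{(2i)^3}\theta + O(\theta^2)$ from Proposition \ref{plusinfini} and Lemma \ref{estimation-layer}, consistent with $\Gamma(2)/(2i)^3$.
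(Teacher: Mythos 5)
Your overall architecture---variation of parameters against the WKB pair $v^{>},v^{<}$, a Volterra/Neumann iteration made small by $M$, and extraction of the constant from the single oscillatory integral $\frac{1}{2i}\int_0^\infty M e^{-2i\phi}$ (your rescaling $x=\theta^{-1/\alpha}\xi$, which makes amplitude and phase $\theta$-free, is a clean way to see $\Gamma(\alpha+1)/(2i)^{\alpha+2}$)---is the same strategy as the paper's, and your leading-order computation is correct. The genuine gap is in Step 2, and it is not a technicality: it is exactly the point where the paper has to work hardest. Your claims ``$u = v^{>}+O(\theta)$'' and ``$A = \frac{1}{2i}\int_0^\infty M e^{-2i\phi}\,ds + O(\theta^2)$'' read ``$M=O(\theta)$'' as a statement about norms on the half-line, which is false there. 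What is true (Lemmas \ref{ineq:Mx0}, \ref{bound-Mx0}, \ref{l:A2}) is that $\int_{x_0}^\infty|M|=O(\theta^{1/\alpha})$ for $\alpha>1$, and that $\|M\|_{L^\infty([x_0,\infty))}$ is of order $\theta^{2/\alpha}$ (not $\theta$) once $\alpha>2$, the maximum being attained at the turning scale $x\sim\theta^{-1/\alpha}$. Hence the contraction parameter of your Volterra operator is $\theta^{1/\alpha}$, and the crude bound on the error made by truncating the Neumann series in $A$ is only $O\bigl((\int|M|)^2\bigr)=O(\theta^{2/\alpha})$. Since $2/\alpha<1+1/\alpha$ for every $\alpha>1$, this never yields the claimed remainder $O(\theta^{1+1/\alpha})$; and for $\alpha\geq 2$ it is not even $o(\theta)$, so it swallows the leading term and Step 2 proves nothing in that range. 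To close the argument one must show that the higher Neumann contributions $\int_0^\infty M\,s^{>}_{j}\,e^{-2i\phi}\,dx$, $j\geq 1$---products of $Me^{-2i\phi}$ with iterated integrals of $M$---gain extra powers of $\theta$ from their own oscillation, and one must control of order $[\alpha]+1$ of them. That is precisely the content of the paper's Lemma \ref{estimate-remainder2}, of the repeated-integration-by-parts machinery built on $\mathcal{L}(f)=(cf)'$ (Lemmas \ref{l:A1}, \ref{l:A2}, \ref{l:A3}), and of Proposition \ref{p:all-terms}; nothing in your proposal plays this role.

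Two secondary points. First, Step 3 invokes $\int_0^\infty \xi^{\alpha-2}e^{-i\lambda\xi}\,d\xi=\Gamma(\alpha-1)(i\lambda)^{-(\alpha-1)}$, which is a (semi-)convergent integral only for $1<\alpha<2$; for $\alpha\geq 2$ it must be read through the regularized identity (\ref{eq:Gamma}), obtained by repeated integration by parts, with a separate argument when $\alpha\in\N$ (there $\frac{d^{[\alpha]+2}}{dy^{[\alpha]+2}}(y^{\alpha})\equiv 0$ and the expansion degenerates), which is how the paper proceeds. Second, your assessment of where the difficulty lies is inverted: for $0<\alpha\leq 1$ one has $\int_{x_0}^\infty|M|=O(\theta)$, so the crude contraction bound does give the stated $O(\theta^{2})$ remainder in a few lines (Lemma \ref{estimate-remainder1}); the splitting at $x_0$ and the passage from $R_{x_0}$ back to $R$ via the Cauchy problem on $[0,x_0]$ (the paper's operator $\mathcal{V}$ of Proposition \ref{Cauchy-0}, built on $w^{\pm}=e^{\mp i\phi}$ precisely so that its kernel $\phi''/\phi'\sim\theta x^{\alpha-1}$ is integrable at $0$) is needed there but is comparatively routine. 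The regime your proposal treats as routine---large $\alpha$---is the one that forces the paper's heavy analysis.
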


\subsection{First steps and architecture of the proof}

Section \ref{sec:Volterra} contains preparatory material. The core of the proof is in Section \ref{sec-limit-R}. It establishes that the remainder is a $o(\theta)$. Section \ref{sec-limit-theta} establishes the more precise form quoted above for this remainder. Although the proof covers all cases $\alpha \geq 0$, Section \ref{sec-ramp} gives an explicit (exact, non-asymptotic) formula for $R$ that matches Theorem \ref{teo:main} in the special case $\alpha = 1$ (available from special function analysis).

The main idea of the proof is to express any nonzero solution that is outgoing to $+\infty$, denoted $u$ for brevity, and defined up to an unimportant nonzero multiplicative scalar, in terms of the WKB functions $v^{>}$ and $v^{<}$. This requires writing a Volterra equation, and solving it iteratively. This is performed on $[x_0, +\infty)$, for an arbitrary $x_0$, such that the Volterra equation has a contracting kernel when $\theta < \theta_0(x_0)$. As a consequence of the contractibility of this kernel, we obtain $u(x_0) \ne 0$. This argument provides a value of $q =\frac{u'(x_0)}{u(x_0)}$, which is then linked to $\frac{u'(0)}{u(0)}$ via a Cauchy problem on $(u(x), u'(x))$, for $x \in [0,x_0]$, with data at $x_0$ equal to $(1,q)$. The reflection coefficient is then deduced from (\ref{eq:qR}).

For $x \in [0,x_0]$, it is clear that this Cauchy problem is well-posed, but it is advantageous to express its solution in terms of $w^{>}(x):=e^{-i\phi(x)}$ and $w^{<}(x):=e^{i\phi(x)}$, via
\[
\begin{pmatrix} u(x) \\ u'(x) \end{pmatrix} = a^{>}(x) \begin{pmatrix} w^{>}(x) \\ (w^{>})'(x) \end{pmatrix} + a^{<}(x) \begin{pmatrix} w^{<}(x) \\ (w^{<})'(x) \end{pmatrix}
\]
(which makes use of $a^{>}(x)(w^{>})'((x)+a^{<}(x)w^{<})'(x)=0$). One then deduces $k$ such that $a^{<}(x_0)=k a^{>}(x_0)$ through the equality $q (1+ke^{2i\phi(x_0)})=i\phi'(x_0)(-1+ke^{2i\phi(x_0)})$.

For $x \in [x_0, \infty)$, it is preferable to express $u$ in terms of $v^{<}$ and $v^{>}$. Equation (\ref{eq:WKB-ext2}) is a homogeneous linear differential equation for which $v^{>}(\cdot; \sigma)$ and $v^{<}(\cdot; \sigma)$ is a generating pair of solutions. Since equation (\ref{eq:model-sigma}) can be seen as a inhomogeneous version of (\ref{eq:WKB-ext2}), where the right-hand side is $- M c^{-1} v$, any solution of  (\ref{eq:model-sigma}) can be sought as a combination of
$v^{>}(.;\sigma)$ and $v^{<}(.;\sigma)$ via Duhamel's principle (the method of
variation of parameters) as
\be\label{eq:Duhamel}
u(x;\sigma) = A_{\sigma}(x) \, v^{>}(x;\sigma) + B_{\sigma}(x) \, v^{<}(x;\sigma).
\ee

In a standard fashion, we additionally impose 
\be\label{eq:condDuhamel} 
(A_{\sigma}(x))' \, v^{>}(x;\sigma) + (B_{\sigma}(x))' \, v^{<}(x;\sigma)=0,\ee
so that
\be\label{eq:Duhamelprime}
u'(x;\sigma) = A_{\sigma}(x) \, (v^{>})'(x;\sigma) + B_{\sigma}(x) \, (v^{<})'(x;\sigma).
\ee
With the Wronskian equal to $2ib^2(x)c^{-1}(x)=2i$, equations (\ref{eq:Duhamel}) and (\ref{eq:Duhamelprime}) determine the coefficients $A_{\sigma}(x)$ and $B_{\sigma}(x)$ uniquely as
\be\label{coefficients}
\left\{
\begin{array}{ll}A_{\sigma}(x) =& \frac{1}{2i(1+i\sigma)}[u(x;\sigma)(v^{<})'(x;\sigma) -u'(x;\sigma)v^{<}(x;\sigma)]\cr
B_{\sigma}(x) =& \frac{1}{2i(1+i\sigma)}[-u(x;\sigma)(v^{>})'(x;\sigma) +u'(x;\sigma)v^{>}(x;\sigma)]
\end{array}
\right.
\ee

A closed system for $A_{\sigma}$ and $B_{\sigma}$ is obtained from (\ref{eq:condDuhamel}), and by substituting (\ref{eq:Duhamel}) and (\ref{eq:Duhamelprime}) into (\ref{eq:model-sigma}):
$$\bs\ba{l}(A_{\sigma})'(x)v^{>}(x;\sigma)+ (B_{\sigma})'(x)v^{<}(x;\sigma)=0\cr
(A_{\sigma})'(x)(v^{>})'(x;\sigma)+ (B_{\sigma})'x)(v^{<})'(x;\sigma)=-M(x)c^{-1}(x)(A_{\sigma}(x)v^{>}(x;\sigma)+ B_{\sigma}(x)v^{<}(x;\sigma))\ea\es$$
Using the fact that the Wronskian of the two basis functions is $2i(1+i\sigma)$, one obtains
$$\bs\ba{l}(A_{\sigma})'(x)=\frac{1}{2i(1+i \sigma)}M(x)c^{-1}(x)[A_{\sigma}(x)v^{>}(x;\sigma)+ B_{\sigma}(x)v^{<}(x;\sigma)]v^{<}(x;\sigma)\cr
(B_{\sigma})'(x)=-\frac{1}{2i(1+i \sigma)}M(x)c^{-1}(x)[A_{\sigma}(x)v^{>}(x;\sigma)+ B_{\sigma}(x)v^{<}(x;\sigma)]v^{>}(x;\sigma)\ea\es$$
which can be rewritten, using $c^{-1}(x)v^{>}(x;\sigma)v^{<}(x;\sigma)=1$,
$$\bs\ba{l}(A_{\sigma})'(x)=\frac{1}{2i (1+i \sigma)}M(x)[A_{\sigma}(x)+ B_{\sigma}(x)e^{2i(1+i \sigma)\phi(x)}]\cr
(B_{\sigma})'(x)=-\frac{1}{2i(1+i\sigma)}M(x)[A_{\sigma}(x)e^{-2i(1+i\sigma)\phi(x)}+ B_{\sigma}(x)]\ea\es$$

\bigskip

It is now natural to define an auxiliary function
\begin{equation}\label{eq:defS}
S^{>}_{\sigma}(x) = A_{\sigma}(x) e^{- 2i(1+i\sigma) \phi(x)} + B_{\sigma}(x),
\end{equation}
in order to link $u$ to $v^{<}$ via
\be\label{eq:Duhamel2}
u(x;\sigma) = S^{>}_{\sigma}(x) \, v^{<}(x;\sigma).
\ee
The intention is to use this relation when $u$ is outgoing to $+ \infty$ (when $\sigma = 0)$, or when $\lim_{x \to \infty} u(x;\sigma) = 0$ (when $\sigma < 0$, prior to taking the limit $\sigma \to 0^{-}$). There is no typo: {\em we express the wave outgoing to $+ \infty$ in terms of the incoming WKB wave.} The proof will make it clear why this choice of $S$ is necessary, rather than $A_{\sigma}(x)+ B_{\sigma}(x)e^{2i(1+i \sigma)\phi(x)}$.

After integrating the equations for $(A_{\sigma}(x))'$ and $(B_{\sigma}(x))'$, we obtain the functional equation (which depends on $x_0$)
\begin{align}\label{eq:eqS}
S^{>}_{\sigma}(x) = \; &A_{\sigma}(x_0)e^{-2i(1+i\sigma)\phi(x)}+B_{\sigma}(x_0) \\
& + \frac{1}{2i(1+i\sigma)}\int_{x_0}^{x}M(y)S^{>}_{\sigma}(y)(e^{-2i(1+i\sigma)(\phi(x)-\phi(y))}-1)dy, \notag
\end{align}
with two integration constants $A_{\sigma}(x_0)$ and $B_{\sigma}(x_0)$. For $\sigma < 0$, it will be shown in Proposition \ref{teo:Ssigma} that the condition $\lim_{x \to \infty} u(x;\sigma) = 0$ determines $B_\sigma(x_0)$ uniquely as
$$B_{\sigma}(x_0)=\frac{1}{2i(1+i\sigma)}\int_{x_0}^{+\infty}M(y)S^{>}_{\sigma}(y)dy,$$
hence that equation (\ref{eq:eqS}) becomes the Volterra equation for $S^{>}_{\sigma}(x)$:
\begin{equation}
\label{Volterrasigma}S^{>}_{\sigma}(x)= A_{\sigma}(x_0)e^{-2i(1+i\sigma)\phi(x)} + K_{\sigma}(S^{>}_{\sigma})(x),\end{equation}
where the operator $K_\sigma$ is given by 

\begin{equation}\label{def:VolterraK}K_{\sigma}(f)(x)=\frac{1}{2i(1+i\sigma)}[\int_{x_0}^{x}M(y)f(y)e^{-2i(1+i\sigma)(\phi(x)-\phi(y))}dy
+ \int_{x}^{+\infty}M(y)f(y)dy], f\in  C^0_{b}([x_0, \infty)).\end{equation}
A sufficient condition for convergence of the fixed-point iteration for this Volterra equation is the condition
\begin{equation}\label{eq:Mx0}
M_{x_0} := \int_{x_0}^\infty |M(x)| \, dx < 2.
\end{equation}
Note that $M(x)$ also depends on $\theta$;  Lemma \ref{ineq:Mx0} shows that (\ref{eq:Mx0}) is satisfied for all $0 < \theta < \theta_0(x_0)$. In that case, the Volterra equation can be solved as a convergent series
\[
S^{>}_{\sigma}(x) = A_{\sigma}(x_0)\left( \sum_{j \geq 0} s^{>}_{\sigma,j}(x) \right) \, e^{-2 i (1 + i \sigma) \phi(x)},
\]
which defines the functions $s^{>}_{\sigma,j}(x)$.

Denote by $S^{>}$ the unique solution of (\ref{Volterrasigma}) when $\sigma = 0$, and normalized via $A_{\sigma}(x_0) = 1$:
\begin{equation}
\label{eq:S>}
S^{>}(x)=e^{-2i\phi(x)}+K^0(S^{>})(x).
\end{equation} 
It is then shown in Proposition \ref{teo:Vseries} that the Volterra equation (\ref{Volterrasigma}) remains valid for $\sigma = 0$ when the limit $\sigma \to 0^{-}$ is taken, and indeed expresses the desired outgoing solution $u(x)$ as
\[
u(x) = S^{>}(x) \, v^{<}(x),
\]
which is the $\sigma \to 0$ limit of (\ref{eq:Duhamel2}) (again, when $A_{\sigma}(x_0) = 1$).

We are now in a position to present an equivalent reformulation of $u(x)$ in terms of a radiation condition at $x_0 > 0$.

\begin{theorem}
\label{teo:behavior-at-infinity} For all $\alpha>0$ and $x_0\geq 0$, there exists $\theta_0(x_0)$ such that, for all $\theta<\theta_0(x_0)$, the family of solutions of (\ref{eq:model-problem}) outgoing at $+\infty$ is characterized by
$$\frac{u'(x_0)}{u(x_0)}=
\frac{(v^>)'(x_0)+R_{x_0}(v^<)'(x_0)}{v^>(x_0)+R_{x_0}v^<(x_0)}
,$$
where
\begin{equation}\label{Rx0}
R_{x_0}=\frac{1}{2i}\int_{x_0}^{+\infty}M(x')S^{>}(x')dx'.
\end{equation}
\end{theorem}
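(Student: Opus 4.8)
The plan is to read the logarithmic derivative $u'(x_0)/u(x_0)$ directly off the Duhamel representation and to recognize $R_{x_0}$ as the ratio $B_\sigma(x_0)/A_\sigma(x_0)$ of the two integration constants in the limit $\sigma\to 0^-$. First I would fix $\sigma<0$ and evaluate the pair (\ref{eq:Duhamel})--(\ref{eq:Duhamelprime}) at $x=x_0$, where by construction the coefficients equal the integration constants $A_\sigma(x_0)$ and $B_\sigma(x_0)$. This gives
\[
\frac{u'(x_0;\sigma)}{u(x_0;\sigma)}=\frac{A_\sigma(x_0)\,(v^{>})'(x_0;\sigma)+B_\sigma(x_0)\,(v^{<})'(x_0;\sigma)}{A_\sigma(x_0)\,v^{>}(x_0;\sigma)+B_\sigma(x_0)\,v^{<}(x_0;\sigma)}.
\]
The decisive feature here is that the side condition (\ref{eq:condDuhamel}) has eliminated the derivatives $(A_\sigma)'$, $(B_\sigma)'$ from $u'$, so that the quotient depends only on the two constants and on the WKB data at $x_0$. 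Dividing numerator and denominator by $A_\sigma(x_0)$ reduces the entire statement to identifying the single number $B_\sigma(x_0)/A_\sigma(x_0)$ in the limit.

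Second, I would normalize the family by $A_\sigma(x_0)=1$ for all $\sigma\le 0$, as in the definition (\ref{eq:S>}) of $S^{>}$, and insert the value of $B_\sigma(x_0)$ forced by the radiation condition at $+\infty$. By Proposition \ref{teo:Ssigma} this value is
\[
B_\sigma(x_0)=\frac{1}{2i(1+i\sigma)}\int_{x_0}^{+\infty}M(y)\,S^{>}_\sigma(y)\,dy.
\]
Letting $\sigma\to 0^-$ and using Proposition \ref{teo:Vseries}, which provides the convergence $S^{>}_\sigma\to S^{>}$ and the representation of the outgoing solution, I would obtain $B_\sigma(x_0)\to \frac{1}{2i}\int_{x_0}^{+\infty}M\,S^{>}\,dy=R_{x_0}$, exactly as in (\ref{Rx0}). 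Since $v^{>}(x_0;\sigma)$, $v^{<}(x_0;\sigma)$ and their $x$-derivatives converge to their $\sigma=0$ values, the claimed expression for $u'(x_0)/u(x_0)$ is the $\sigma\to 0^-$ limit of the quotient above.

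Third, I would verify that the limiting denominator $v^{>}(x_0)+R_{x_0}v^{<}(x_0)$ does not vanish, which is what makes the quotient (and, through (\ref{eq:qR}), the eventual reflection coefficient) well defined. The contraction bound (\ref{eq:Mx0}) applied to (\ref{eq:S>}), whose inhomogeneous term $e^{-2i\phi}$ has modulus one, yields $\|S^{>}\|_\infty\le (1-M_{x_0}/2)^{-1}$, whence
\[
|R_{x_0}|\le \frac{1}{2}\,\|S^{>}\|_\infty\,M_{x_0}\le \frac{M_{x_0}}{2-M_{x_0}}.
\]
Because $M_{x_0}=O(\theta)$ is small for $\theta<\theta_0(x_0)$ by Lemma \ref{ineq:Mx0}, while $|v^{>}(x_0)|=c^{1/2}(x_0)>0$, the denominator stays bounded away from zero and $u(x_0)\ne 0$. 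Finally, since ${\mathcal U}^{>}_{+\infty}$ is one-dimensional (Proposition \ref{teo:coddington}(i)) and $u'(x_0)/u(x_0)$ is invariant under rescaling of $u$, the identity holds for every solution outgoing at $+\infty$, not just the normalized representative.

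The step I expect to be the main obstacle is the interchange of the limit $\sigma\to 0^-$ with the improper integral $\int_{x_0}^{+\infty}M\,S^{>}_\sigma$: it demands a bound on $S^{>}_\sigma$ uniform in $\sigma$ together with the local integrability of $M$ at $+\infty$, so that dominated convergence applies, and this is precisely where the smallness of $M_{x_0}$ and the structure of the kernel (\ref{def:VolterraK}) are used. That analytic content is, however, exactly what Propositions \ref{teo:Ssigma} and \ref{teo:Vseries} supply; the work remaining in the present theorem is the algebraic rearrangement of the Duhamel quotient together with the non-vanishing estimate above.
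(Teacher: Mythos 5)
Your argument for the \emph{necessity} half of the statement is essentially correct, and it is the same route the paper takes: read the quotient $u'(x_0)/u(x_0)$ off the Duhamel pair (\ref{eq:Duhamel})--(\ref{eq:Duhamelprime}), normalize $A_\sigma(x_0)=1$, identify $B_\sigma(x_0)$ from Proposition \ref{teo:Ssigma}, and pass to the limit $\sigma\to 0^-$ using the uniform-in-$\sigma$ bounds of Proposition \ref{teo:Vseries} (the paper packages this limiting step as Proposition \ref{teo:S0}, which states that every outgoing solution equals $A_*\,S^{>}v^{<}$ with $A_*\neq 0$, and then simply reads off $A(x_0)=1$, $B(x_0)=R_{x_0}$). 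Your handling of an \emph{arbitrary} outgoing $u$ — via scale invariance of the logarithmic derivative plus one-dimensionality of ${\mathcal U}^{>}_{+\infty}$ — is legitimate, since your step three shows the normalized limit $S^{>}v^{<}$ is a nonzero element of ${\mathcal U}^{>}_{+\infty}$; it even avoids the subsequence-compactness argument for $A_\sigma(x_0)$ that the paper uses inside Proposition \ref{teo:S0}. Your quantitative bound $|R_{x_0}|\le M_{x_0}/(2-M_{x_0})<1$, giving $u(x_0)\neq 0$, is also fine, and sharper than the paper's remark that $(S^{>}v^{<})(x_0)$ and $(S^{>}v^{<})'(x_0)$ cannot vanish simultaneously.

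The gap is that the theorem asserts the identity at $x_0$ \emph{characterizes} the outgoing family: it is an equivalence, and you prove only the implication ``outgoing $\Rightarrow$ identity.'' The converse — any solution of (\ref{eq:model-problem}) whose logarithmic derivative at $x_0$ equals the stated value must be outgoing at $+\infty$ — appears nowhere in your proposal, and it is not decorative: it is precisely what lets the paper replace the condition at infinity by a boundary condition at $x_0$ when it solves the Cauchy problem on $[0,x_0]$ (Proposition \ref{p:outgoing} explicitly invokes the ``if and only if''). The paper proves it in the ``Reciprocally'' paragraph of its proof: given such a $u$, consider the rescaled outgoing solution $u_*=\bigl(u(x_0)/(S^{>}v^{<})(x_0)\bigr)\,S^{>}v^{<}$; by the forward implication $u_*$ satisfies the same identity, and since $u_*(x_0)=u(x_0)$ it follows that $u_*'(x_0)=u'(x_0)$; uniqueness of the Cauchy problem for the linear second-order ODE then forces $u=u_*$, so $u$ is outgoing. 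With the facts you already established (the normalized outgoing solution exists and $u(x_0)\neq 0$), this is a three-line addition — but without it the statement as written is not proven.
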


This radiation condition at $x_0 > 0$ must now be compared with the reference formula for the reflection coefficient $R$, namely 
$$\frac{u'(0)}{u(0)}=i\frac{R-1}{R+1}.$$
As it is written, it may seem not straightforward to see the similarity. Nevertheless, one can rewrite the identity of Theorem \ref{teo:behavior-at-infinity} as
$$\frac{u'(x_0)}{u(x_0)}=\frac{(v^>)'(x_0)}{v^>(x_0)}\frac{1+R_{x_0}\frac{(v^<)'(x_0)}{(v^>)'(x_0)}}{1+R_{x_0}\frac{v^<(x_0)}{v^>(x_0)}}$$
where it can be noticed that, in the constant coefficient case, $\frac{(v^<)'(x_0)}{(v^>)'(x_0)}=-\frac{v^<(x_0)}{v^>(x_0)}=-1$ and $\frac{(v^>)'(x_0)}{v^>(x_0)}=-i$.

The interpretation of the role of $x_0$ is also enlightened by the following remarks:
\begin{itemize}
\item When $\alpha > 1$, the limit as $x_0 \to 0$ exists, is finite, and gives
\[
R=\frac{1}{2i}\int_{0}^{+\infty}M(x')S^{>}(x')dx',
\] 
where $S^{>}(x)$ is now understood to correspond to $x_0 = 0$ in (\ref{eq:VolterraK}) and (\ref{eq:S>}). In fact, the whole argument could have used $x_0 = 0$ in that case.

\item When $\alpha < 1$, the limit cannot be taken in the same fashion. The operator $K_{\sigma}$ can no longer be defined by (\ref{def:VolterraK}) in the limit $x_0 \to 0$, due to the lack of integrability of $M(x)$ at $x = 0$. The quantity $R_{x_0}$ also diverges as $x_0 \to 0$. Instead, we consider the Cauchy problem on $[0,x_0]$, and find an expression to relate $R$ to $R_{x_0}$. Since this more general argument also applies to the case $\alpha > 1$, we do not differentiate the two cases in the proof in Section \ref{sec-limit-R}.

\end{itemize}

The limit of $\frac{R}{\theta}$ is finally considered when $\theta\rightarrow 0$, which establishes Theorem \ref{teo:main}. This is the aim of Section \ref{sec-limit-R}. Precise estimates on $\frac{R}{\theta}-\frac{\Gamma(\alpha+1)}{(2i)^{\alpha+2}}$ and on the function $\Gamma$ are given in Section \ref{sec-limit-theta}.

\section{Volterra series and radiation condition}\label{sec:Volterra}
In this section, we establish the Volterra equation for $S^{>}_{\sigma}(x)$, the convergence properties of the corresponding Volterra series. We also prove Theorem \ref{teo:behavior-at-infinity}.
 
Recall that $S^{>}_{\sigma}(x)$ is defined by (\ref{eq:defS}) or (\ref{eq:Duhamel2}), without restriction on $\sigma \in \R$, in relation to any solution $u(x;\sigma)$ of (\ref{eq:model-sigma}), and that it obeys equation (\ref{eq:eqS}) with two arbitrary constants $A_{\sigma}(x_0)$ and $B_{\sigma}(x_0)$. In the case $\sigma < 0$, we start by fixing $B_{\sigma}(x_0)$ in equation (\ref{eq:eqS}), from imposing the condition that $u(x;\sigma) \to 0$ as $x \to \infty$ (which is equivalent to $u(x;\sigma)$ uniformly bounded on $[x_0, \infty)$ by (iv) of Proposition \ref{coddington}.) The result is a Volterra equation on the half-line $[x_0, \infty)$, which involves the operator $K_{\sigma}$ (Definition \ref{def:VolterraK}):

\begin{proposition}
\label{teo:Ssigma}
Assume $x_0>0$ and $\sigma < 0$. When $u(x;\sigma)$ is uniformly bounded for $x \in [x_0, \infty)$, then the function $S^{>}_{\sigma}$ given by (\ref{eq:Duhamel2}) obeys the Volterra equation
\begin{equation}\label{eq:VolterraK}
S^{>}_{\sigma}(x)= A_{\sigma}(x_0)e^{-2i(1+i\sigma)\phi(x)}+K_{\sigma}(S^{>}_{\sigma})(x).
\end{equation}
\end{proposition}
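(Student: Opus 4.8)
The plan is to take equation \eqref{eq:eqS} --- already known to hold with the two free constants $A_{\sigma}(x_0)$ and $B_{\sigma}(x_0)$ --- and to show that the hypothesis that $u(\cdot;\sigma)$ is bounded pins down $B_{\sigma}(x_0)$ to the single value $\frac{1}{2i(1+i\sigma)}\int_{x_0}^{+\infty} M S^{>}_{\sigma}$, after which \eqref{eq:VolterraK} follows by a one-line rearrangement. The engine of the argument is the identity $S^{>}_{\sigma}(x)=u(x;\sigma)/v^{<}(x;\sigma)$ coming from \eqref{eq:Duhamel2}, together with the fact that $v^{<}$ grows. Indeed, since $v^{<}(x;\sigma)=b(x)e^{i(1+i\sigma)\phi(x)}$, its modulus is $b(x)e^{-\sigma\phi(x)}$; with $\sigma<0$ and $\phi\to+\infty$ (recall $\phi'>0$), the exponential beats the polynomial decay of $b=(1+\theta x^{\alpha})^{-1/4}$, so $|v^{<}(x;\sigma)|\to+\infty$. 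Boundedness of $u$ then forces $S^{>}_{\sigma}(x)\to0$; in particular $S^{>}_{\sigma}$ is bounded on $[x_0,\infty)$, say by $C$, so that $\int_{x_0}^{\infty} M S^{>}_{\sigma}$ converges absolutely, using the integrability of $M$ at $+\infty$ recorded in Appendix \ref{sec:appendixB} (cf. \eqref{eq:Mx0}).

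First I would split the kernel $e^{-2i(1+i\sigma)(\phi(x)-\phi(y))}-1$ in \eqref{eq:eqS} into its two summands, and then let $x\to+\infty$. Three limits have to be checked. The boundary term $A_{\sigma}(x_0)e^{-2i(1+i\sigma)\phi(x)}$ vanishes because its modulus is $e^{2\sigma\phi(x)}\to0$. The ``$-1$'' integral converges to $\int_{x_0}^{+\infty} M S^{>}_{\sigma}$ by the absolute convergence just noted. The remaining integral, carrying the factor $e^{-2i(1+i\sigma)(\phi(x)-\phi(y))}$, must be shown to tend to $0$; this is the only nonroutine point. I would handle it by dominated convergence: writing the integrand on $[x_0,\infty)$ with a cutoff $\mathbf{1}_{\{y\le x\}}$, it is dominated by $C|M(y)|\in L^{1}$, while for each fixed $y$ the oscillatory factor has modulus $e^{2\sigma(\phi(x)-\phi(y))}\to0$ as $x\to\infty$ since $\sigma<0$ and $\phi(x)-\phi(y)\to+\infty$. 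Passing to the limit in \eqref{eq:eqS}, and using $S^{>}_{\sigma}(x)\to0$ on the left-hand side, then gives $0=B_{\sigma}(x_0)-\frac{1}{2i(1+i\sigma)}\int_{x_0}^{+\infty} M S^{>}_{\sigma}$, which is the required value of $B_{\sigma}(x_0)$.

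Finally, I would substitute this value back into the split form of \eqref{eq:eqS}: the constant $B_{\sigma}(x_0)$ absorbs the ``$-1$'' integral $-\frac{1}{2i(1+i\sigma)}\int_{x_0}^{x} M S^{>}_{\sigma}$ into the tail $\frac{1}{2i(1+i\sigma)}\int_{x}^{+\infty} M S^{>}_{\sigma}$, and the surviving oscillatory integral over $[x_0,x]$ together with this tail are precisely $K_{\sigma}(S^{>}_{\sigma})(x)$ as defined in \eqref{def:VolterraK}, yielding \eqref{eq:VolterraK}. The only genuine obstacle is the dominated-convergence step for the oscillatory integral; everything else is bookkeeping, and the sign condition $\sigma<0$ is exactly what makes both the decay of the boundary term and the decay of the kernel work.
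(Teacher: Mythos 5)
Your proof is correct and follows essentially the same route as the paper's: both establish $S^{>}_{\sigma}(x)\to 0$ from the boundedness of $u$ and the exponential growth of $v^{<}$ for $\sigma<0$, pin down $B_{\sigma}(x_0)$ by letting $x\to+\infty$ in \eqref{eq:eqS} via dominated convergence with dominating function a constant times $|M|\in L^{1}$, and then substitute back to recover $K_{\sigma}$. The only (immaterial) difference is that you split the kernel $e^{-2i(1+i\sigma)(\phi(x)-\phi(y))}-1$ into its two summands before applying dominated convergence, whereas the paper applies it to the combined kernel at once.
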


\begin{proof}[Proof of proposition \ref{teo:Ssigma}]

Fix $\sigma<0$. If $u(x;\sigma)$ is bounded on $[x_0, \infty)$, then $S^{>}_{\sigma}(x)=\frac{u(x;\sigma)}{v^{<}_{\sigma}(x)}$ tends to 0 when $x \to +\infty$, thanks to the inequality 
$$\vert \frac{u(x;\sigma)}{v^{<}_{\sigma}(x)}\vert\leq b^{-1}(x) e^{\sigma \phi(x)} \max_{x_0 \leq x <\infty} |u(x;\sigma)|,$$
and the fact that $b^{-1}(x)e^{\sigma \phi(x)}=(1+\theta x^\alpha)^{\frac14}e^{\sigma\int_0^x(1+\theta y^{\alpha})^{\frac12}dy}$ tends to zero\footnote{This is also the case for the more general form $\lambda(x) \ne x$.} when $x\rightarrow +\infty$ for $\sigma<0$. It is thus bounded on $[x_0, +\infty)$. Since (i) $S^{>}_{\sigma}$ is bounded, (ii) $\int_{x_0}^{+\infty}\vert M(y)\vert dy<+\infty$, (iii) $\vert e^{-2i(1+i\sigma)(\phi(x)-\phi(y))}-1\vert\leq 2$ for $y\leq x$, and (iv) $e^{2 \sigma \phi(x)} \to 0$ for $x \to \infty$, the conditions of Lebesgue's dominated convergence theorem are met and we obtain
 $$\lim_{x \to + \infty} \int_{x_0}^{x}M(y)S^{>}_{\sigma}(y) \left( e^{-2i(1+i\sigma)(\phi(x)-\phi(y))}-1 \right) dy =  -\int_{x_0}^{+\infty}M(y)S^{>}_{\sigma}(y)dy.$$
Hence a necessary condition for $\lim_{x \to + \infty} S^{>}_{\sigma}(x) = 0$, in view of equation (\ref{eq:eqS}), is that 
$$B_{\sigma}(x_0)=\frac{1}{2i(1+i\sigma)}\int_{x_0}^{+\infty}M(y)S^{>}_{\sigma}(y)dy.$$
Equation (\ref{eq:eqS}) thus becomes
\begin{align}\label{eq-fonct}
S^{>}_{\sigma}(x)&= A_{\sigma}(x_0)e^{-2i(1+i\sigma)\phi(x)}+\frac{1}{2i(1+i\sigma)}[\int_{x_0}^{x}M(y)S^{>}_{\sigma}(y)e^{-2i(1+i\sigma)(\phi(x)-\phi(y))}dy \\
&+ \int_{x}^{+\infty}M(y)S^{>}_{\sigma}(y)dy], \notag
\end{align}
or simply $S^{>}_{\sigma}(x)= A_{\sigma}(x_0)e^{-2i(1+i\sigma)\phi(x)}+K_{\sigma}(S^{>}_{\sigma})(x).$
\end{proof}

Note in passing that the proof argument with Lebesgue's theorem would not have been possible if $S^{>}_{\sigma}$ had been defined from $v^{>}$ instead of $v^{<}$ in equation (\ref{eq:Duhamel2}).

The Volterra equation (\ref{eq:VolterraK}) can now be considered in the case $\sigma = 0$ as well. The following result shows that $M_{x_0} < 2$, as in equation (\ref{eq:Mx0}), is a sufficient condition under which $K_\sigma$ is a contraction, hence for which the equation can be solved by iteration.

\begin{proposition}\label{teo:Vseries}
Let $x_0 > 0$ and $\theta > 0$, such that $M_{x_0} < 2$, and let $\sigma \leq 0$.
\begin{itemize}
\item[(i)] The operator $K_{\sigma}$ is a contraction, and satisfies the functional inequality
\[
\vert\vert K_{\sigma}(f)(x)\vert\vert_{L^\infty(x_0,\infty)} \leq \frac{M_{x_0}}{2} \, \vert\vert f\vert\vert_{L^\infty(x_0,\infty)}.
\]
\item[(ii)] The unique solution to equation (\ref{eq:VolterraK}) is $A_{\sigma}(x_0)S^{>}_{\sigma}(x)$, where 
\[
S^{>}_\sigma(x) =  \sum_{n=0}^{+\infty}S^{>}_{\sigma,n}(x),
\]
where $S^{>}_{\sigma,n}(x)$ are defined in sequence as
\be\label{sequences}\ba{l}S^{>}_{\sigma,0}(x)= e^{-2i(1+i\sigma)\phi(x)},\cr
S^{>}_{\sigma,n+1}(x)=K_{\sigma}(S^{>}_{\sigma,n})(x), \qquad n\geq 0\ea\ee
and satisfy
$$\vert\vert S^{>}_{\sigma,n}(x)\vert\vert_{L^{\infty}(x_0, +\infty)}\leq \left( \frac{M_{x_0}}{2} \right)^n.$$

Furthermore, introduce $s^{>}_{\sigma,n}=S^{>}_{\sigma,n}(x)e^{2i(1+i\sigma)\phi(x)}$. One has
\[
S^{>}_{\sigma}(x) =  \left( \sum_{n=0}^{+\infty}s^{>}_{\sigma,n}(x) \right) \, e^{-2i(1+i\sigma)\phi(x)},
\]
with
\[
\vert\vert s^{>}_{\sigma,n}(x)\vert\vert_{L^{\infty}(x_0,+\infty)}\leq \left( \frac{M_{x_0}}{2} \right)^n
\]
and
\[
\vert\vert \sum_{n=N+1}^{+\infty}s^{>}_{\sigma,n}(x)\vert\vert_{L^{\infty}(x_0,+\infty)}\leq \frac{1}{1-\frac{M_{x_0}}{2}}\left( \frac{M_{x_0}}{2} \right)^{N+1}
\]
\item[(iii)] For each $\sigma<0$, the family of solutions of (\ref{eq:model-sigma}), bounded for each $\sigma \leq 0$ when $x\rightarrow +\infty$, is given by
\begin{equation}
\label{solutionbornee}
u(x;\sigma)= A_{\sigma}(x_0) \, \left( \sum_{n=0}^{+\infty}s^{>}_{\sigma,n}(x) \right) \; b(x) e^{-i(1+i\sigma)\phi(x)}.
\end{equation}
\end{itemize}
\end{proposition}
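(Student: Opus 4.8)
The plan is to establish the three items in sequence, with (i) carrying all of the genuine analysis and (ii)--(iii) following by bookkeeping on top of it. For (i), I would start from the definition (\ref{def:VolterraK}) and estimate its three factors separately. The scalar prefactor obeys $\bigl|\tfrac{1}{2i(1+i\sigma)}\bigr| = \tfrac{1}{2\sqrt{1+\sigma^2}} \le \tfrac12$. In the first integral, since $\phi$ is increasing and $y \le x$ we have $\phi(x)-\phi(y)\ge 0$, so the exponent $-2i(1+i\sigma)(\phi(x)-\phi(y))$ has real part $2\sigma(\phi(x)-\phi(y))\le 0$ because $\sigma\le0$; hence $\bigl|e^{-2i(1+i\sigma)(\phi(x)-\phi(y))}\bigr|\le 1$. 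The second integral carries no exponential. Pulling $\|f\|_{L^\infty}$ out and using $\int_{x_0}^x|M| + \int_x^{+\infty}|M| = M_{x_0}$ gives $|K_\sigma(f)(x)|\le \tfrac{M_{x_0}}2\|f\|_{L^\infty}$ uniformly in $x$, which is the stated inequality. Since $M_{x_0}<2$ by hypothesis, $K_\sigma$ is a strict contraction; one also checks from integrability of $M$ that $K_\sigma f$ is continuous and bounded, so $K_\sigma$ maps $C^0_b([x_0,\infty))$ to itself.

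With (i) in hand, the Banach fixed-point theorem gives existence and uniqueness of the solution of (\ref{eq:VolterraK}); by linearity it equals $A_\sigma(x_0)$ times the normalized solution $S^{>}_\sigma$ obtained by Picard iteration from the source $e^{-2i(1+i\sigma)\phi}$, i.e.\ the series $\sum_n S^{>}_{\sigma,n}$ with recursion (\ref{sequences}). Because $|S^{>}_{\sigma,0}(x)| = e^{2\sigma\phi(x)}\le 1$ for $\sigma\le 0$ and $x\ge x_0>0$, the bound $\|S^{>}_{\sigma,n}\|_{L^\infty}\le (M_{x_0}/2)^n$ follows by induction from (i). For the refined statements I would pass to $s^{>}_{\sigma,n} = S^{>}_{\sigma,n}e^{2i(1+i\sigma)\phi}$; multiplying the recursion by $e^{2i(1+i\sigma)\phi(x)}$ and using $c^{-1}v^{>}v^{<}=1$ turns it into
\[
s^{>}_{\sigma,n+1}(x) = \frac{1}{2i(1+i\sigma)}\Big[\int_{x_0}^x M(y)\,s^{>}_{\sigma,n}(y)\,dy + \int_x^{+\infty} M(y)\,s^{>}_{\sigma,n}(y)\,e^{2i(1+i\sigma)(\phi(x)-\phi(y))}\,dy\Big].
\]
Here the base case is exactly $s^{>}_{\sigma,0}\equiv 1$, and in the second integral $y\ge x$ forces $\phi(x)-\phi(y)\le 0$, so its exponential again has modulus $\le1$ for $\sigma\le0$; the same estimate as in (i) yields $\|s^{>}_{\sigma,n+1}\|_{L^\infty}\le \tfrac{M_{x_0}}2\|s^{>}_{\sigma,n}\|_{L^\infty}$, hence $\|s^{>}_{\sigma,n}\|_{L^\infty}\le (M_{x_0}/2)^n$ by induction. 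The tail estimate is the geometric sum $\sum_{n>N}(M_{x_0}/2)^n = (M_{x_0}/2)^{N+1}/(1-M_{x_0}/2)$, and the factored form $S^{>}_{\sigma} = \bigl(\sum_n s^{>}_{\sigma,n}\bigr)e^{-2i(1+i\sigma)\phi}$ is immediate from the definition of $s^{>}_{\sigma,n}$.

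Finally, (iii) combines the series with (\ref{eq:Duhamel2}): writing $u(x;\sigma)=A_\sigma(x_0)S^{>}_\sigma(x)\,v^{<}(x;\sigma)$ and substituting $v^{<}(x;\sigma)=b(x)e^{i(1+i\sigma)\phi(x)}$ from (\ref{model-solution-sigma}) together with the factored form of $S^{>}_\sigma$, the factors $e^{-2i(1+i\sigma)\phi}$ and $e^{i(1+i\sigma)\phi}$ collapse to $e^{-i(1+i\sigma)\phi}$, giving exactly (\ref{solutionbornee}). I would close the equivalence by checking that this expression is genuinely a bounded solution: it solves (\ref{eq:model-sigma}) by the Duhamel construction that produced (\ref{eq:Duhamel2}), and it is bounded (indeed tends to $0$) because $b(x)=(1+\theta x^\alpha)^{-1/4}$ is bounded, $|e^{-i(1+i\sigma)\phi(x)}|=e^{\sigma\phi(x)}\to0$ for $\sigma<0$, and $\sum_n s^{>}_{\sigma,n}$ is uniformly bounded by $1/(1-M_{x_0}/2)$; conversely Proposition \ref{teo:Ssigma} shows every bounded solution arises this way.

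This argument presents no deep obstacle; it is a textbook contraction-mapping and Neumann-series computation. The one point demanding genuine care --- and the structural reason the scheme is built around $v^{<}$ rather than $v^{>}$ (cf.\ the remark after Proposition \ref{teo:Ssigma}) --- is the sign bookkeeping that makes \emph{both} exponential kernels have modulus $\le 1$ simultaneously: the Volterra integral uses $y\le x$ and the tail integral uses $y\ge x$, and each bound rests on $\sigma\le 0$. Tracking these signs correctly in both integrals, and recognizing that the $s^{>}_{\sigma,n}$ variables (whose base value is exactly $1$) are the properly normalized quantities for the geometric bound, is where the only subtlety lies.
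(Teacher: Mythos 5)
Your proof is correct and follows essentially the same route as the paper's: the contraction estimate for $K_\sigma$, the Picard/Neumann series, a separate recursion for the $s^{>}_{\sigma,n}$ estimated directly (rather than converting the bound on $S^{>}_{\sigma,n}$, which would fail since the conversion factor $e^{-2\sigma\phi(x)}$ is unbounded for $\sigma<0$ --- precisely the subtlety the paper flags), and the two-way argument for (iii) combining boundedness of the series with Proposition \ref{teo:Ssigma}. As a minor remark, your displayed recursion for $s^{>}_{\sigma,n+1}$ is the algebraically correct one, with the oscillatory factor $e^{2i(1+i\sigma)(\phi(x)-\phi(y))}$ inside the tail integral; the paper's corresponding display places the exponentials differently (an apparent typo), but both versions yield the same geometric bound.
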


\begin{proof}[Proof of Proposition \ref{teo:Vseries}]
The proof of (i) is a simple calculation. For (ii), consider the sequence $S^{>}_{\sigma,n}$ as defined in the proposition. As $K_{\sigma}$ is a contraction, the series $\sum_{n\geq 0}S^{>}_{\sigma,n}(x)$ is uniformly convergent on $[x_0, +\infty)$. This implies that the partial sum $S^{>}_{\sigma,N}= \sum_{k=0}^N S^{>}_{\sigma,k}$ converges uniformly towards a continuous function as $N\rightarrow +\infty$.

Note that the inequality on $S^{>}_{\sigma,n}$  does not imply the inequality on $s^{>}_{\sigma,n}$ ($s^{>}_{\sigma,n}= e^{2i(1+i\sigma)\phi(x)}S^{>}_{\sigma,n}$ implies $\vert s^{>}_{\sigma,n}(x)\vert \leq \left(\frac{M_{x_0}}{2}\right)^ne^{-2\sigma\phi(x)}$, which is not bounded for $\sigma<0$). The equality which leads to the estimate on $s^{>}_{\sigma,n}$ is the sequence
$$s^{>}_{\sigma,n+1}(x)=\frac{1}{2i}[e^{-2i(1+i\sigma)\phi(x)}\int_{x_0}^xM(y)s^{>}_{\sigma,n}(y)dy+\int_{x}^{+\infty}M(y)s^{>}_{\sigma,n} dy]$$
which imply
$$\vert s^{>}_{\sigma,n+1}(x)\vert \leq\frac{1}{2}[e^{2\sigma\phi(x)}\int_{x_0}^x\vert M(y)\vert \vert s^{>}_{\sigma,n}(y)\vert dy+\int_{x}^{+\infty}\vert M(y)\vert \vert s^{>}_{\sigma,n}(y)\vert dy]\leq \frac{M_{x_0}}{2}\mbox{max}_{[x_0, +\infty)}\vert s^{>}_{\sigma,n}(y)\vert.$$
It is easy to deduce, by iteration, the estimate on $s^{>}_{\sigma,n}$ and to deduce from the estimate on $s^{>}_{\sigma,n}$ the estimate on $\sum_{n\geq N+1} s^{>}_{\sigma,n}$.

Item (iii) is a consequence of $v^{<}_{\sigma}(x)=b(x)e^{i(1+i\sigma)\phi(x)}$ and the fact that if $u(x,\sigma)$ is a bounded solution of (\ref{eq:model-sigma}), then $u(x,\sigma)=S^{>}_{\sigma}(x)v^{<}_{\sigma}(x)$. As $A_{\sigma}(x_0) \, \left( \sum_{n=0}^{+\infty}s^{>}_{\sigma,n}(x) \right) \; b(x) e^{-i(1+i\sigma)\phi(x)}$ is bounded on $[x_0, +\infty)$, we proved that $u(x;\sigma)$ obtained by this procedure is bounded.

The proof of (iii) is a consequence of the uniform bound $\vert \sum_{n=0}^{+\infty}s^{>}_{\sigma,n}(x)\vert\leq \frac{1}{1-\frac{M_{x_0}}{2}}$ and $b\leq 1, \phi \geq 0, \sigma <0$. Using the previous estimate, one obtains
$$\vert u(x;\sigma)\vert\leq \vert A_{\sigma}(x_0) \vert\frac{1}{1-\frac{M_{x_0}}{2}}e^{\sigma \phi(x)}b(x)\leq \vert A_{\sigma}(x_0) \vert\frac{1}{1-\frac{M_{x_0}}{2}}$$
thanks to $b(x)\leq 1$, $\phi(x)\geq 0$ and $\sigma<0$. Hence $u$ given by (\ref{solutionbornee}) is bounded. The proposition is proven.

Note that in the third item one must consider $\sigma<0$ because, for $\sigma=0$, all solutions of (\ref{eq:WKB-ext2}) are bounded.

\end{proof}

For illustration, the first two terms are
\begin{align*}
s^{>}_{\sigma,0}(x)&=1, \\
s^{>}_{\sigma,1}(x)&=\frac{1}{2i(1+i\sigma)}[\int_{x_0}^xM(y)dy+\int_x^{+\infty}M(y)e^{2i(1+i\sigma)(\phi(x)-\phi(y))}dy], \\
\ldots
\end{align*}

\begin{proposition}
\label{teo:S0}
Let $x_0 > 0$ and $\theta > 0$, such that $M_{x_0} < 2$.	
\begin{enumerate}
\item
The family of solutions of (\ref{eq:model-problem}), outgoing at $+\infty$, is given by a constant times the nonzero function
$$S^{>}(x)v^{<}(x),$$
where $S^{>}$ solves (\ref{eq:S>}).
\item The function $S^{>}(x)e^{2i\phi(x)}$ has a uniformly convergent expansion
\[
S^{>}(x)e^{2i\phi(x)}= 1+\sum_{j\geq 1}s^{>}_{j}(x)
\]
\item The inequalities of Proposition \ref{teo:Vseries} extend to the case $\sigma=0$.
\end{enumerate}
\end{proposition}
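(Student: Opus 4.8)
The plan is to derive Proposition~\ref{teo:S0} from Proposition~\ref{teo:Vseries} by passing to the limit $\sigma \to 0^-$, the decisive point being that all estimates in Proposition~\ref{teo:Vseries} are uniform in $\sigma \le 0$. I would first dispose of item~3, which is essentially a matter of rereading the earlier proof at $\sigma = 0$. The contraction bound $\|K_\sigma(f)\|_{L^\infty} \le (M_{x_0}/2)\|f\|_{L^\infty}$ and the bounds $\|S^{>}_{\sigma,n}\|_{L^\infty} \le (M_{x_0}/2)^n$ only used $|1/(1+i\sigma)| \le 1$, $\phi \ge 0$, and $|e^{-2i(1+i\sigma)(\phi(x)-\phi(y))}| \le 1$ for $y \le x$; all of these remain valid at $\sigma = 0$, where in fact the compensating factor $e^{-2\sigma\phi}$ equals $1$ identically, so that $|s^{>}_{n}| = |S^{>}_{n}|$ and the delicate separate estimate for $s^{>}_{\sigma,n}$ needed when $\sigma<0$ becomes trivial. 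Thus $K^0$ is a contraction on $C^0_b([x_0,\infty))$ and the geometric term-by-term and tail bounds carry over verbatim. Only item~(iii) of Proposition~\ref{teo:Vseries} genuinely required $\sigma < 0$, namely the decay at $x = +\infty$, which is exactly what is not claimed at $\sigma = 0$.

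Item~2 is then immediate. Because $K^0$ is a contraction, equation~(\ref{eq:S>}) has a unique solution $S^{>} \in C^0_b([x_0,\infty))$, given by the Neumann series $S^{>} = \sum_{n\ge0} S^{>}_{n}$ with $S^{>}_0 = e^{-2i\phi}$ and $S^{>}_{n+1} = K^0(S^{>}_n)$. Setting $s^{>}_j := S^{>}_j e^{2i\phi}$ and using $\|s^{>}_j\|_{L^\infty} \le (M_{x_0}/2)^j$ with $M_{x_0}/2 < 1$, the series $\sum_{j\ge 1} s^{>}_j$ converges uniformly by a geometric majorant, and multiplying the Neumann series by $e^{2i\phi}$ gives $S^{>}e^{2i\phi} = 1 + \sum_{j\ge 1} s^{>}_j$.

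The substance of the proposition, and the step I expect to be the main obstacle, is item~1: identifying $S^{>}v^{<}$ with the outgoing solution. I would normalize $A_\sigma(x_0) = 1$ and set $u(x;\sigma) := S^{>}_\sigma(x)\,v^{<}_\sigma(x)$ for $\sigma < 0$. By Proposition~\ref{teo:Vseries}(iii) this solves~(\ref{eq:model-sigma}) and satisfies $u(x;\sigma) \to 0$ as $x \to +\infty$, so it is an admissible approximating family in the sense of Definition~\ref{def:polarized-waves}. It remains to show $u(x;\sigma) \to S^{>}(x)v^{<}(x)$ uniformly on compact $x$-sets as $\sigma \to 0^-$. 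I would establish, by induction on $n$ and dominated convergence in the recursion defining $s^{>}_{\sigma,n}$ (the integrands converge pointwise, are dominated using that $|M|$ is integrable, the uniform bound $\|s^{>}_{\sigma,n}\|_{L^\infty}\le (M_{x_0}/2)^n$, and $1/(1+i\sigma)\to 1$), that $s^{>}_{\sigma,n} \to s^{>}_n$ uniformly on compacts. The uniform-in-$\sigma$ tail bound then permits interchanging $\lim_{\sigma\to0^-}$ with $\sum_n$: truncating the series at an index $N$ so that the tail is below a prescribed $\epsilon$ uniformly for $\sigma \le 0$, one passes to the limit in the finite partial sum. A point requiring care is that $v^{<}_\sigma = b\,e^{i(1+i\sigma)\phi}$ itself grows like $e^{-\sigma\phi(x)}$ as $x \to \infty$ when $\sigma < 0$, so the limit must be taken on the bounded product $u(x;\sigma) = \bigl(\sum_n s^{>}_{\sigma,n}(x)\bigr)\,b(x)\,e^{-i(1+i\sigma)\phi(x)}$ of~(\ref{solutionbornee}) rather than on the two factors separately.

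Finally I would close item~1 in three short steps. First, the limit $S^{>}v^{<}$ solves~(\ref{eq:model-problem}) by continuous dependence of solutions of the linear second-order ODE on the parameter: the coefficient $(1+i\sigma)^2 c^{-2}$ converges locally uniformly to $c^{-2}$ and $u(\cdot;\sigma)$ converges locally uniformly, so the derivatives converge through the equation and the limit satisfies~(\ref{eq:model-problem}). Second, $S^{>}v^{<}$ is not identically zero: if $S^{>}\equiv0$ then~(\ref{eq:S>}) would force $e^{-2i\phi}=K^0(0)=0$, which is absurd, and $v^{<}=b\,e^{i\phi}$ never vanishes. Third, since ${\mathcal U}^{>}_{+\infty}$ is one-dimensional by Proposition~\ref{teo:coddington}(i), every solution outgoing at $+\infty$ is a constant multiple of the nonzero element $S^{>}v^{<}$, which is item~1.
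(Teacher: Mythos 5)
Your proof is correct, but its logical organization differs from the paper's in a way worth recording. The decisive divergence is in item~1: you prove that $S^{>}v^{<}$ itself belongs to ${\mathcal U}^{>}_{+\infty}$ (by exhibiting the approximating family $S^{>}_{\sigma}v^{<}_{\sigma}$ supplied by Proposition~\ref{teo:Vseries}, and justifying its locally uniform convergence through dominated convergence in the recursion for $s^{>}_{\sigma,n}$ together with the uniform-in-$\sigma$ geometric tail bound), and you then conclude by citing the one-dimensionality of ${\mathcal U}^{>}_{+\infty}$ from Proposition~\ref{teo:coddington}(i), i.e.\ the Coddington--Levinson material of Appendix~\ref{sec:appendixA}. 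The paper never invokes the dimension count here: it takes an \emph{arbitrary} nonzero $u\in{\mathcal U}^{>}_{+\infty}$ with its own approximating family $u(\cdot;\sigma)$, writes $u(\cdot;\sigma)=A_{\sigma}(x_0)\,S^{>}_{\sigma}v^{<}_{\sigma}$ via Proposition~\ref{teo:Vseries}(iii), shows $|A_{\sigma}(x_0)|$ stays bounded by evaluating at a point $X_1$ where $(S^{>}v^{<})(X_1)\neq 0$, extracts a convergent subsequence, rules out a zero limit, and concludes $u=A_*S^{>}v^{<}$; in effect it reproves the spanning property rather than quoting it. Your route is shorter but leans on the appendix; the paper's route keeps the characterization self-contained at the cost of handling an arbitrary approximating family. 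Two further differences favor your write-up. First, your nonvanishing argument (if $S^{>}\equiv 0$, equation (\ref{eq:S>}) would force $e^{-2i\phi}\equiv 0$, absurd) is more elementary than the paper's, which computes $(S^{>}v^{<})(x_0)$ and $(S^{>}v^{<})'(x_0)$ and observes that no value of $R_{x_0}$ annihilates both. Second, your careful proof that $S^{>}_{\sigma}v^{<}_{\sigma}\to S^{>}v^{<}$ locally uniformly --- carried out, as you rightly insist, on the bounded product form (\ref{solutionbornee}) rather than factor by factor --- supplies a step the paper needs but uses silently: its inequality $|(S^{>}_{\sigma}v^{<}_{\sigma})(X_1)|>\frac12 |(S^{>}v^{<})(X_1)|$ for $\sigma$ near $0^{-}$ presupposes exactly this convergence. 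So your argument is both a legitimate alternative to and a useful completion of the published one.
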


\begin{proof}[Proof of Proposition \ref{teo:S0}]

Prove first that $S^{>}v^{<}$ is nonzero. One checks the two relations
$$(S^{>}v^{<})(x_0)=b(x_0)(e^{-i\phi(x_0)}+R_{x_0}e^{i\phi(x_0)}), (S^{>}v^{<})'(x_0)=b'(x_0)(S^{>}v^{<})(x_0)+i\phi'(x_0)(R_{x_0}e^{i\phi(x_0)}-e^{-i\phi(x_0)})$$
thanks to $(K^0(f))'(x)=-\phi'(x)\int_{x_0}^x M(y)f(y)e^{-2i(\phi(x)-\phi(y))}dy$ and to $v^{<}(x_0)=b(x_0)e^{i\phi(x_0)}, (v^{<})'(0)=b'(x_0)e^{i\phi(x_0)}+i\phi'(x_0)b(x_0)e^{i\phi(x_0)}$.\\ As there is no value of $R_{x_0}$ such that $(S^{>}v^{<})(x_0)=(S^{>}v^{<})'(x_0)=0$,  $S^{>}v^{<}$ is a nonzero element of ${\mathcal U}^{>}_{+ \infty}$.

%
%

Now consider a nonzero element $u$ of ${\mathcal U}^{>}_{+ \infty}$. There exists a sequence of functions $u(x;\sigma), \sigma<0$ solution of (\ref{eq:model-sigma}) with $u(x,\sigma)\rightarrow u(x)$ for $\sigma\rightarrow 0$ and $u(x, \sigma)\rightarrow 0$ when $x\rightarrow +\infty$.

The latter condition implies $u(x,\sigma)$ bounded on $[0, +\infty)$. Thanks to Proposition \ref{teo:Vseries}, one has, under the condition $M_{\infty}^{x_0}<2$
$$u(x;\sigma)=A_{\sigma}(x_0)(1+\sum_{j=1}^{\infty}s^{>}_{\sigma,j}(x))v^{<}(x;\sigma).$$
Let us now show that $\lim_{\sigma \to 0^{-}} A_{\sigma}(x_0)$ exists and is nonzero. As $u$ is nonzero, there exists $X_0>0$ such that $u(X_0)\not=0$.

We had observed that $S^{>}v^{<}$ is nonzero. Hence there exists $X_1>0$ such that $(S^{>}v^{<})(X_1)\not=0$ and there exists $\sigma_*$ such that $\vert (S^{>}_{\sigma}v^{<}_{\sigma})(X_1)\vert>\frac{\vert (S^{>}v^{<})(X_1)\vert }{2}>0$ and $|u(X_1;\sigma)| < 2 |u(X_1)|$ for all $\sigma_*<\sigma<0$. Hence
$$\vert A_{\sigma}(x_0)\vert = \vert \frac{u(X_1,\sigma)}{(S^{>}_{\sigma}v^{<}_{\sigma})(X_1)}\vert \leq \frac{2}{\vert (S^{>}v^{<})(X_1)\vert } 2 |u(X_1)|,
$$
which shows that $|A_{\sigma}(x_0)| \leq C < \infty$. Therefore, there exists a subsequence $\sigma_n \to 0^{-}$ such that $\lim_{n \to \infty} A^{\sigma_n}(x_0)$ exists. This limit cannot be zero, because then $u(x,\sigma)\rightarrow 0$ for each $x$, which is in contradiction with the fact that $u(X_0)\not=0$.\\
We thus conclude that $A_{\sigma}(x_0)$ has a nonzero limit, that we call $A_*$. This proves that $u(x)=A_*S^{>}(x)v^{<}(x)$ for all $x$. The first item of Proposition \ref{teo:S0} is proven. The estimates of the two other items are easy consequences of the inequalities of Proposition \ref{teo:Vseries}. Note that this result is equivalent to the general result for $u$ belonging to ${\mathcal U}^{>}_{+\infty}$  stated  in Corollary  \ref{cod}. It is equivalent to finding solutions $u$ of (\ref{eq:model-problem}) such that $b^{-1}(x)u(x)e^{-i(1+i\sigma)\phi(x)}\rightarrow 0$ when $x\rightarrow +\infty$.

\end{proof}

 We may notice that this expansion is similar to the Bremmer coupling series for the one-way operators, see \cite{3} for example.
 
 \begin{proof}[Proof of Theorem \ref{teo:behavior-at-infinity}]
 Consider $S^>$ given by (\ref{eq:S>}) as before. As $u$ is an outgoing solution, there exists a constant $A_*$ such that $u=A_*S^{>}v^{<}$. The functions $A$ and $B$ introduced in the proof of Proposition \ref{teo:S0} above satisfy $u=A_*(Av^{>}+Bv^{<}), u'=A_*(A(v^{>})'+B(v^{<})')$. Moreover, $B(x_0)=\frac{1}{2i}\int_{x_0}^{+\infty}M(y)S^{>}(y)dy=R_{x_0}$ and $A(x_0)=1$. This leads to the desired equality
 $$\frac{u'(x_0)}{u(x_0)}=\frac{A(v^{>})'+B(v^{<})'}{Av^{>}+Bv^{<}}(x_0)=\frac{(v^>)'(x_0)+R_{x_0}(v^<)'(x_0)}{v^>(x_0)+R_{x_0}v^<(x_0)}.$$
 Reciprocally, assume that $u$ is a solution of (\ref{eq:model-problem}) satisfying
 $$\frac{u'(x_0)}{u(x_0)}=\frac{(v^>)'(x_0)+R_{x_0}(v^<)'(x_0)}{v^>(x_0)+R_{x_0}v^<(x_0)}.$$
 Introduce 
 $$u_*=\frac{u(x_0)}{S^{>}(x_0)v^{>}(x_0)}S^{>}v^{>}.$$
 As $S^{>}v^{>}$ is an outgoing solution at $+\infty$, so is $u_*$. Hence $\frac{u'_*(x_0)}{u_*(x_0)}=\frac{(v^>)'(x_0)+R_{x_0}(v^<)'(x_0)}{v^>(x_0)+R_{x_0}v^<(x_0)}$. Using $u_*(x_0)=u(x_0)$, one deduces $u'_*(x_0)=u'(x_0)$. As $u_*$ and $u$ are solutions of (\ref{eq:model-problem}), by uniqueness $u=u_*$ and $u$ is outgoing at $+\infty$. We have the characterization of $u$ outgoing at $+\infty$ through the condition at $x_0$ of Theorem \ref{teo:behavior-at-infinity}.
  \end{proof}
  
\section{Expression of the reflection coefficient}
\label{sec-limit-R}

\subsection{General formula for $\alpha > 0$}

Introduce $u\in C^1([0, +\infty[)$ a solution of (\ref{eq:model-problem}), and define $a^{>},a^{<}$ the functions such that
\begin{equation}
\label{u-0}
\left\{\begin{array}{l}u(x)=a^{>}(x)w^>(x)+a^{<}(x)w^<(x), x\in [0,x_0[\cr
(a^{>})'(x)w^>(x)+(a^{<})'(x)w^<(x)=0, x\in [0,x_0[
\end{array}\right.
\end{equation}
Introduce

\begin{equation}
\label{value:k}\begin{array}{ll}k\!\!&=e^{-2i\phi(x_0)}\frac{1-ibb'(x_0)+\frac{R_{x_0}e^{i\phi(x_0)}-e^{-i\phi(x_0)}}{R_{x_0}e^{i\phi(x_0)}+e^{-i\phi(x_0)}}}{1+ibb'(x_0)-\frac{R_{x_0}e^{i\phi(x_0)}-e^{-i\phi(x_0)}}{R_{x_0}e^{i\phi(x_0)}+e^{-i\phi(x_0)}}}\cr
&=e^{-2i\phi(x_0)}\frac{(2-ibb'(x_0))R_{x_0}-ibb'(x_0)e^{-2i\phi(x_0)}}{ibb'(x_0)R_{x_0}+(2+ibb'(x_0)e^{-2i\phi(x_0))}}\end{array}
\end{equation}
\begin{proposition}
\label{p:outgoing} A function $u$, solution of (\ref{eq:model-problem}), belongs to $\mathcal{U}_{+\infty}^{>}$ if and only if $a^{<}(x_0)=k \, a^{>}(x_0)$.
\end{proposition}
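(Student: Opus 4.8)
The plan is to deduce the claim directly from the characterization of $\mathcal{U}^{>}_{+\infty}$ already established in Theorem \ref{teo:behavior-at-infinity}, and then to carry out a purely algebraic translation of the radiation condition at $x_0$ from the basis $(v^{>},v^{<})$ into the basis $(w^{>},w^{<})$ used in (\ref{u-0}). Recall that Theorem \ref{teo:behavior-at-infinity} says that a nonzero solution $u$ lies in $\mathcal{U}^{>}_{+\infty}$ if and only if
\[
\frac{u'(x_0)}{u(x_0)}=\frac{(v^>)'(x_0)+R_{x_0}(v^<)'(x_0)}{v^>(x_0)+R_{x_0}v^<(x_0)},
\]
and that for outgoing solutions $u(x_0)\neq 0$ (the nonvanishing is guaranteed for $\theta<\theta_0(x_0)$ by the contraction argument behind Proposition \ref{teo:S0}, so the quotient is legitimate). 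The whole proof then amounts to showing that this single scalar condition on $u'(x_0)/u(x_0)$ is equivalent to $a^{<}(x_0)=k\,a^{>}(x_0)$ with $k$ as in (\ref{value:k}); here $a^{>}(x_0),a^{<}(x_0)$ are understood as the (continuous) limits at $x_0$, which are determined by $(u(x_0),u'(x_0))$ through the invertible Wronskian matrix of $w^{>},w^{<}$.

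First I would express $u'(x_0)/u(x_0)$ in terms of $a^{>}(x_0)$ and $a^{<}(x_0)$. The side condition $(a^{>})'w^{>}+(a^{<})'w^{<}=0$ in (\ref{u-0}) gives, exactly as in variation of parameters, $u'=a^{>}(w^{>})'+a^{<}(w^{<})'$; using $(w^{>})'=-i\phi'w^{>}$, $(w^{<})'=i\phi'w^{<}$ and $w^{>}(x_0)=e^{-i\phi(x_0)}$, $w^{<}(x_0)=e^{i\phi(x_0)}$, one obtains
\[
\frac{u'(x_0)}{u(x_0)} = i\phi'(x_0)\,\frac{-a^{>}(x_0)e^{-i\phi(x_0)}+a^{<}(x_0)e^{i\phi(x_0)}}{a^{>}(x_0)e^{-i\phi(x_0)}+a^{<}(x_0)e^{i\phi(x_0)}}.
\]
This exhibits $u'(x_0)/u(x_0)$ as a M\"obius function of the ratio $a^{<}(x_0)/a^{>}(x_0)$, a genuine bijection of the Riemann sphere, which is the structural fact that makes the equivalence clean.

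Next I would rewrite the right-hand side of the Theorem \ref{teo:behavior-at-infinity} condition. Substituting $v^{>}=b\,e^{-i\phi}$, $v^{<}=b\,e^{i\phi}$, $(v^{>})'=(b'-i\phi'b)e^{-i\phi}$, $(v^{<})'=(b'+i\phi'b)e^{i\phi}$ and factoring out $b$, the quotient collapses to
\[
\frac{b'(x_0)}{b(x_0)} + i\phi'(x_0)\,\frac{R_{x_0}e^{i\phi(x_0)}-e^{-i\phi(x_0)}}{R_{x_0}e^{i\phi(x_0)}+e^{-i\phi(x_0)}}.
\]
Equating the two expressions, dividing through by $\phi'(x_0)$, and using $\phi'(x_0)=c^{-1}(x_0)=b^{-2}(x_0)$ so that $b'(x_0)/b(x_0)=b\,b'(x_0)\,\phi'(x_0)$, turns the radiation condition into a single linear equation for $k:=a^{<}(x_0)/a^{>}(x_0)$ whose coefficients involve only $b\,b'(x_0)$, $e^{\pm i\phi(x_0)}$ and $R_{x_0}$. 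Solving it gives at once the first displayed expression for $k$ in (\ref{value:k}); substituting the definition of the inner fraction $\frac{R_{x_0}e^{i\phi(x_0)}-e^{-i\phi(x_0)}}{R_{x_0}e^{i\phi(x_0)}+e^{-i\phi(x_0)}}$ and simplifying then produces the second displayed form. Since the M\"obius map is a bijection and the resulting $k$ is finite, this computation is reversible, so $a^{<}(x_0)=k\,a^{>}(x_0)$ forces the Theorem \ref{teo:behavior-at-infinity} condition and hence $u\in\mathcal{U}^{>}_{+\infty}$, and conversely.

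I expect the only real friction to be bookkeeping rather than anything conceptual: carefully carrying the factors $e^{\pm i\phi(x_0)}$ and $b\,b'(x_0)$ through the cross-multiplication so that the answer matches (\ref{value:k}) on the nose, and verifying that the degenerate branches (the case $a^{>}(x_0)=0$, or a vanishing denominator $R_{x_0}e^{i\phi(x_0)}+e^{-i\phi(x_0)}=0$) do not actually occur for outgoing solutions. This last point needs no separate case analysis: the bijectivity of the M\"obius transformation together with the finiteness of $k$ shows that an outgoing solution automatically has $a^{>}(x_0)\neq 0$.
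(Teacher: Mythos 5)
Your proposal is correct and follows essentially the same route as the paper: both characterize membership in $\mathcal{U}^{>}_{+\infty}$ via the radiation condition of Theorem \ref{teo:behavior-at-infinity} at $x_0$, express $u'(x_0)/u(x_0)$ as a M\"obius function of $k=a^{<}(x_0)/a^{>}(x_0)$ using the side condition in (\ref{u-0}), and solve the resulting equation to recover (\ref{value:k}), with the bijectivity of the M\"obius map giving the equivalence in both directions. If anything, your write-up is slightly more careful than the paper's, which abbreviates the right-hand side of the radiation condition as $i\frac{R_{x_0}-1}{R_{x_0}+1}$ even though the $ibb'(x_0)$ terms in (\ref{value:k}) only arise from the full expression involving $(v^{>})'$ and $(v^{<})'$ that you carry through explicitly.
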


\begin{proof}
Recall that $u\in \mathcal{U}_{+\infty}^{>}$ if and only if $\frac{u'(x_0)}{u(x_0)}=i\frac{R_{x_0}-1}{R_{x_0}+1}$. As all continuous on $[0, +\infty[$ solutions of (\ref{eq:model-problem}) belong to $C^1$, thanks to $\alpha>0$, equalities (\ref{u-0}) yield
$$\frac{u'(x_0)}{u(x_0)}=-i\phi'(x_0)\frac{1-ke^{2i\phi(x_0)}}{1+ke^{2i\phi(x_0)}}=i\frac{R_{x_0}-1}{R_{x_0}+1},$$
hence the relation $a^{<}(x_0)=k \, a^{>}(x_0)$. 
\end{proof}
%
%
%

The behavior of $R_{x_0}$, which is one of the key points of the estimate of the reflection coefficient $R$ , is given by the following proposition.
\begin{proposition}
\label{p:Rx0-principal}
The limit of $\frac{R_{x_0}}{\theta}$ when $\theta\rightarrow 0$ is,
for $\alpha\geq 1$,
$$\frac{1}{2i}[e^{-2ix_0}\sum_{p=2}^{[\alpha]+1}\frac{d^p}{dy^p}(\frac{y^{\alpha}}{(2i)^{p+1}})+\int_{x_0}^{+\infty}\frac{e^{-2iy}}{(2i)^{[\alpha]+2}}\frac{d^{[\alpha]+2}}{dy^{[\alpha]+2}}(y^{\alpha})dy],$$
and, for $0<\alpha<1$,
$$\frac{\alpha(\alpha -1)}{(2i)^3}\int_{x_0}^{+\infty}y^{\alpha-2}e^{-2iy}dy.$$
\end{proposition}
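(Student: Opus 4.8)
The plan is to start from the exact formula $R_{x_0}=\frac{1}{2i}\int_{x_0}^{+\infty}M(x)S^{>}(x)\,dx$ of (\ref{Rx0}) and extract its leading order in $\theta$ in two stages: first replace $S^{>}$ by the leading term of its Volterra series, then analyze the resulting oscillatory integral of $M$ against $e^{-2i\phi}$, splitting on the integrability of the limiting density at $+\infty$.

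First I would reduce $S^{>}$ to its zeroth-order term. By Proposition \ref{teo:S0} and the bounds of Proposition \ref{teo:Vseries}, $S^{>}(x)e^{2i\phi(x)}=1+\sum_{j\geq1}s^{>}_{j}(x)$ with $\|\sum_{j\geq1}s^{>}_{j}\|_{L^\infty}\leq \frac{M_{x_0}/2}{1-M_{x_0}/2}=O(\theta)$, since $M_{x_0}=\int_{x_0}^\infty|M|=O(\theta)$ (Appendix \ref{sec:appendixB}). Writing $S^{>}=e^{-2i\phi}+e^{-2i\phi}\,(S^{>}e^{2i\phi}-1)$ and using $\int_{x_0}^\infty|M|=O(\theta)$ once more, the second piece contributes $O(\theta^2)$ to $R_{x_0}$. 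Hence $\frac{R_{x_0}}{\theta}=\frac{1}{2i\theta}\int_{x_0}^{+\infty}M(x)e^{-2i\phi(x)}\,dx+O(\theta)$, and the desired limit is that of the right-hand integral.

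Next I would insert the explicit form of $M$. For $x>0$, $c^{-2}=1+\theta x^\alpha$ gives, after differentiating $c^{1/2}=(1+\theta x^\alpha)^{-1/4}$ twice, $M(x)=-\frac{\theta}{4}\frac{d^2}{dx^2}(x^\alpha)\,(1+\theta x^\alpha)^{-3/2}+\frac{5\alpha^2}{16}\theta^2 x^{2\alpha-2}(1+\theta x^\alpha)^{-5/2}$, so that $\frac{M(x)}{\theta}\to-\frac14\frac{d^2}{dx^2}(x^\alpha)=-\frac{\alpha(\alpha-1)}{4}x^{\alpha-2}$ pointwise, while $\phi(x)=\int_0^x(1+\theta y^\alpha)^{1/2}dy\to x$. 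The case split is dictated by whether $x^{\alpha-2}$ is integrable at $+\infty$. For $0<\alpha<1$ it is, and both displayed pieces of $M/\theta$ are dominated on $[x_0,\infty)$ by a $\theta$-independent multiple of $x^{\alpha-2}$ (for the second, using $t(1+t)^{-5/2}\leq\text{const}$ with $t=\theta x^\alpha$); dominated convergence then gives directly $\lim \frac{R_{x_0}}{\theta}=\frac{1}{2i}\int_{x_0}^\infty(-\frac{\alpha(\alpha-1)}{4})x^{\alpha-2}e^{-2ix}dx=\frac{\alpha(\alpha-1)}{(2i)^3}\int_{x_0}^\infty x^{\alpha-2}e^{-2ix}dx$, which is the stated formula.

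For $\alpha\geq1$ the limiting density $x^{\alpha-2}$ is no longer integrable at $+\infty$, so I would integrate by parts before passing to the limit, keeping $\theta>0$ fixed throughout so that the exact integrand genuinely decays (one checks $M(x)\sim \theta^{-1/2}x^{-\alpha/2-2}$) and all boundary terms at $+\infty$ vanish. Each integration by parts against $e^{-2i\phi}$, via $\frac{d}{dx}e^{-2i\phi}=-2i\phi'e^{-2i\phi}$ with $\phi'=(1+\theta x^\alpha)^{1/2}$, transfers one derivative onto the power-type factor, contributes a factor $\frac{1}{-2i\phi'}$ and a boundary term at $x_0$. Performing $[\alpha]$ such steps moves $\frac{d^2}{dx^2}(x^\alpha)$ to $\frac{d^{[\alpha]+2}}{dx^{[\alpha]+2}}(x^\alpha)\sim x^{\alpha-[\alpha]-2}$ with exponent in $[-2,-1)$, hence integrable at $+\infty$. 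Letting $\theta\to0$ in each of the $[\alpha]$ boundary terms (where $\phi\to x$, $\phi'\to1$, and $M/\theta$ together with its derivatives converge to those of $-\frac14\frac{d^2}{dx^2}(x^\alpha)$) yields $e^{-2ix_0}\sum_{p=2}^{[\alpha]+1}\frac{1}{(2i)^{p+1}}\frac{d^p}{dy^p}(y^\alpha)\big|_{x_0}$, while dominated convergence on the final, now absolutely convergent, integral yields $\int_{x_0}^\infty\frac{e^{-2iy}}{(2i)^{[\alpha]+2}}\frac{d^{[\alpha]+2}}{dy^{[\alpha]+2}}(y^\alpha)\,dy$; the overall prefactor $\frac{1}{2i}$ and the bookkeeping of $(2i)$-powers reproduce exactly the claimed expression (consistently vanishing at $\alpha=1$, where the surviving derivatives of $y^\alpha$ are zero). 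The main obstacle is precisely this $\alpha\geq1$ case: the formal integration by parts of the $\theta=0$ integrand would produce divergent boundary terms at infinity whenever $\alpha-p\geq0$, so one must legitimize it by carrying it out at finite $\theta$ on the genuinely decaying exact integrand and only interchange the limit $\theta\to0$ with the integral after enough derivatives have been transferred to restore absolute integrability. Controlling the convergence of the transferred derivatives of $M/\theta$ and of the phase $\phi\to x$ uniformly, and verifying that the subleading $\theta^2$-piece of $M$ and the $S^{>}$-correction stay negligible under this procedure, is where the real work lies.
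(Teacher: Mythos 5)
Your strategy for $0<\alpha<1$, and your treatment of the leading term for $\alpha\geq 1$ (integrate by parts at fixed $\theta>0$ against $e^{-2i\phi}$, transferring derivatives until the limiting density becomes integrable at $+\infty$, then apply dominated convergence and collect boundary terms at $x_0$), is exactly the paper's route: the operator your integration by parts generates is the paper's $\mathcal{L}(f)=(cf)'$, and your bookkeeping reproduces formula (\ref{eq:formula-ipp}).

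However, there is a genuine gap at your very first reduction, and it is precisely the point on which the paper spends most of its technical effort. You claim $M_{x_0}=\int_{x_0}^\infty|M|\,dx=O(\theta)$, so that replacing $S^{>}$ by $e^{-2i\phi}$ costs only $O(M_{x_0}^2)=O(\theta^2)$ in $R_{x_0}$. This is false for $\alpha>1$: by Lemma \ref{bound-Mx0} the correct bound is $M_{x_0}=O(\theta^{1/\alpha})$, and it is sharp (see Lemma \ref{l:A1bis}); indeed your own remark that $M(x)\sim\theta^{-1/2}x^{-\alpha/2-2}$ for large $x$ shows the mass of $|M|$ lives on $x\lesssim\theta^{-1/\alpha}$, where $\int\theta x^{\alpha-2}dx\sim\theta^{1/\alpha}$. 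Consequently the discarded piece $\frac{1}{2i}\int_{x_0}^\infty M\,e^{-2i\phi}\bigl(\sum_{j\geq1}s^{>}_{j}\bigr)$ is only controlled by $O(\theta^{2/\alpha})$ through size estimates: this is $o(\theta)$ when $1<\alpha<2$, comparable to $\theta$ when $\alpha=2$, and much larger than $\theta$ when $\alpha>2$. So for $\alpha\geq2$ your reduction to the single integral $\frac{1}{2i\theta}\int_{x_0}^\infty M e^{-2i\phi}$ does not follow at all from $L^\infty$ bounds. The paper's fix is structural: it retains the terms $s^{>}_{j}$ for $0\leq j\leq j_0-1$ (with $j_0<\alpha<j_0+1$), discarding only the tail of the Volterra series (Lemma \ref{estimate-remainder2}), and then proves in Proposition \ref{p:all-terms} --- using the same integration-by-parts machinery applied \emph{inside} each oscillatory integral $\int M s^{>}_{j} e^{-2i\phi}$, together with the structure result of Lemma \ref{l:A3} for $c\,\frac{ds^{>}_{j}}{dx}$ --- that each $j\geq1$ term is $o(\theta)$ because of oscillation, not because of its size. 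Without this (or some substitute exploiting cancellation in the correction terms), your argument establishes the proposition only for $0<\alpha<2$. A secondary point: for $\alpha\in\N$ your phrase ``exponent in $[-2,-1)$'' silently excludes integers, where $\frac{d^{[\alpha]+2}}{dy^{[\alpha]+2}}(y^{\alpha})=0$; the paper treats this case separately, showing $\mathcal{L}^{\alpha}(f)=x^{-2}Q(\theta x^{\alpha})$ with $Q(0)=0$, so that in the limit only the boundary terms at $x_0$ survive.
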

The proof of Proposition \ref{p:Rx0-principal} is the content of Section \ref{proof:Rx_0-principal}.

We now have all the ingredients to complete the proof of Theorem \ref{teo:main} with the rough estimate $o(\theta)$ for the remainder.

As $R_{x_0}=O(\theta)$ (from Proposition \ref{p:Rx0-principal}), it is straightforward, using $b'(x_0)=O(\theta)$ and $\phi(x_0)=1+O(\theta)$,  and the expression of $k$ given by (\ref{value:k}), to deduce
\begin{equation}
\label{estimate-k}
k=R_{x_0}+\frac{bb'(x_0)}{2i}+O(\theta^2).
\end{equation}
With this expression for $k$, the solution of the Cauchy problem on $[0, x_0[$ with Cauchy data at $x=x_0$, yields the leading-order behavior of the coefficient $R$.
\begin{proposition}
\label{Cauchy-0}
\begin{enumerate}
\item If $u$ is solution of (\ref{eq:model-problem}), then  $\left(\ba{c}a^{>}\cr
a^{<}\ea\right)$ defined through (\ref{u-0}) solves
$$\left(\ba{c}a^{>}\cr
a^{<}\ea\right)(y)=\left(\ba{c}a^{>}\cr
a^{<}\ea\right)(x_0)+\mathcal{V}(\left(\ba{c}a^{>}\cr
a^{<}\ea\right))(y), \forall y \in [0, x_0]$$
the matricial Volterra operator $\mathcal{V}$ being defined by (\ref{Volterra-V}).
\item There exists $\theta_1>0$ such that, for all $\theta<\theta_1$, one has the equality
$$R=\frac{[(Id-\mathcal{V})^{-1}(0)\left(\ba{c}1\cr
k\ea\right)]_{2}}{[(Id-\mathcal{V})^{-1}(0)\left(\ba{c}1\cr
k\ea\right)]_1}.$$
(where $[ \cdot ]_j$ denotes the $j$-th component.)
\item  For $\theta<\mbox{min}(\theta_0(x_0),\theta_1)$, one obtains
\begin{equation}
R=R_{x_0}+\frac{bb'(x_0)}{2i}e^{-2i\phi(x_0)}+\int_0^{x_0}\frac{b'}{b}(y)e^{-2i\phi(y)}dy+O(\theta^2).
\label{R:equality}
\end{equation}
\end{enumerate}

\end{proposition}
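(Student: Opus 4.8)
The plan is to establish the three items in sequence, all built on the variation-of-parameters representation (\ref{u-0}). For item (1), I would differentiate the first relation in (\ref{u-0}) and use the constraint (its second line) to reduce $u'$ to $a^{>}(w^{>})' + a^{<}(w^{<})'$; differentiating once more and inserting into (\ref{eq:model-problem}) makes the $c^{-2}$ terms cancel, because $(w^{>})'' = -i\phi'' w^{>} - c^{-2} w^{>}$ and $(w^{<})'' = i\phi'' w^{<} - c^{-2} w^{<}$ (using $(\phi')^2 = c^{-2}$). What remains, together with the constraint, is a $2\times2$ algebraic system for $((a^{>})',(a^{<})')$. Solving it and using the WKB identity $\phi'' + 2(c^{1/2})'\phi' = 0$ in the form $\phi''/(2\phi') = -b'/b$, together with $w^{<}/w^{>} = e^{2i\phi}$, gives
\[
(a^{>})' = \tfrac{b'}{b}\bigl(a^{>} - a^{<} e^{2i\phi}\bigr), \qquad (a^{<})' = \tfrac{b'}{b}\bigl(a^{<} - a^{>} e^{-2i\phi}\bigr).
\]
Integrating from $x_0$ turns this into the matricial Volterra equation of item (1), with inhomogeneity $(a^{>}(x_0),a^{<}(x_0))^{\top}$ and integral operator $\mathcal V$ of (\ref{Volterra-V}).

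For item (2), I would impose the outgoing condition via Proposition \ref{p:outgoing}, which forces $a^{<}(x_0) = k\,a^{>}(x_0)$; normalizing $a^{>}(x_0)=1$ makes the inhomogeneity the constant vector $(1,k)^{\top}$. The kernel of $\mathcal V$ is proportional to $b'/b$, and since $b=(1+\theta x^{\alpha})^{-1/4}$ gives $b'/b = -\tfrac{\alpha\theta}{4}\,x^{\alpha-1}/(1+\theta x^{\alpha})$, one has $\int_0^{x_0}|b'/b|\,dx = O(\theta)$ for every $\alpha>0$ (the singularity at $x=0$ for $\alpha<1$ being integrable). Hence $\mathcal V$ is a contraction on $C^0([0,x_0];\CC^2)$ once $\theta<\theta_1$, so $\mathrm{Id}-\mathcal V$ is boundedly invertible and the solution is $(\mathrm{Id}-\mathcal V)^{-1}(1,k)^{\top}$. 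To extract $R$, I would evaluate (\ref{u-0}) at $x=0$: since $\phi(0)=0$ and $c\equiv1$ for $x<0$, we get $u(0)=a^{>}(0)+a^{<}(0)$ and $u'(0)=i\,(a^{<}(0)-a^{>}(0))$, so comparison with (\ref{eq:qR}) yields $R = a^{<}(0)/a^{>}(0)$, which is exactly the quotient in item (2).

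For item (3), I would expand $(\mathrm{Id}-\mathcal V)^{-1} = \sum_{n\geq0}\mathcal V^n$ and, since $\|\mathcal V\| = O(\theta)$, keep only $n=0,1$ with an $O(\theta^2)$ tail. Evaluating $(\mathrm{Id}+\mathcal V)(1,k)^{\top}$ at $x=0$ gives
\[
a^{>}(0) = 1 - \int_0^{x_0}\tfrac{b'}{b}\,dy + O(\theta^2), \qquad a^{<}(0) = k + \int_0^{x_0}\tfrac{b'}{b}\,e^{-2i\phi}\,dy + O(\theta^2),
\]
the $k$-weighted cross terms being $O(\theta^2)$ because $k=O(\theta)$. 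As $a^{>}(0)=1+O(\theta)$, the quotient simplifies to $R = k + \int_0^{x_0}(b'/b)e^{-2i\phi}\,dy + O(\theta^2)$, and substituting the small-$\theta$ expansion of the exact $k$ from (\ref{value:k}), namely $k = R_{x_0} + \tfrac{bb'(x_0)}{2i}e^{-2i\phi(x_0)} + O(\theta^2)$ (the refinement of (\ref{estimate-k})), produces (\ref{R:equality}).

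The main obstacle is the uniform control near $x=0$ in the range $0<\alpha<1$: there $M$ is not integrable at the origin and $b'/b$ is singular, which is precisely why the argument routes through the Cauchy problem on $[0,x_0]$ rather than letting $x_0\to0$ in $R_{x_0}$. The delicate points are verifying that $\int_0^{x_0}(b'/b)e^{-2i\phi}$ converges and is $O(\theta)$, that $\mathcal V$ genuinely contracts, and that every discarded term is truly $O(\theta^2)$ — in particular faithfully tracking the phase factors $e^{\pm2i\phi}$, since the factor $e^{-2i\phi(x_0)}$ on the $bb'(x_0)$ term is exactly what makes the final expression (\ref{R:equality}) come out correctly.
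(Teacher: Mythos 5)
Your proof is correct and follows essentially the same route as the paper: variation of parameters for $(a^{>},a^{<})$ leading to the Volterra system (\ref{Volterra-V}), contraction of $\mathcal{V}$ for small $\theta$ (your bound $\int_0^{x_0}|b'/b|\,dx=O(\theta)$ is the same kernel estimate, since $b'/b=-\phi''/(2\phi')$) combined with Proposition \ref{p:outgoing} and $R=a^{<}(0)/a^{>}(0)$ for item 2, and a first-order Neumann expansion plus the small-$\theta$ expansion of $k$ for item 3. One remark in your favor: the expansion $k=R_{x_0}+\frac{bb'(x_0)}{2i}e^{-2i\phi(x_0)}+O(\theta^2)$ that you use is the correct one --- the paper's displayed formula (\ref{estimate-k}) omits the phase factor $e^{-2i\phi(x_0)}$ (an $O(\theta)$ discrepancy, not absorbable in the error term), but its final substitution producing (\ref{R:equality}) uses the phase factor exactly as you do.
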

\begin{proof}
Item 1):

Consider $u$ given by (\ref{u-0}), with $(a^{>})'e^{-i\phi(x)}+(a^{<})'e^{i\phi(x)}=0$. Equation (\ref{eq:model-problem}) yields
$$\left(\ba{c}a^{>}\cr
a^{<}\ea\right)'= \frac{\phi''(x)}{2\phi'(x)}\left(\ba{cc}i\phi'(x)e^{i\phi(x)}&-e^{i\phi(x)}\cr
i\phi'(x)e^{-i\phi(x)}&e^{-i\phi(x)}\ea\right)\left(\ba{cc}0&0\cr
e^{-i\phi(x)}&-e^{i\phi(x)}\ea\right)\left(\ba{c}a^{>}\cr
a^{<}\ea\right),$$
which implies
\begin{equation}\label{Volterra-V}\begin{array}{ll} \left(\ba{c}a^{>}\cr
a^{<}\ea\right)(x)&=\left(\ba{c}a^{>}\cr
a^{<}\ea\right)(x_0)+\int_{x_0}^x\frac{\phi''(y)}{2\phi'(y)}\left(\ba{cc}-1&e^{2i\phi(y)}\cr
e^{-2i\phi(y)}&-1\ea\right)\left(\ba{c}a^{>}\cr
a^{<}\ea\right)(y)dy\cr
&:=\left(\ba{c}a^{>}\cr
a^{<}\ea\right)(x_0)+\mathcal{V}(\left(\ba{c}a^{>}\cr
a^{<}\ea\right))(x),\end{array}\end{equation}
this equality defining the matricial Volterra operator $\mathcal{V}$. 

Item 2) As $\frac{\phi''(x)}{\phi'(x)}\in L^1([0, x_0])$, the kernel of this integral operator belongs to $L^1([0, x_0])$. Observe that, for all $x\in [0, x_0]$, 
$$\vert \mathcal{V}(f)(x)\vert \leq \int_{x}^{x_0}\vert \frac{\phi''(y)}{\phi'(y)}\vert dy \vert \vert f\vert\vert_{L^{\infty}[0, x_0]}$$
As $\phi'=c^{-1}$ and $c$ is decreasing, $\phi'$ is increasing and positive, hence 
$$\vert \mathcal{V}(f)(x)\vert \leq \ln\frac{\phi'(x_0)}{\phi'(x)} \vert \vert f\vert\vert_{L^{\infty}[0, x_0]}.$$
There exist a constant $C(x_0)$, independent on $\theta$, such that
$$\ln \phi'(x_0)\leq C(x_0)\theta$$
The map $U\rightarrow S+\mathcal{V}(U)$ is thus a contraction for $\theta<\theta_1$, $\theta_1$ small enough, hence has a unique fixed point, which yields the unique solution $ \left(\ba{c}a^{>}\cr
a^{<}\ea\right)$ of (\ref{Volterra-V}) which is $$ \left(\ba{c}a^{>}\cr
a^{<}\ea\right)(x)=(Id-\mathcal{V})^{-1}(y)\left(\ba{c}a^{>}\cr
a^{<}\ea\right)(x_0).$$
Proposition \ref{p:outgoing} yields $\left(\ba{c}a^{>}\cr
a^{<}\ea\right)(x_0)=a^{>}(x_0)\left(\ba{c}1\cr
k\ea\right)$. 

Let $r(x)=\left(\ba{c}a^{>}\cr
a^{<}\ea\right)(x)-a^{>}(x_0)[\left(\ba{c}1\cr
k\ea\right)\mathcal{V}(\left(\ba{c}1\cr
k\ea\right))(x)].$ It satisfies
$$r(x)=a^{>}(x_0)\mathcal{V}^2(\left(\ba{c}1\cr
k\ea\right))(x)+\mathcal{V}(r)(x).$$
As $\mathcal{V}$ is a contraction for $\theta<\theta_1$, here exists $c$ such that $\vert \vert r\vert\vert_{(L^{\infty}[0, x_0])^2}\leq c \, \theta^2$.

From (\ref{coeffAB}) and the relations $u(0)=a^>(0)+a^{<}(0), u'(0)=ia^{>}(0)R-ia^{>}(0)$, one deduces that $R=\frac{a^{<}}{a^{>}}(0)$. From $$ \left(\ba{c}a^{>}\cr
a^{<}\ea\right)(0)=(Id-\mathcal{V})^{-1}(0)\left(\ba{c}a^{>}\cr
a^{<}\ea\right)(x_0)=a^{>}(x_0)(Id-\mathcal{V})^{-1}(0)\left(\ba{c}1\cr
k\ea\right)(x_0),$$
one obtains the reflection coefficient $R$ thanks to $a^{>}(0)\not=0$, hence proving item 2) of Proposition \ref{Cauchy-0}. 

Item 3)
Use $k=O(\theta)$, one obtains
$$\left(\ba{c}a^{>}\cr
a^{<}\ea\right)(0)=a^{>}(x_0)[\left(\ba{c}1\cr
k\ea\right)+\mathcal{V}(\left(\ba{c}1\cr
0\ea\right))(x)]+O(\theta^2).$$
As
$$\mathcal{V}(\left(\ba{c}1\cr
0\ea\right))(0)=\int_{x_0}^0\frac{\phi''(x)}{2\phi'(x)}\left(\ba{cc}-1&e^{2i\phi(x)}\cr
e^{-2i\phi(x)}&-1\ea\right)\left(\ba{c}1\cr
0\ea\right)dx=\left(\ba{c}-\int_{x_0}^0\frac{\phi''(x)}{2\phi'(x)}dx\cr
\int_{x_0}^0\frac{\phi''(x)}{2\phi'(x)}e^{-2i\phi(x)}dx\ea\right),$$
using $b^2\phi'=1$ and $\frac{\phi''}{\phi'}=O(\theta)$, one obtains:
$$\mathcal{V}(\left(\ba{c}1\cr
0\ea\right))(0)=\left(\ba{c}O(\theta)\cr
\int_{x_0}^0\frac{\phi''(x)}{2\phi'(x)}e^{-2i\phi(x)}dx\ea\right)=\left(\ba{c}O(\theta)\cr
\int^{x_0}_0\frac{b'(x)}{b(x)}e^{-2i\phi(x)}dx\ea\right).$$
The first component of $[\left(\ba{c}1\cr
k\ea\right)+\mathcal{V}(\left(\ba{c}1\cr
0\ea\right))(0)]$ is $1+O(\theta)$ while the second component is $k+\int^{x_0}_0\frac{b'(x)}{b(x)}e^{-2i\phi(x)}dx$,
from which one deduces
$$R=k+\int^{x_0}_0\frac{b'}{b}(x)e^{-2i\phi(x)}dx+O(\theta^2)=R_{x_0}+\frac{bb'(x_0)}{2i}e^{-2i\phi(x_0)}+\int^{x_0}_0\frac{b'}{b}(x)e^{-2i\phi(x)}dx+O(\theta^2).$$
\end{proof}
\paragraph{Proof of Theorem \ref{teo:main}}
From (\ref{R:equality}), one deduces
$$\frac{R}{\theta}=\frac{R_{x_0}}{\theta}+\frac{\alpha x_0^{\alpha-1}}{(2i)^3}e^{-2ix_0}-\int_0^{x_0}\frac{\alpha y^{\alpha-1}}{4}e^{-2iy}dy+O(\theta)$$
From Proposition \ref{p:Rx0-principal}, one has
$$\frac{R}{\theta}=\frac{1}{2i}[e^{-2ix_0}\sum_{p=2}^{[\alpha]+1}\frac{d^p}{dy^p}(\frac{y^{\alpha}}{(2i)^{p+1}})+\int_{x_0}^{+\infty}\frac{e^{-2iy}}{(2i)^{[\alpha]+2}}\frac{d^{[\alpha]+2}}{dy^{[\alpha]+2}}(y^{\alpha})dy]-\frac{\alpha x_0^{\alpha-1}}{8i}+\frac{1}{(2i)^2}\int_0^{x_0}\frac{d}{dy} y^{\alpha}e^{-2iy}dy+o(1)$$
Using equality (\ref{eq:Gamma}), the above expression can be written as
$$\frac{R}{\theta}=\frac{\Gamma(\alpha+1)}{(2i)^{\alpha+2}}+o(1),$$
which proves Theorem \ref{teo:main}.

When $\alpha>1$, remark that $\frac{bb'(x_0)}{2i}+\int_0^{x_0}\frac{b'}{b}(x)e^{-2i\phi(x)}dx=\frac{1}{2i}\int_0^{x_0}(M(y)+(b'(y))^2)e^{-2i\phi(y)}dy$. This allows to verify that, in the case $\alpha>1$, one could have chosen $x_0=0$.

\subsection{Calculation of $\lim_{\theta\rightarrow 0}\frac{R_{x_0}}{\theta}$}
\label{proof:Rx_0-principal}

We now address the proof of Proposition \ref{p:Rx0-principal}.

One relies in this section on a fixed value of $x_0>0$ independent on $\theta$. In the case $0<\alpha<1$, the calculation of the limit, as well as the estimate of the remainder term $\frac{R_{x_0}}{\theta}-\lim_{\theta\rightarrow 0}\frac{R_{x_0}}{\theta}$, are both straightforward, as the following lemma shows.
\begin{lemma}\label{estimate-remainder1}
For $0<\alpha<1$, there exists $\theta_0(x_0)$ and $c$ such that, for all $\theta<\theta_0(x_0)$ and $\theta<c^{-1}$
$$\vert R_{x_0}-\frac{1}{2i}\int_{x_0}^{+\infty}M(y)e^{-2i\phi(y)}dy\vert\leq \frac{c^2\theta^2}{1-c\theta}.$$
and one gets the estimate
$$\vert\frac{R_{x_0}}{\theta}+\frac{1}{2i}\int_{x_0}^{+\infty}y^{\alpha-2}T_0(0)dy\vert\leq C\theta.$$
 \end{lemma}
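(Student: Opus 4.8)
The plan is to handle the two displayed inequalities in turn: the first is a direct consequence of the contraction estimates for the Volterra series, and the second reduces, after dividing by $\theta$, to making a dominated‑convergence argument quantitative. Throughout I would use the expansion $S^{>}(y) = \left(1 + \sum_{j\geq 1} s^{>}_{j}(y)\right) e^{-2i\phi(y)}$ from Proposition \ref{teo:S0}, the tail bound $\left\| \sum_{j \geq 1} s^{>}_{j} \right\|_{L^\infty(x_0,\infty)} \leq \frac{M_{x_0}/2}{1 - M_{x_0}/2}$ coming from Proposition \ref{teo:Vseries} (taking $N=0$, and valid at $\sigma=0$ by Proposition \ref{teo:S0}), the bound $M_{x_0} = O(\theta)$ from Appendix \ref{sec:appendixB}, and the factorization $M(y) = \theta\, y^{\alpha-2} T_0(\theta y^\alpha)$ with $T_0$ smooth and bounded and $T_0(0) = -\tfrac14 \alpha(\alpha-1)$. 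Fix $c$ so that $M_{x_0}/2 \leq c\theta$, and take $\theta < c^{-1}$ to keep $1 - c\theta > 0$.

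For the first inequality, start from $R_{x_0} = \frac{1}{2i}\int_{x_0}^\infty M(y) S^{>}(y)\,dy$ and isolate the contribution of the leading term $s^{>}_0 \equiv 1$:
\[
R_{x_0} - \frac{1}{2i}\int_{x_0}^\infty M(y) e^{-2i\phi(y)}\,dy = \frac{1}{2i}\int_{x_0}^\infty M(y)\, e^{-2i\phi(y)} \sum_{j\geq 1} s^{>}_{j}(y)\,dy.
\]
Taking absolute values, pulling out the sup norm of the tail, and using $\int_{x_0}^\infty |M| = M_{x_0}$ bounds the right-hand side by $\tfrac12 M_{x_0} \cdot \frac{M_{x_0}/2}{1 - M_{x_0}/2} = \frac{(M_{x_0}/2)^2}{1 - M_{x_0}/2}$. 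Since $t \mapsto t^2/(1-t)$ is increasing on $(0,1)$ and $M_{x_0}/2 \leq c\theta < 1$, this is at most $\frac{c^2\theta^2}{1-c\theta}$, which is the claim.

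For the second inequality, dividing the first by $\theta$ shows that $\frac{R_{x_0}}{\theta}$ agrees, up to $O(\theta)$, with $\frac{1}{2i}\int_{x_0}^\infty y^{\alpha-2} T_0(\theta y^\alpha) e^{-2i\phi(y)}\,dy$, which must then be compared with the limit of Proposition \ref{p:Rx0-principal}, namely $\frac{1}{2i}\int_{x_0}^\infty y^{\alpha-2} T_0(0) e^{-2iy}\,dy$. The decisive feature of the range $0<\alpha<1$ is that $\alpha-2 < -1$, so $y^{\alpha-2}\in L^1([x_0,\infty))$; since $T_0$ is bounded and the integrand converges pointwise ($T_0(\theta y^\alpha)\to T_0(0)$, $\phi(y)\to y$), dominated convergence identifies the limit immediately. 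For the $O(\theta)$ \emph{rate} I would split the difference of integrands as $y^{\alpha-2}\big[(T_0(\theta y^\alpha) - T_0(0)) e^{-2i\phi(y)} + T_0(0)\,(e^{-2i\phi(y)} - e^{-2iy})\big]$ and insert the linearizations $|T_0(\theta y^\alpha) - T_0(0)| \leq C\theta y^\alpha$ and $|\phi(y) - y| \leq C\theta y^{\alpha+1}$, which are valid on the region where $\theta y^\alpha \lesssim 1$ and there contribute $O(\theta)$.

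The main obstacle is the large-$y$ regime $\theta y^\alpha \gtrsim 1$, where these expansions degrade: a crude absolute bound on the phase difference $e^{-2i\phi}-e^{-2iy}$ only yields $O\!\big(\theta^{(1-\alpha)/(\alpha+1)}\big)$, and on the amplitude difference $O\!\big(\theta^{(1-\alpha)/\alpha}\big)$, both of which are slower than $\theta$. To recover the clean $O(\theta)$ I would cut the integral at a threshold $y \sim \theta^{-1/\alpha}$ and, on the tail, exploit the oscillation of $e^{-2i\phi(y)}$: the phase derivative $\phi' = c^{-1}\geq 1$ is increasing and in fact grows like $\theta^{1/2} y^{\alpha/2}$ there, so integration by parts against the phase — combined with the decay $T_0(g) = O(g^{-3/2})$ as $g\to\infty$ — turns the non-absolutely-convergent tail into boundary and lower-order terms of size $O(\theta)$. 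This oscillatory bookkeeping is the genuine technical point; for $0<\alpha\leq\tfrac12$ the absolute estimates already suffice, which is why the case is labelled straightforward, but the care is needed for $\tfrac12<\alpha<1$.
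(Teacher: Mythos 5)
Your proof of the first display is correct and is exactly the paper's argument: subtract the $j=0$ term of the Volterra series from $R_{x_0}=\frac{1}{2i}\int_{x_0}^{\infty}M\,S^{>}$, bound $\bigl\|\sum_{j\geq 1}s^{>}_{j}\bigr\|_{\infty}$ by the geometric tail $\frac{M_{x_0}/2}{1-M_{x_0}/2}$, and use $M_{x_0}\leq 2c\theta$ (Lemma \ref{bound-Mx0}). Note that the paper's own proof stops there: the second display is only asserted ("one gets the estimate"), so for it you are genuinely on your own; your reading of that display, with the missing oscillatory factor $e^{-2iy}$ reinstated so that it matches the limit in Proposition \ref{p:Rx0-principal}, is the correct one.

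The gap is in your quantitative scheme for the second display, and it sits in the phase-correction term. You claim that on the region $\theta y^{\alpha}\lesssim 1$ the linearizations contribute $O(\theta)$. For the phase term this is false for every $\alpha\in(0,1)$: the bound $|\phi(y)-y|\leq C\theta y^{\alpha+1}$ saturates already at $y\sim\theta^{-1/(\alpha+1)}$, far below your cutoff $\theta^{-1/\alpha}$, and inserting it up to your cutoff gives
\[
\theta\int_{x_0}^{\theta^{-1/\alpha}}y^{\alpha-2}\,y^{\alpha+1}\,dy\;\sim\;\theta\,\bigl(\theta^{-1/\alpha}\bigr)^{2\alpha}\;=\;\theta^{-1},
\]
which diverges rather than being $O(\theta)$. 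Splitting optimally (cut at $\theta^{-1/(\alpha+1)}$, crude bound $|e^{-2i\phi}-e^{-2iy}|\leq 2$ beyond) yields only $O\bigl(\theta^{(1-\alpha)/(1+\alpha)}\bigr)$, and since $(1-\alpha)/(1+\alpha)<1$ for \emph{all} $\alpha\in(0,1)$, your closing claim that absolute estimates suffice for $0<\alpha\leq\tfrac12$ is also wrong: the obstruction is the phase term, and it lives in the intermediate region $\theta^{-1/(\alpha+1)}\lesssim y\lesssim\theta^{-1/\alpha}$, not in the tail where you propose to integrate by parts. So the plan as written does not reach $O(\theta)$.

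The repair is to exploit oscillation on the whole range, not only past $\theta^{-1/\alpha}$; the cleanest route is the paper's own device for $\alpha\geq 1$, namely repeated integration by parts against the phase via $\mathcal{L}(f)=(cf)'$ (Lemma \ref{l:A1}, Lemma \ref{estimate-integration-by-parts2}), which applies verbatim here because $y^{\alpha-2}T_0(\theta y^{\alpha})$ and its $\mathcal{L}$-derivatives are integrable at $+\infty$ when $\alpha<1$. Two such integrations by parts, performed on both integrals against their respective phases $\phi(y)$ and $y$, produce boundary terms at $x_0$ that agree to $O(\theta)$ (since $c(x_0)=1+O(\theta)$, $\phi(x_0)=x_0+O(\theta)$, $T_p(\theta x_0^{\alpha})=T_p(0)+O(\theta)$) and leave integrands of size $y^{\alpha-4}$; for those your splitting does close, since $\theta\int_{x_0}^{\infty}y^{2\alpha-4}dy=O(\theta)$, $\theta\int_{x_0}^{\theta^{-1/(\alpha+1)}}y^{2\alpha-3}dy=O(\theta)$, and the leftover tails are $O\bigl(\theta^{(3-\alpha)/\alpha}\bigr)$ and $O\bigl(\theta^{(3-\alpha)/(1+\alpha)}\bigr)$, both $O(\theta)$ for $\alpha\leq 1$. (One integration by parts suffices when $\alpha\leq\tfrac12$; two are needed for $\tfrac12<\alpha<1$.)
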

Indeed, there exists a constant $C$ and a constant $c$ such that, for $x_0>0$ given and $0<\alpha<1$

\begin{equation} M_{x_0}\leq \theta \frac{x_0^{\alpha-1}}{1-\alpha}C=2c\theta
\end{equation}
Under this estimate, one checks that, for $0<\alpha<1$,
\begin{equation}
\vert S^{>}(x)-e^{-2i\phi(x)}\vert\leq  \frac{c\theta}{1-c\theta},
\end{equation}
and one deduces that
$$\vert R_{x_0}-\frac{1}{2i}\int_{x_0}^{+\infty}M(x')e^{-2i\phi(x')}dx'\vert\leq (c\theta)^2\frac{1}{1-c\theta }.$$
This proves Lemma \ref{estimate-remainder1}.

The proof for $\alpha>1$ is more complicated. Indeed, one of the difficulties is that, when $\theta>0$, $\theta^{-1}M(x)$ belongs to $L^1([x_0, +\infty[)$ while its limit when $\theta\rightarrow 0$ is $-\frac{\alpha(\alpha-1)}{4}x^{\alpha-2}$, which does not belong to $L^1([x_0, +\infty[)$.

As $M$ is multiplied by an oscillating term, we will use repeated integrations by parts to decrease the degree of $x^{\alpha-2}$.

Before writing the details of the proof of Proposition \ref{p:Rx0-principal} in this case, let us give a synopsis of it. 

Introduce the operator $\mathcal{L}$ given by 

\begin{equation}\label{mathcalL}\mathcal{L}(f)(x)=\frac{d}{dx}(c f)(x),\end{equation}

where $c(x)$ is from equation (\ref{c-fractional}).

\begin{itemize}
\item Using the estimate $M_{x_0}=O(\theta^{\frac{1}{\alpha}})$, it is enough to study, instead of $R_{x_0}$, the quantity 
$$\frac{1}{2i}\int_{x_0}^{+\infty}M(y)e^{-2i\phi(y)}(\sum_{j=0}^{j_0-1}s^{>}_{0,j}(y))dy$$
with $\frac{j_0+1}{\alpha}>1> \frac{j_0}{\alpha}$,

\item Using repetitive integration by parts (Lemma \ref{estimate-integration-by-parts2}), one gets the identity
\begin{equation}
\int_{x_0}^{+\infty}f(y)e^{-2i\phi(y)}dy=\frac{c(x_0)}{2i}e^{-2i\phi(x_0)}[\sum_{p=0}^{n-1}(2i)^{-p}\mathcal{L}^p(f)(x_0)]+(2i)^{-n}\int_{x_0}^{+\infty}\mathcal{L}^n(f)(y)e^{-2i\phi(y)}dy,
\label{eq:formula-ipp}
\end{equation}
\item Observe finally that, for $f(y)= T_0(\theta y^{\alpha})(\sum_{j=0}^{j_0-1}s^{>}_{0,j}(y))$, $\mathcal{L}^p(f)(x_0)$ converges, when $\theta\rightarrow 0$, to $\frac{d^p}{dx^p}(T_0(0) x^{\alpha-2})(x_0)$. \end{itemize}

These steps being proven, we choose, for $\alpha\notin \N$, $n$ such that $\alpha-2-n<-1$. The dominated convergence theorem proves that $\int_{x_0}^{+\infty}\mathcal{L}^n(f)(y)e^{-2i\phi(y)}dy$ converges, when $\theta\rightarrow 0$, to $\int_{x_0}^{+\infty}\frac{d^n}{dy^n}(T_0(0) y^{\alpha-2})e^{-2iy}dy$. 

Remark that a special treatment shall be used for $\alpha\in \N$.

The proof of Proposition \ref{p:Rx0-principal} relies on Lemma \ref{estimate-integration-by-parts2} and Appendix \ref{p:all-terms}. We introduce $j_0$ such that $j_0< \alpha<j_0+1$. 

%
The first Lemma suppresses all terms whose $L^{\infty}$ norm is a $o(\theta)$, for all $\alpha>0$:
\begin{lemma}\label{estimate-remainder2}
Let $\alpha>1$. For all $\theta$ such that $\theta<\theta_0(x_0)$ (see Lemma \ref{ineq:Mx0}), one has
$$\vert R_{x_0}-\frac{1}{2i}\int_{x_0}^{+\infty}M(y)e^{-2i\phi(y)}(\sum_{j=0}^{j_0-1}s^{>}_{0,j}(y))dy\vert\leq \theta^{\frac{j_0+1}{\alpha}}(\frac{I}{2})^{j_0+1} (1-\frac12 \theta^{\frac{1}{\alpha}}I)^{-1}.$$
\end{lemma}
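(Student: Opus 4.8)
The plan is to recognize the left-hand side as the tail of the Volterra series for $S^{>}$ and to control it by the geometric estimate already established in Proposition \ref{teo:Vseries}, converting the resulting factor of $M_{x_0}$ into a power of $\theta^{1/\alpha}$ at the very end. First I would start from the definition (\ref{Rx0}) of $R_{x_0}$ and insert the expansion of Proposition \ref{teo:S0}(2), namely $S^{>}(y)=\left(\sum_{j\ge 0}s^{>}_{0,j}(y)\right)e^{-2i\phi(y)}$, to write
\[
R_{x_0}=\frac{1}{2i}\int_{x_0}^{+\infty}M(y)\,e^{-2i\phi(y)}\left(\sum_{j\ge 0}s^{>}_{0,j}(y)\right)dy.
\]
Splitting the series at $j_0$, the quantity to be estimated is exactly the tail contribution $\tfrac{1}{2i}\int_{x_0}^{+\infty}M(y)e^{-2i\phi(y)}\big(\sum_{j\ge j_0}s^{>}_{0,j}(y)\big)\,dy$. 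The interchange of summation and integration is legitimate because the series converges uniformly on $[x_0,+\infty)$ by Proposition \ref{teo:Vseries}, while $M\in L^1([x_0,+\infty))$.

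Next, since $\phi$ is real at $\sigma=0$ we have $|e^{-2i\phi(y)}|=1$, so I would bound the modulus of the tail by $\tfrac12\int_{x_0}^{+\infty}|M(y)|\,dy$ times the uniform norm of the tail sum, that is by $\tfrac12 M_{x_0}\big\|\sum_{j\ge j_0}s^{>}_{0,j}\big\|_{L^\infty(x_0,+\infty)}$. The tail-sum norm is controlled by the last inequality of Proposition \ref{teo:Vseries}(ii), taken with $N+1=j_0$ and extended to $\sigma=0$ via Proposition \ref{teo:S0}(3):
\[
\Big\|\sum_{j\ge j_0}s^{>}_{0,j}\Big\|_{L^\infty(x_0,+\infty)}\le \frac{1}{1-\frac{M_{x_0}}{2}}\left(\frac{M_{x_0}}{2}\right)^{j_0}.
\]
Combining the two factors yields the clean intermediate bound $\left(\frac{M_{x_0}}{2}\right)^{j_0+1}\big(1-\frac{M_{x_0}}{2}\big)^{-1}$.

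Finally I would convert $M_{x_0}$ into a power of $\theta$, and this is where the hypothesis $\alpha>1$ enters. Rescaling $t=\theta^{1/\alpha}y$ in $M_{x_0}=\int_{x_0}^{+\infty}|M(y)|\,dy$ turns $M(y)$ into $\theta^{2/\alpha}g(t)$ with $g(t)=(1+t^\alpha)^{-1/4}\frac{d^2}{dt^2}(1+t^\alpha)^{-1/4}$ and absorbs one power $\theta^{-1/\alpha}$ from $dy$, giving $M_{x_0}\le \theta^{1/\alpha}\int_0^{+\infty}|g(t)|\,dt=:I\,\theta^{1/\alpha}$; the constant $I$ is finite precisely because $g(t)\sim-\tfrac14\alpha(\alpha-1)t^{\alpha-2}$ is integrable at $t=0$ when $\alpha>1$ and decays like $t^{-\alpha/2-2}$ at infinity (this is the content of Lemma \ref{ineq:Mx0}). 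Substituting $M_{x_0}\le I\theta^{1/\alpha}$ into $\left(\frac{M_{x_0}}{2}\right)^{j_0+1}\big(1-\frac{M_{x_0}}{2}\big)^{-1}$ and using monotonicity in $M_{x_0}$ produces the stated bound $\theta^{\frac{j_0+1}{\alpha}}\left(\frac{I}{2}\right)^{j_0+1}\big(1-\tfrac12\theta^{1/\alpha}I\big)^{-1}$.

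The only genuinely nontrivial ingredient is the scaling estimate $M_{x_0}\le I\theta^{1/\alpha}$, which is exactly where $\alpha>1$ is needed: for $\alpha\le 1$ the rescaled profile $g$ fails to be integrable at the origin, and that regime is instead treated by the separate Lemma \ref{estimate-remainder1}. Everything else reduces to the triangle inequality together with the contraction and tail estimates of Proposition \ref{teo:Vseries}, so I expect no further obstruction.
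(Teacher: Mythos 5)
Your proof is correct and is essentially the argument the paper intends (the paper states this lemma without writing out its proof): you expand $R_{x_0}$ via the Volterra series of Proposition \ref{teo:S0}, bound the tail $\sum_{j\geq j_0}s^{>}_{0,j}$ by the geometric estimate of Proposition \ref{teo:Vseries}(ii) extended to $\sigma=0$, and convert $M_{x_0}\leq I\,\theta^{1/\alpha}$ by the rescaling $t=\theta^{1/\alpha}y$, which is exactly Lemma \ref{bound-Mx0} and mirrors how the paper proves the companion Lemma \ref{estimate-remainder1}. Nothing is missing; the chain $\bigl(\tfrac{M_{x_0}}{2}\bigr)^{j_0+1}\bigl(1-\tfrac{M_{x_0}}{2}\bigr)^{-1}\leq \theta^{\frac{j_0+1}{\alpha}}\bigl(\tfrac{I}{2}\bigr)^{j_0+1}\bigl(1-\tfrac12\theta^{\frac{1}{\alpha}}I\bigr)^{-1}$ reproduces the stated bound exactly.
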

The second lemma yields a formula of integration by parts which is crucial for the estimate on $[x_0, +\infty[$ based on 
$$\int_{x_0}^{+\infty}f(y)e^{-2i\phi(y)}dy=\frac{f(x_0)c(x_0)}{2i}e^{-2i\phi(x_0)}+\frac{1}{2i}\int_{x_0}^{+\infty}\mathcal{L}(f)(y)e^{-2i\phi(y)}dy,$$
for $f\in L^1([x_0, +\infty[)$ and $\mathcal{L}(f)\in L^1([x_0, +\infty[)$. It can be generalized into 


\begin{lemma}
\label{estimate-integration-by-parts2}
Let $f$ be a function of class $C^{j_0-1}([0, +\infty[)$, bounded, such that all derivatives are bounded by $y^{-1}$ when $y$ large. For every $n_0\geq 0$, (\ref{eq:formula-ipp}) holds.
\end{lemma}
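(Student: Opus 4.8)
The plan is to prove the integration-by-parts formula \eqref{eq:formula-ipp} by induction on $n_0$, exploiting the fact that $\phi' = c^{-1}$, so that $e^{-2i\phi(y)}$ has a simple derivative structure with respect to the operator $\mathcal{L}$. The base case $n_0 = 1$ is the displayed formula preceding the lemma, so the substance is the inductive step.

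First I would record the key differential identity underlying the operator $\mathcal{L}(f) = \frac{d}{dy}(cf)$. Since $\phi'(y) = c^{-1}(y)$, we have $\frac{d}{dy} e^{-2i\phi(y)} = -2i\,\phi'(y)\,e^{-2i\phi(y)} = -\tfrac{2i}{c(y)} e^{-2i\phi(y)}$. Rearranging, $e^{-2i\phi(y)} = -\tfrac{c(y)}{2i}\frac{d}{dy} e^{-2i\phi(y)}$, which is precisely the factor that makes one step of integration by parts generate the operator $\mathcal{L}$. Concretely, for $f$ and $\mathcal{L}(f)$ both in $L^1([x_0,+\infty[)$, integrating by parts gives
\[
\int_{x_0}^{+\infty} f(y) e^{-2i\phi(y)}\,dy = -\frac{1}{2i}\Bigl[ c(y) f(y) e^{-2i\phi(y)}\Bigr]_{x_0}^{+\infty} + \frac{1}{2i}\int_{x_0}^{+\infty}\frac{d}{dy}\bigl(c(y) f(y)\bigr)\, e^{-2i\phi(y)}\,dy,
\]
where the boundary term at $+\infty$ vanishes because $cf \in L^1$ together with the decay assumption on $f$ (the derivatives bounded by $y^{-1}$ ensure $cf \to 0$), leaving $\frac{c(x_0)f(x_0)}{2i}e^{-2i\phi(x_0)} + \frac{1}{2i}\int_{x_0}^{+\infty}\mathcal{L}(f)(y) e^{-2i\phi(y)}\,dy$. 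This is the base case.

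Next I would iterate: assuming \eqref{eq:formula-ipp} holds for some $n$, I apply the base-case identity to the remaining integral $\int_{x_0}^{+\infty}\mathcal{L}^n(f)(y) e^{-2i\phi(y)}\,dy$, with $\mathcal{L}^n(f)$ in place of $f$. This produces the boundary term $\frac{c(x_0)}{2i}\mathcal{L}^n(f)(x_0) e^{-2i\phi(x_0)}$, which, once multiplied by the prefactor $(2i)^{-n}$ already present, contributes the $p = n$ term of the sum, and it produces a new remainder integral $(2i)^{-(n+1)}\int_{x_0}^{+\infty}\mathcal{L}^{n+1}(f)(y)e^{-2i\phi(y)}\,dy$. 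This advances the formula from $n$ to $n+1$, completing the induction.

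The main obstacle is justifying the vanishing of the boundary terms at $+\infty$ and the applicability of integration by parts at each stage, i.e.\ checking that $\mathcal{L}^p(f) \in L^1([x_0,+\infty[)$ and that $c(y)\mathcal{L}^p(f)(y) e^{-2i\phi(y)} \to 0$ as $y \to +\infty$ for each $0 \le p \le n_0 - 1$. Here the hypothesis that $f \in C^{j_0-1}$ with all derivatives bounded by $y^{-1}$ for large $y$ is exactly what is needed: since $c(y) = (1+\theta y^\alpha)^{-1/2}$ grows at worst polynomially and each application of $\mathcal{L}$ differentiates and multiplies by a bounded (indeed decaying toward the derivatives' regime) factor, one must track that $\mathcal{L}^p(f)$ decays fast enough that both its integrability and the pointwise decay of $c\,\mathcal{L}^p(f)$ hold. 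I would verify by the Leibniz rule that $\mathcal{L}^p(f)$ is a finite sum of products of derivatives of $c$ and of $f$ up to order $p \le n_0 - 1$, each controlled by the stated bounds, so the decay $y^{-1}$ on $f$ and its derivatives propagates and guarantees the boundary terms vanish. This bookkeeping is the only delicate point; the algebraic structure of the induction is routine once the operator identity $\frac{d}{dy}e^{-2i\phi} = -\tfrac{2i}{c}e^{-2i\phi}$ is in hand.
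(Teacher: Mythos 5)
Your proof is correct and takes essentially the same route as the paper: the paper states the one-step identity $\int_{x_0}^{+\infty} f(y) e^{-2i\phi(y)}\,dy=\frac{f(x_0)c(x_0)}{2i}e^{-2i\phi(x_0)}+\frac{1}{2i}\int_{x_0}^{+\infty}\mathcal{L}(f)(y)e^{-2i\phi(y)}\,dy$ (valid since $\phi'c=1$) and obtains (\ref{eq:formula-ipp}) by iterating it, exactly your induction; the iteration is written out explicitly only for the companion Lemma \ref{l:A1} in the appendix. One small repair to your bookkeeping: the boundary term at $+\infty$ vanishes simply because $c(y)=(1+\theta y^{\alpha})^{-1/2}\to 0$ while $f$ stays bounded (your claims that $cf\in L^1$ and that $c$ ``grows'' are neither needed nor implied by the hypotheses), and the decay $\mathcal{L}(f)=c'f+cf'=O(y^{-\frac{\alpha}{2}-1})$ is what makes the remainder integrals absolutely convergent.
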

\paragraph{Proof of Proposition \ref{p:Rx0-principal}}
We differentiate two cases for $\alpha$. Indeed, the proof we rely on is based on the behavior of $\mathcal{L}^n(f)$ for $f$ proportional to $x^{\alpha-2}$, and one of the arguments that may be used is the fact that $\mathcal{L}^n(x^{\alpha-2})$ behaves as $x^{\alpha-2-n}$, and that $\alpha-2-n$ is strictly smaller than $-1$ for $n$ large enough, except if $\alpha$ is integer because the derivative of $x^0$ is 0.
\paragraph{First case: $\alpha\notin \N$}
Let $f$ be given by \begin{equation}f(y)= y^{\alpha-2}T_0(\theta y^{\alpha})(\sum_{j=0}^{j_0-1}s^{>}_{0,j}(y)).\label{choix-de-f}\end{equation}
such that $M(y)(\sum_{j=0}^{j_0-1}s^{>}_{0,j}(y))=-\theta f(y)$.
There exists $\theta_0(x_0)$ and $D_0$ such that, for $\theta<\theta_0(x_0)$:
$$\forall y\geq x_0, \vert\vert f(y)\vert\vert\leq D_0y^{\alpha-2}.$$
This comes from $T_0$ bounded, $M_{x_0}$ bounded by $K_0\theta^{\frac{1}{\alpha}}$, and all the terms of the expansion are bounded (Proposition \ref{teo:S0}).

Thanks to $c(y)\rightarrow 1$ for all $y\in [x_0, +\infty[$ when $\theta\rightarrow 0$,  $f(y)$ converges to $y^{\alpha-2}T_0(0)$ when $\theta\rightarrow 0$, that is $\mathcal{L}(f)$ converges to $(\alpha-2)y^{\alpha-3}T_0(0)$.

Moreover, uniformly on $[x_0, +\infty[$, for all $n\geq 0$;
\begin{equation}\mbox{lim}_{\theta\rightarrow 0}\mathcal{L}^n(f)(x)= \frac{d^n}{dy^n}(y^{\alpha-2})T_0(0).\label{limitn}\end{equation}
Let us choose $n_0=[\alpha]$ and recall that $x_0$ is given, independent on $\theta$. As $\alpha\notin \N$, for all $p\geq 1$, there exists $T_p$, bounded on $[0, +\infty)$, such that 
$$\mathcal{L}^{p}(f)(x)= x^{\alpha-2-p}T_{p}(\theta x^{\alpha}),$$
with $T_p(0)\not=0$ for all $p\geq 0$ thanks to $\alpha\notin \N$.

The estimates obtained in Lemma \ref{l:A2} allow us to use the dominated convergence theorem on $\int_{x_0}^{+\infty}\mathcal{L}^{n_0}(M) e^{-2i\phi(y)}dy$. This method can be applied in the two cases studied in the present paper (one gets $\mathcal{L}^n(M)(x)=\theta x^{\alpha-2-n}T_n^*(\theta x^{\alpha})$, where $T_n^*$ is a bounded on $[0, +\infty[$ rational fraction in the model case (\ref{eq:c_lambda}) and it is a compactly supported function for (\ref{eq:c_lambdaofx}), hence in both cases it is uniformly bounded by a constant $K_0$) and the use of the limit (\ref{limitn}) allows us to obtain:
$$
\begin{array}{ll}\mbox{lim}_{\theta\rightarrow 0}\int_{x_0}^{+\infty}f(y)e^{-2i\phi(y)}dy&=\frac{e^{-2ix_0}}{2i}\sum_{p=0}^{n_0-1}\mbox{lim}_{\theta\rightarrow 0}\frac{\mathcal{L}^p(f)}{(2i)^{p}}(x_0))+\int_{x_0}^{+\infty}\mbox{lim}_{\theta\rightarrow 0}\frac{\mathcal{L}^{n_0}(f)}{(2i)^{n_0}}(y)e^{-2iy}dy\cr
&=T(0)[\frac{e^{-2ix_0}}{2i}\sum_{p=0}^{[\alpha]-1}\frac{1}{(2i)^{p}}\frac{d^p}{dy^p}(y^{\alpha-2})(x_0))+\frac{1}{(2i)^{[\alpha]}}\int_{x_0}^{+\infty}\frac{d^{[\alpha]}}{dy^{[\alpha]}}(y^{\alpha-2})e^{-2iy}dy].\end{array}
$$
that is, using $T(0)=-\frac{\alpha(\alpha-1)}{(2i)^2}$ and $T_{[\alpha]}(0)=(\alpha-2)...(\alpha-[\alpha]-1)T_0(0)$
\begin{equation}
\begin{array}{ll}-\mbox{lim}_{\theta\rightarrow 0}\int_{x_0}^{+\infty}f(y)e^{-2i\phi(y)}dy&=\frac{e^{-2ix_0}}{2i}[\sum_{p=0}^{n_0-1}\mbox{lim}_{\theta\rightarrow 0}\frac{\mathcal{L}^p(f)}{(2i)^{p}}(x_0)]+\int_{x_0}^{+\infty}\mbox{lim}_{\theta\rightarrow 0}\frac{\mathcal{L}^{n_0}(f)}{(2i)^{n_0}}(y)e^{-2iy}dy\cr
&=\frac{e^{-2ix_0}}{2i}[\sum_{p=2}^{[\alpha]+1}\frac{1}{(2i)^{p}}\frac{d^p}{dy^p}(y^{\alpha})(x_0))]+\frac{1}{(2i)^{[\alpha]+2}}\int_{x_0}^{+\infty}\frac{d^{[\alpha]+2}}{dy^{[\alpha]+2}}(y^{\alpha})e^{-2iy}dy].\end{array}
\end{equation}
One recognizes the last part of the expression (\ref{eq:Gamma}). 
\paragraph{Second case: $\alpha\in \N$, denoted by $n$.}
Note that there exists, for $p\geq 1$, a function $T_p\in L^1([0, +\infty[)$ (it is a bounded on $[0, +\infty[$ rational fraction in the model case (\ref{eq:c_lambda}) and it is a compactly supported function for (\ref{eq:c_lambdaofx})) such that $\mathcal{L}^{n_0}(f)(x)= x^{n-2-n_0}T_{n_0}(\theta x^n)$, and $T_{n_0}(0)=(n-2)(n-3)...(n-1-n_0)T_0(0)$ for every $n_0$. The function $x\rightarrow T_{n_0}(\theta x^n)$ is uniformly bounded by a constant $K_{n_0}$. Choose $n_0=n-2$. One has
$$\mathcal{L}^{n-2}(f)(x)=T_{n-2}(\theta x^n).$$
Considering two additional derivatives, there exists a function $Q_n$, uniformly bounded by a constant $C_n$, independent on $\theta$ for $\theta<\theta_0(x_0)$, such that
\begin{equation}\mathcal{L}^n(f)(x)= \theta x^{n-2}[(n-1)c(cT_{n-2})'(\theta x^n)+n\theta x^n (c(cT_{n-2})')'(\theta x^n)]=x^{-2}Q_n(\theta x^n),\label{fstar}\end{equation}
In addition, $Q_n(0)=0$. 
One has thus

$$\int_{x_0}^{+\infty}T_{n-2}(\theta x^n)e^{-2i\phi(x)}dx=c(x_0)e^{-2i\phi(x_0)}[\frac{T_{n-2}(\theta x_0^n)}{2i}+\frac{\mathcal{L}(T_{n-2})(\theta x_0^n)}{(2i)^2}]+\int_{x_0}^{+\infty}x^{-2}\frac{Q_n(\theta x^n)}{(2i)^2}e^{-2i\phi(x)}dx,$$
The uniform bound in the integral by $Cx^{-2}$, $C$ independent on $\theta$ for $\theta<\theta_1$, allows to use the dominated convergence theorem, and one obtains, using $x^{-2}Q_n(\theta x^n)\rightarrow 0$ when $\theta\rightarrow 0$ thanks to $Q_n(0)=0$ (and if $\alpha\in \N$, $\frac{d^{[\alpha]+2}}{dy^{[\alpha]+2}}(y^{\alpha})=0$)

\begin{equation}
\mbox{lim}_{\theta\rightarrow 0}\int_{x_0}^{+\infty}f(y)e^{-2i\phi(y)}dy=T_0(0)e^{-2ix_0}[\sum_{p=0}^{n-2}(2i)^{-p}\frac{d^p}{dy^p}(y^{n-2})]\vert_{y=x_0},\end{equation}
which proves Proposition \ref{p:Rx0-principal} in the case $\alpha\in \N$.

This ends the proof of Proposition \ref{p:Rx0-principal} for all $\alpha>0$.

In the sequel, we shall denote by $\theta_1$ any constant satisfying $0<\theta_1<\theta_0(x_0)$ and by $C$, when needed, a general constant. The aim of the next Section, which is rather technical, is to evaluate the remainder terms in $R_{x_0}$ and its order of magnitude in $\theta$.

\section{Asymptotic bounds for $R$}
\label{sec-limit-theta}
This section quantifies the remainder term in the expression of $R$, and in particular identifies its magnitude in $\theta$. 

The case $0 < \alpha < 1$ was already addressed in Lemma \ref{estimate-remainder1}. It therefore remains to show

\begin{theorem}
\label{th:refl:coeff}
For all $\alpha \geq 1$, there exists a constant $C$ and $\theta_1$ such that, for all $\theta<\theta_1$, 
$$\vert \frac{R}{\theta}-\frac{\Gamma(\alpha+1)}{(2i)^{\alpha+2}}\vert \leq C\theta^{\frac{1}{\alpha}}.$$
\end{theorem}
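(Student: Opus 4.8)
The plan is to promote the $o(\theta)$ remainder already available for Theorem \ref{teo:main} into the quantitative rate $\theta^{1/\alpha}$, reusing the decomposition that is in place. The starting point is the exact identity (\ref{R:equality}),
$$R = R_{x_0} + \frac{bb'(x_0)}{2i}e^{-2i\phi(x_0)} + \int_0^{x_0}\frac{b'}{b}(y)e^{-2i\phi(y)}\,dy + O(\theta^2).$$
Since $\alpha\geq1$ forces $2\geq 1+\frac1\alpha$, the $O(\theta^2)$ term is already $O(\theta^{1+1/\alpha})$ and needs no further work. On the bounded interval $[0,x_0]$ the weight $b=(1+\theta y^\alpha)^{-1/4}$ is smooth in $\theta$ with $\theta y^\alpha$ small, so $\frac{b'}{b}=-\frac\alpha4\theta y^{\alpha-1}+O(\theta^2)$ and $\phi(y)=y+O(\theta)$ uniformly; hence the two finite-range terms expand as an explicit $O(\theta)$ contribution plus an $O(\theta^2)$ remainder. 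These carry the $x_0$-dependence that, as already noted after (\ref{R:equality}), is spurious and cancels in the final answer. Consequently the whole rate is dictated by the deviation $\frac{R_{x_0}}{\theta}-\lim_{\theta\to0}\frac{R_{x_0}}{\theta}$, and it remains to show this is $O(\theta^{1/\alpha})$.

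For the core estimate I would expand $R_{x_0}=\frac1{2i}\int_{x_0}^\infty M S^{>}$ through the Volterra series $S^{>}e^{2i\phi}=1+\sum_{j\geq1}s^{>}_j$ of Proposition \ref{teo:S0}, writing $R_{x_0}=\frac1{2i}\int_{x_0}^\infty M(y)e^{-2i\phi(y)}\big(1+\sum_{j\geq1}s^{>}_j(y)\big)dy$. The leading ($j=0$) term is treated by the integration-by-parts identity (\ref{eq:formula-ipp}) of Lemma \ref{estimate-integration-by-parts2}, applied to $f(y)=y^{\alpha-2}T_0(\theta y^\alpha)$ with $M=-\theta f$. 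The boundary values $\mathcal{L}^p(f)(x_0)=x_0^{\alpha-2-p}T_p(\theta x_0^\alpha)$ converge to their limits at rate $O(\theta)$ (since $x_0$ is fixed and $T_p(\theta x_0^\alpha)-T_p(0)=O(\theta)$), reproducing the $\Gamma$-type boundary sum of Proposition \ref{p:Rx0-principal} with $O(\theta)$ error. For the residual oscillatory integral $\int_{x_0}^\infty\mathcal{L}^n(f)e^{-2i\phi}$, with $n$ chosen so that $\mathcal{L}^n(f)(y)=y^{\alpha-2-n}T_n(\theta y^\alpha)$ is integrable at infinity, I would quantify the convergence to its $\theta\to0$ limit using the uniform bounds on $\mathcal{L}^n(M)$ from Lemma \ref{l:A2}.

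The heart of the matter, and the main obstacle, is the rate of this last step together with the control of the higher Volterra terms $j\geq1$. The crude $L^1$/contraction bounds behind Lemma \ref{estimate-remainder2} only give a truncation error $O(\theta^{(j_0+1)/\alpha})$, i.e.\ $O(\theta^{(1-\{\alpha\})/\alpha})$ after dividing by $\theta$ (with $\{\alpha\}=\alpha-[\alpha]$), which falls short of the claimed $\theta^{1/\alpha}$ by the factor $\theta^{\{\alpha\}/\alpha}$ when $\alpha\notin\mathbb{N}$; absolute-value estimates cannot be sharp, because the limiting kernel $y^{\alpha-2}$ fails to be integrable at $+\infty$ once $\alpha>1$. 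That deficit must be recovered from the oscillation of $e^{-2i\phi}$. Concretely, I would split every remaining integral at the transition scale $y\sim\theta^{-1/\alpha}$ (where $\theta y^\alpha\sim1$): on $[x_0,\theta^{-1/\alpha}]$ I use the smallness $T_n(\theta y^\alpha)-T_n(0)=O(\theta y^\alpha)$ and $\phi(y)-y=O(\theta y^{\alpha+1})$, while on $[\theta^{-1/\alpha},\infty)$ I exploit the rapid oscillation ($\phi'=(1+\theta y^\alpha)^{1/2}\gg1$) through further $\mathcal{L}$-integrations by parts against the decaying envelope $T_n$. Balancing the two regions should yield the $\theta^{1/\alpha}$ rate and, applied termwise with a geometric factor in $j$ coming from $M_{x_0}=O(\theta^{1/\alpha})$, show the aggregate of the $j\geq1$ terms to be $O(\theta^{1+1/\alpha})$. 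Finally, the integer case $\alpha\in\mathbb{N}$ must be handled separately, exactly as in Proposition \ref{p:Rx0-principal}: there $\frac{d^{[\alpha]+2}}{dy^{[\alpha]+2}}y^\alpha=0$ and the relevant symbol satisfies $Q_n(0)=0$, supplying the extra smallness $y^{-2}Q_n(\theta y^\alpha)\to0$ needed to close the dominated-convergence argument and recover the same bound.
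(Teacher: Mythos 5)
Your reduction to the deviation $\frac{R_{x_0}}{\theta}-\lim_{\theta\to 0}\frac{R_{x_0}}{\theta}$ is exactly the paper's (the finite-range terms of (\ref{R:equality}) cost only $O(\theta)$ after division by $\theta$, admissible since $1/\alpha\le 1$ for $\alpha\ge 1$), and your treatment of the leading $j=0$ term is sound and close in spirit to the paper's: where you split at the transition scale $y\sim\theta^{-1/\alpha}$, the paper performs the rescaling $y=\theta^{-1/\alpha}X$ followed by the change of variable $u=Xh(X^{\alpha})$, and in the integer case it uses precisely the $Q_n(0)=0$ cancellation you cite, though with a quantitative bound from Lemma \ref{l:A2} (not mere dominated convergence) to extract the rate $\theta^{1/n}$.

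The gap is in your third paragraph, on the higher Volterra terms. First, your assertion that the truncation deficit ``must be recovered from the oscillation of $e^{-2i\phi}$'' is incorrect, and it causes you to miss the simple device the paper uses: keep the terms $j\le j_0+1=[\alpha]+1$ instead of $j\le j_0-1$ as in Lemma \ref{estimate-remainder2}. The very same absolute-value contraction bound then gives a tail of size $\theta^{(j_0+2)/\alpha}$, and since $j_0+2-\alpha>1$ this is already $O(\theta^{1+1/\alpha})$; this is inequality (\ref{in:alpha}), and no oscillation is needed for the tail. Second, your proposed substitute --- termwise oscillatory estimates for \emph{all} $j\ge 1$ ``with a geometric factor in $j$'' --- is asserted rather than proven, and it is the genuinely hard point: to integrate by parts against $e^{-2i\phi(y)}s^{>}_{j}(y)$ you need the derivative structure of $s^{>}_{j}$, which is not of symbol type $T(\theta y^{\alpha})$ but an iterated oscillatory integral (note that $c\,\frac{ds^{>}_{j}}{dy}=e^{2i\phi}\int_x^{+\infty}M s^{>}_{j-1}e^{-2i\phi}\,dy$ carries an anti-oscillatory prefactor). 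The paper supplies this structure in Lemma \ref{l:A3}, $c\,\frac{ds^{>}_{j}}{dy}=\sum_{l<j}A_j^l s^{>}_{l}+r_j$ with symbol-type coefficients $A_j^l$, by an induction whose constants $C^j_{m_0}$ are not tracked in $j$; it is then applied, via Proposition \ref{p:all-terms}, only to the finitely many retained indices $j\le[\alpha]+1$, each contributing $O(\theta^{1/\alpha})$ to $R_{x_0}/\theta$. Your plan requires such bounds with constants summable in $j$, which is neither obvious nor addressed; as written, the argument does not close. The repair is the paper's: lengthen the truncation, bound the tail crudely, and run the $\mathcal{L}$-machinery only on the retained terms.
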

This is the most technical part of the paper, and we shall differentiate two cases, namely the case where $\alpha$ is integer and the case where $\alpha$ is not an integer. This is a consequence of the method used in this proof, namely the repeated integration by parts we perform for the study of an integral of the form $\int_{x_0}^{+\infty} x^{\alpha-2}T(\theta x^{\alpha})e^{-2i\phi(x)}dx$. In the case $\alpha$ not being an integer, there exists $n$ such that $\alpha-n< -1$ and the coefficient of $x^{\alpha-n}$ in the integral obtained after $n-2$ integration by parts is not zero. In the case $\alpha$ being an integer, if $n=\alpha$ one obtains an integral of the form $\int_{x_0}^{+\infty} T_{n-2}(\theta x^n)e^{-2i\phi(x)}dx$, and an additional integration by parts would lead to an integral containing $x^{n-1}$, which is not in $L^1([1, +\infty))$.

 
 Remark first that an estimate similar to the estimate of Lemma \ref{estimate-remainder2}  is
 $$\vert R_{x_0}-\frac{1}{2i}\int_{x_0}^{+\infty}M(y)e^{-2i\phi(y)}(\sum_{j=0}^{j_0+1}s^{>}_{0,j}(y))dy\vert\leq \theta^{\frac{j_0+2}{\alpha}}(\frac{I}{2})^{j_0+2} (1-\frac12 \theta^{\frac{1}{\alpha}}I)^{-1},$$
 from which one deduces
 \begin{equation}\label{in:alpha}\vert\frac{R_{x_0}}{\theta}-\frac{1}{2i}\int_{x_0}^{+\infty}\theta^{-1}M(y)e^{-2i\phi(y)}(\sum_0^{[\alpha]+1}s^{>}_{j}(y))dy\vert\leq C\theta^{\frac{1}{\alpha}}.\end{equation}
 Note that we are not limited to this estimate, we have as well
 \begin{equation}\label{in:2alpha}\vert\frac{R_{x_0}}{\theta}-\frac{1}{2i}\int_{x_0}^{+\infty}\theta^{-1}M(y)e^{-2i\phi(y)}(\sum_0^{2[\alpha]+1}s^{>}_{j}(y))dy\vert\leq C\theta.\end{equation}

 Appendix \ref{p:all-terms} leads to
 $$\vert\frac{R_{x_0}}{\theta}-\frac{1}{2i}\int_{x_0}^{+\infty}\theta^{-1}M(y)e^{-2i\phi(y)}dy\vert\leq C_2\theta^{\frac{1}{\alpha}}.$$

The proof of Theorem \ref{th:refl:coeff} relies on the estimate
$$\vert \frac{R_{x_0}}{\theta}-\frac{1}{2i}[e^{-2ix_0}\sum_{p=2}^{[\alpha]+1}\frac{d^p}{dy^p}(\frac{y^{\alpha}}{(2i)^{p+1}})+\int_{x_0}^{+\infty}\frac{e^{-2iy}}{(2i)^{[\alpha]+2}}\frac{d^{[\alpha]+2}}{dy^{[\alpha]+2}}(y^{\alpha})dy]\vert\leq C\theta^{\frac{1}{\alpha}}$$
for $\alpha\geq 1$.
As for the limit $\frac{R_{x_0}}{\theta}$, it is split into the cases $\alpha\notin\N$ and $\alpha\in \N$. Observe that $c(x_0)=1+O(\theta)$ and $\phi(x_0)=x_0+O(\theta)$, hence the contributions of order $\theta^{\frac{1}{\alpha}}$ can only come from the terms $s^{>}_{j}$, and from the integrals $\int_{x_0}^{+\infty}\mathcal{L}^p(f)(x)e^{-2i\phi(x)}dx.$

$\bullet$ In the case $\alpha\in \N$, using (\ref{fstar}) one obtains
$$\mathcal{L}^{n+1}(f)=x^{-2}\theta x^{n-1}[(n-2)R_n(\theta x^n)+n\theta x^nR'_n(\theta x^n)].$$ 
Observe that
$$ \int_{x_0}^{+\infty}T_{n-2}(\theta x^{n})e^{-2i\phi(x)}dx=\frac{c(x_0)e^{-2i\phi(x_0)}}{2i}[T_{n-2}+\frac{\mathcal{L}(T_{n-2})}{(2i)}+\frac{\mathcal{L}^2(T_{n-2}}{(2i)^2}])(\theta x_0^n)+\frac{1}{(2i)^3}\int_{x_0}^{+\infty}\mathcal{L}^{n+1}(f)e^{-2i\phi(x)}dx,$$
One proved the uniform optimal bound on $[x_0, +\infty[$ (Lemma \ref{l:A2})
$$\vert \theta x^{n-1}[(n-2)R_n(\theta x^n)+n\theta x^nR'_n(\theta x^n)]\vert\leq C\theta^{\frac{1}{n}}$$
from which one deduces that there exists a constant $C$ such that 
$$\vert \int_{x_0}^{+\infty}T_{n-2}(\theta x^n)e^{-2i\phi(x)}dx-\frac{c(x_0)e^{-2i\phi(x_0)}}{2i}[T_{n-2}+\frac{\mathcal{L}(T_{n-2})}{(2i)}+\frac{\mathcal{L}^2(T_{n-2})}{(2i)^2}](\theta x_0^n)\vert\leq C\theta^{\frac{1}{n}}x_0^{-1}$$
hence, as  $\frac{c(x_0)}{2i}[T_{n-2}(\theta x_0^n)+\frac{\mathcal{L}(T_{n-2})(\theta x_0^n)}{(2i)}+\frac{\mathcal{L}^2(T_{n-2})(\theta x_0^n)}{(2i)^2}]$ depend only on $\theta$ and $\theta\leq (\theta_0(x_0))^{1-\frac{1}{n}}\theta^{\frac{1}{n}}$ for $\theta<\theta_0(x_0)$, there exists a constant $M$ such that, for $\theta<\theta_0(x_0)$
\begin{equation}
\label{estimate-0}
\vert \int_{x_0}^{+\infty}T_{n-2}(\theta x^n)e^{-2i\phi(x)}dx-\frac{T_{n-2}(0)}{2i}e^{-2ix_0}\vert \leq M\theta^{\frac{1}{n}}.\end{equation}

One thus deduces that, using Lemma \ref{estimate-integration-by-parts2} for $n\geq 3$ $$\vert \frac{R_{x_0}}{\theta}-\frac{c(x_0)}{2i}e^{-2i\phi(x_0)}[\sum_{p=0}^{n-3}(2i)^{-p}\mathcal{L}^p(f)(x_0)]-\frac{T_{n-2}(0)}{2i}e^{-2ix_0}\vert \leq 2^{-n+2}M\theta^{\frac{1}{n}},$$ 
and using
$$\frac{c(x_0)}{2i}e^{-2i\phi(x_0)}[\sum_{p=0}^{n-3}(2i)^{-p}\mathcal{L}^p(f)(x_0)]=\frac{1}{2i}e^{-2ix_0}[\sum_{p=0}^{n-3}(2i)^{-p}x_0^{n-2-p}T_p(0)]+O(\theta^{\frac{1}{n}}),$$
we obtain the estimate
$$\frac{R_{x_0}}{\theta}=\frac{1}{2i}e^{-2ix_0}[\sum_{p=0}^{n-2}(2i)^{-p}x_0^{n-2-p}T_p(0)]+O(\theta^{\frac{1}{n}}),$$
which is exactly, thanks to Equality \ref{eq:Gamma}

$$\frac{R_{x_0}}{\theta}=\frac{\Gamma(n+2)}{(2i)^{n+2}}-\int_0^{x_0}y^{n+1}e^{-2iy}dy+O(\theta^{\frac{1}{n}}).$$
Theorem \ref{th:refl:coeff} is thus proven in the case $\alpha\in \N$.

$\bullet$ In the case $\alpha \notin\N$, $\alpha>1$:

  Use again Appendix \ref{p:all-terms} with an exact expression of $r_0(x_0,\theta)$. One has the identity (coming from Lemma \ref{l:A1} for $j=0$ and $m=M$):
 $$\int_{x_0}^{+\infty}\theta^{-1}M(y)e^{-2i\phi(y)}dy=\frac{c(x_0)}{2i}T_k^S(x_0^{-1}, \theta x_0^{\alpha})+\frac{1}{(2i)^{k+1}}\int_{x_0}^{+\infty}\mathcal{L}^{k+1}(\theta^{-1}M)e^{-2i\phi(y)}dy.$$
 Notice first that $\vert \frac{c(x_0)}{2i}T_k^S(x_0^{-1}, \theta x_0^{\alpha})-\frac{1}{2i}T_k^S(x_0^{-1},0)\vert=O(\theta)$. One has just to study the second term of this equality. Denote this second term by $J$. We use then the identity (\ref{symbolep}) of Lemma \ref{l:A2}:
 $$-J=\frac{1}{(2i)^{k+1}}\int_{x_0}^{+\infty}y^{\alpha-3-k}T_{k+1}(\theta y^{\alpha})e^{-2i\phi(y)}dy.$$
 We assume now that $\alpha-2-k<0$. The uniform bound $\vert T_{k+1}\vert \leq T_{k+1}^{\infty}$ translates into $\vert y^{\alpha-3-k}T_{k+1}(\theta y^{\alpha})\vert\leq y^{\alpha-3-k}T_{k+1}^{\infty}$, hence the integral converges for all $\theta\geq 0$.\\
   
 Use the change of variable $y=\theta^{-\frac{1}{\alpha}}X$. One obtains
 $$-J=\frac{1}{(2i)^{k+1}}\theta^{\frac{2+k}{\alpha}}\int_{\theta^{\frac{1}{\alpha}}x_0}^{+\infty}X^{\alpha-3-k}T_{k+1}(X^{\alpha})e^{-2i\theta^{-\frac{1}{\alpha}}Xh(X^{\alpha})}dX.$$
 Use the change of variable $u=Xh(X^{\alpha})$. As $X\rightarrow \sqrt{1+X^{\alpha}}$ is greater than 1, $X\rightarrow u(X)$ is a diffeomorphism from $[0, +\infty[$ to $[0, +\infty[$, such that $u'(0)=1$, hence its reciprocal is also a diffeomorphism and $X'(0)=1$. One deduces
 $$J=-\frac{1}{(2i)^{k+1}}\theta^{\frac{2+k}{\alpha}}\int_{\theta^{\frac{1}{\alpha}}x_0h(\theta x_0^\alpha)}^{+\infty}(1+(X(u))^{\alpha})^{-\frac12}(X(u))^{\alpha-3-k}T_{k+1}((X(u))^{\alpha})e^{-2i\theta^{-\frac{1}{\alpha}}u}du.$$
Denote by $Q(u)$ the quantity such that
 $$(1+(X(u))^{\alpha})^{-\frac12}(X(u))^{\alpha-3-k}T_{k+1}((X(u))^{\alpha})-(1+u^{\alpha})^{-\frac12}u^{\alpha-3-k}T_{k+1}(u^{\alpha})=u^{\alpha}Q(u).$$

 When $0\leq u\leq 1$, observing that there exists a function $Z$ solving $Z(\tau)h(\tau (Z(\tau))^{\alpha})=1$ such that $X(u)=uZ(u^{\alpha})$, there exists a function $r$ smooth such that $$u-X(u)=\frac{1}{h(u^{\alpha}(Z(u^{\alpha}))^{\alpha})}[h(u^{\alpha}(Z(u^{\alpha}))^{\alpha})-1]=u^{\alpha}r(u^{\alpha}),$$
 which imply that $Q$ is bounded in the neighborhood of 0.\\
 On the other side, the estimate, when $X$ is large, $u\simeq X^{\frac{\alpha}{2}+1}$ leads to $(1+(X(u))^{\alpha})^{-\frac12}(X(u))^{\alpha-3-k}\simeq u^{-1}u^{-\frac{2+k}{\frac{\alpha}{2}+1}}$. \\
 One has then $$-J=\frac{1}{(2i)^{k+1}}\theta^{\frac{2+k}{\alpha}}\int_{x_0h(\theta x_0^\alpha)}^{+\infty}(1+\theta t^{\alpha})^{-\frac12}t^{1-\frac{3+k}\alpha}T_{k+1}(\theta t^{\alpha})e^{-2it}dt+\frac{1}{(2i)^{k+1}}\theta^{\frac{2+k}{\alpha}}\int_{\theta^{\frac{1}{\alpha}}x_0h(\theta x_0^\alpha)}^{+\infty}u^{\alpha}Q(u)e^{-2i\theta^{-\frac{1}{\alpha}}u}du.$$
 The last term of this equality is equal to
 $$K=\frac{1}{(2i)^{k+1}}\theta^{1+\frac{3+k}{\alpha}}\int_{x_0h(\theta x_0^\alpha)}^{+\infty}u^{\alpha}Q(\theta^{\frac{1}{\alpha}}t)e^{-2it}dt$$
 The estimates of $(1+u^{\alpha})^{-\frac12}u^{1-\frac{3+k}{\alpha}}T_{k+1}(u^{\alpha})$ and of $(1+(X(u))^{\alpha})^{-\frac12}(X(u))^{\alpha-3-k}T_{k+1}((X(u))^{\alpha})$ at $+\infty$ show that $K=O(\theta^{1+\frac{3+k}{\alpha}})$.
One then deduces

$$-\theta^{-1}J=\frac{1}{(2i)^{k+1}}\int_{x_0h(\theta x_0^{\alpha})}^{+\infty}(1+\theta t^{\alpha})^{-\frac12}t^{\alpha-3-k}T_{k+1}(\theta t^{\alpha})e^{-2it}dt+O(\theta^{1+\frac{3+k}{\alpha}}).$$
Concentrate now on $J_0:=\int_{x_0h(\theta x_0^{\alpha})}^{+\infty}(1+\theta t^{\alpha})^{-\frac12}t^{\alpha-3-k}T_{k+1}(\theta t^{\alpha})e^{-2it}dt$.
We check that, for $\alpha-2-k<0$,  $x_0h(\theta x_0^{\alpha})-x_0=O(\theta)$, and $t^{\alpha-3-k}\in L^1([x_0, +\infty[)$. Using the dominated convergence theorem, the limit of $J_0$ is easily obtained as being $\int_{x_0}^{+\infty}t^{\alpha-3-k}T_{k+1}(0)e^{-2it}dt$. To obtain the difference between $J_0$ and its limit, we perform integrations by parts on the integral defining $J_0$, in order to have $\alpha-2-m$ where $\alpha-2-m<-\alpha-2$, such that this difference is of order $\theta$. As it uses (\ref{eq:formula-ipp}) with a ``simplified" $\mathcal{L}$, we do not write the details. Finally,

\begin{equation}
\theta^{-1}J-\frac{1}{(2i)^{k+1}}\int_{x_0}^{+\infty}t^{\alpha-3-k}T_{k+1}(0)e^{-2it}dt=O(\theta).
\end{equation}
This proves the inequality
$$\int_{x_0}^{+\infty}\theta^{-1}M(y)e^{-2i\phi(y)}dy-\frac{1}{2i}T_k^S(x_0^{-1},0)-\frac{1}{(2i)^{k+1}}\int_{x_0}^{+\infty}t^{\alpha-3-k}T_{k+1}(0)e^{-2it}dt=O(\theta)$$
hence, for $\alpha>1$, using the inequality (\ref{in:alpha})
\begin{equation}\frac{R_{x_0}}{\theta}-\frac{1}{2i}T_k^S(x_0^{-1},0)-\frac{1}{(2i)^{k+1}}\int_{x_0}^{+\infty}t^{\alpha-3-k}T_{k+1}(0)e^{-2it}dt=O(\theta^{\frac{1}{\alpha}}).
\end{equation}
We thus proved Theorem \ref{th:refl:coeff}.

\section{The case $\alpha = 1$}
\label{sec-ramp}
%
%
%
%
%
As an illustration of the results of this paper, we recover the result of Theorem \ref{th:refl:coeff} in the model case
$$c^{-2}(x)=1+\theta x_+$$ 
associated with the equation
\begin{equation}\label{model-ramp}u''+(1+\theta x_+)u=0, u\in C^1.\end{equation} 
This case is easier, because we have an exact representation of the solutions in $[0, +\infty)$ using special functions, hence we may use the classical asymptotic results known for these functions.

Let us use $Ai$ and $Bi$ for the two classical solutions of the Airy equation $u''=xu$ (namely $Ai\in \mathcal{S}'(\R)$ is the inverse Fourier transform of $e^{i\frac{t^3}{3}}$ and $Bi$ is the unique solution of $u''(x)=xu$ such that $Bi(0)=\sqrt{3}Ai(0)$ and $Bi'(0)=-\sqrt{3}Ai'(0)$, see \cite{AS}).  

Introduce $w_{\pm}(X)=Ai( e^{\pm i\frac{\pi}{3}}X)$ a pair of satisfactory solutions\footnote{Equal to $u_{\pm}$ up to a multiplicative constant.} of $U''=-XU$, and recall that for $X\in \CC$, $\vert X\vert $ large:
$$Ai(X)\simeq \frac{1}{2} \pi^{-\frac12}X^{-\frac14}e^{-\frac23 X^{\frac32}}.$$
A pair of independent solutions of  (\ref{model-ramp})  in $x>0$ is 
 $(w_+(\beta (1+\theta x)), w_-(\beta (1+\theta x)))$, where $\beta^3\theta^2=1$, $\beta>0$ (that is $\beta=\theta^{-\frac23}$). In the region $x>0$, one has
 $$u(x)=Aw_+(\beta (1+\theta x))+Bw_-(\beta(1+\theta x)).$$

 We show in this paragraph that the decomposition of solutions of (\ref{model-ramp}) on $w_+$ and $w_-$ is the suitable decomposition to study outgoing and incoming solutions at $+\infty$. Indeed, we have 
 \begin{proposition} \label{plusinfini}The function $w_-$ is in the space of incoming solutions from $+\infty$,  and the function $w_+$ is in the space of outgoing solutions to $+\infty$ for (\ref{model-ramp}).
 
 The reflection coefficient $R$ is given by 
 $$R=\frac{w'_+(\beta)+i\beta^{\frac12}w_+(\beta)}{-w'_+(\beta)+i\beta^{\frac12}w_+(\beta)}.$$
 \end{proposition}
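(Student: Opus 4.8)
The plan is to first determine the polarization of $w_\pm$ by solving the complexified model equation (\ref{eq:model-sigma}) in $x>0$ explicitly with Airy functions, and then to read off $R$ from the impedance relation (\ref{eq:qR}) evaluated at $x=0$. These two steps are essentially independent: the first matches $w_\pm$ against Definition \ref{def:polarized-waves}, while the second is a short algebraic manipulation.

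For the polarization claim I would construct the $\sigma$-family directly. In $x>0$ equation (\ref{eq:model-sigma}) reads $u''+(1+i\sigma)^2(1+\theta x)u=0$, and the rescaling $u(x;\sigma)=Ai\!\left(e^{i\pi/3}\beta(1+i\sigma)^{2/3}(1+\theta x)\right)$, with $\beta=\theta^{-2/3}$ and the principal branch of the power, solves it: since $Ai$ solves $f''=Yf$, the scaling $\mu=\beta(1+i\sigma)^{2/3}$ is chosen exactly so that $\mu^3\theta^2=(1+i\sigma)^2$. The key computation is the large-$x$ behavior. Writing $Y=e^{i\pi/3}\beta(1+i\sigma)^{2/3}(1+\theta x)$ and using $\phi(x)=\tfrac{2}{3\theta}[(1+\theta x)^{3/2}-1]$, one finds $\tfrac23 Y^{3/2}=i(1+i\sigma)\big(\phi(x)+\tfrac{2}{3\theta}\big)$, so the Airy asymptotic $Ai(Y)\sim\tfrac12\pi^{-1/2}Y^{-1/4}e^{-\frac23 Y^{3/2}}$ produces the exponential $e^{-i(1+i\sigma)(\phi(x)+2/(3\theta))}$, whose modulus is $e^{\sigma\phi(x)}$ up to a constant, while $\arg Y\approx\pi/3$ stays safely inside the sector $|\arg Y|<\pi$ of validity of the expansion. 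Hence for $\sigma<0$ one has $u(x;\sigma)\to0$ as $x\to+\infty$, and $u(x;\sigma)\to w_+(\beta(1+\theta x))$ uniformly on compact sets as $\sigma\to0^-$, which by Definition \ref{def:polarized-waves} exhibits $w_+$ as outgoing to $+\infty$. The mirror computation with $e^{-i\pi/3}$ yields modulus $e^{-\sigma\phi(x)}$, decaying precisely for $\sigma>0$, so $w_-$ is incoming from $+\infty$. Equivalently, the same asymptotics show that $w_+(\beta(1+\theta x))/v^{>}(x)$ and $w_-(\beta(1+\theta x))/v^{<}(x)$ tend to nonzero finite limits, so one may instead invoke Proposition \ref{teo:coddington}(iii) together with the one-dimensionality of the polarized subspaces.

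For the formula, the outgoing solution of (\ref{model-ramp}) equals $u(x)=w_+(\beta(1+\theta x))$ on $x>0$ up to an irrelevant scalar, and since $\alpha=1>0$ the solution is $C^1$ across $x=0$, so (\ref{eq:qR}) may be evaluated from the $x>0$ expression. I would compute $u(0)=w_+(\beta)$ and $u'(0)=\beta\theta\,w_+'(\beta)$, then use the identity $\beta\theta=\beta^{-1/2}$ (immediate from $\beta^3\theta^2=1$) to get $u'(0)/u(0)=w_+'(\beta)/\big(\beta^{1/2}w_+(\beta)\big)$. Substituting this into $u'(0)/u(0)=i(R-1)/(R+1)$ and solving the resulting linear equation for $R$ gives $R=\big(w_+'(\beta)+i\beta^{1/2}w_+(\beta)\big)/\big(-w_+'(\beta)+i\beta^{1/2}w_+(\beta)\big)$, as claimed.

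The main obstacle is the rigor of the limiting-absorption step rather than any hard estimate: one must fix the branch of $(1+i\sigma)^{2/3}$ so that the argument of the Airy variable stays in the sector where the asymptotic expansion holds, uniformly as $\sigma\to0^{\mp}$, and confirm that $u(\cdot;\sigma)\to w_\pm(\beta(1+\theta x))$ converges uniformly on compact sets, which follows from continuity of $Ai$ in its argument. Everything else is routine bookkeeping of the phase $\phi$ and of the relation $\beta\theta=\beta^{-1/2}$.
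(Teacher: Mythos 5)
Your proposal is correct and follows essentially the same route as the paper: both identify the polarization of $w_\pm$ by complexifying the Airy argument via $\beta \mapsto \beta(1+i\sigma)^{2/3}$ and reading off the sign of the real part of the exponent $-\tfrac23 Y^{3/2}$ from the classical $Ai$ asymptotics, and both then obtain the formula for $R$ from $C^1$ matching at $x=0$ together with $\beta\theta=\beta^{-1/2}$ (the paper writes the matching system in terms of the transmission coefficient $T$ and eliminates it, which is exactly the content of the impedance relation (\ref{eq:qR}) you invoke).
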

 \begin{proof}
  Consider $x\rightarrow w_{\pm}((1+i\sigma)^{\frac23}\beta (1+\theta x))$. These two functions are solution of (\ref{eq:model-sigma}), and in addition, converge, pointwise,  when $\sigma \rightarrow 0_-$ to $x\rightarrow  w_{\pm}(\beta (1+\theta x))$.
 
  The deformation $(1+\theta x)\rightarrow (1+i\sigma)(1+\theta x)$ transforms $\beta$ into ${\tilde \beta}$ such that ${\tilde \beta}^3\theta^2=(1+i\sigma)^2$, that is  ${\tilde \beta}= \beta (1+i\sigma)^{\frac23}$, and $-\frac23X^{\frac32}$  (which is the argument of the phase of $Ai$) is equal to $({\tilde \beta}(1+\theta x))^{\frac32}= (1+i\sigma)\beta^{\frac32}(1+\theta x)^{\frac32}$.Use 10.4.59 of \cite{AS}. Noticing that $\beta\theta>0$, and observing that
$$\Re (-\frac23 (e^{\pm i\frac{\pi}{3}}\theta^{\frac13}(x+\theta^{-1})(1+i\sigma)^{\frac23})^{\frac32}=\pm \frac23 \sigma (\theta^{\frac13}(x+\theta^{-1}))^{\frac32},$$
one checks that $\vert Ai(e^{-i\frac{\pi}{3}}\theta^{\frac13}
(1+i\sigma)^{\frac23}(x+\theta^{-1}))\vert\rightarrow +\infty$ for $\sigma<0$, hence for each $\sigma<0$, the family of solutions of (\ref{eq:model-sigma}) which go to zero when $x\rightarrow +\infty$ is 
$$A_+Ai(e^{i\frac{\pi}{3}}\theta^{\frac13}(1+i\sigma)^{\frac23}(x+\theta^{-1})).$$
This describes the space of solutions of (\ref{eq:model-problem}) which are outgoing at $+\infty$, that is $x\rightarrow w_+(\beta(1+\theta x))$ generates the space $\mathcal{U}^{>}_{+\infty}$.
 
 
 Since
 \[
 \frac{w_+(\beta(1 + \theta x))}{(1+\theta x)^{-\frac{1}{4}} e^{- \frac{2}{3} \theta^{-1/2} (1+\theta x)^{3/2}}} := \frac{w_+(\beta(1 + \theta x))}{b(x) e^{-i \varphi(x)}}
 \]
 tends to $\frac{1}{2} \pi^{-\frac{1}{2}} e^{- i \frac{\pi}{12}} \theta^{\frac{1}{6}}$ as $x \to \infty$, the unique normalized outgoing solution (in $\mathcal{U}^{>}_{+\infty}$) is 
 \[
 2 \pi^{\frac{1}{2}} e^{i \frac{\pi}{12}} \theta^{- \frac{1}{6}} w_+(\beta(1+\theta x)).
 \]
 
 The outgoing solution to $-\infty$ of (\ref{model-ramp}) is $e^{-ix}$, the incoming solution from $-\infty$ of (\ref{model-ramp}) is $e^{ix}$, hence one has:\\
 $\bullet$ in $]-\infty, 0[$, $u(x)= e^{-ix}+Re^{ix}$,\\
 $\bullet$ in $]0, +\infty[$, $u(x)= Tw_+(\beta (1+\theta x))$.\\
 The solution is $C^1(\mathbb R)$, hence 
 $$\left\{\begin{array}{l}1+R=Tw_+(\beta)\cr
 i(R-1)=\beta \theta Tw'_+(\beta)\end{array}\right.$$
 The relation $\beta\theta= \beta^{-\frac12}$ ends the proof of Proposition \ref{plusinfini}. \end{proof}
 Using estimates on the Airy functions, one proves now
 \begin{lemma} There exists $\theta_0$ and $C$ such that, for $0<\theta<\theta_0$
 \label{estimation-layer}
  \begin{equation} \vert R-\frac{i\theta}{8}\vert\leq C\theta^2.\end{equation}
  \end{lemma}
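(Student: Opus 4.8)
The plan is to substitute the exact formula for $R$ from Proposition \ref{plusinfini} into the classical large-argument asymptotics of the Airy function and extract the leading term from a cancellation in the numerator. Writing $z=e^{i\pi/3}\beta$, we have $w_+(\beta)=Ai(z)$ and $w'_+(\beta)=e^{i\pi/3}Ai'(z)$, so that
$$R=\frac{e^{i\pi/3}Ai'(z)+i\beta^{\frac12}Ai(z)}{-e^{i\pi/3}Ai'(z)+i\beta^{\frac12}Ai(z)}.$$
Since $\beta=\theta^{-2/3}\to+\infty$ as $\theta\to 0$ and $z$ has argument $\pi/3<\pi$, the expansions (10.4.59--10.4.61 of \cite{AS}) apply with $\zeta=\tfrac23 z^{3/2}=\tfrac23 i\beta^{3/2}=\tfrac{2i}{3\theta}$. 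First I would form the logarithmic derivative
$$\frac{Ai'(z)}{Ai(z)}=-z^{\frac12}\,(1+r),\qquad r=\frac{u_1-v_1}{\zeta}+O(\zeta^{-2})=\frac{1}{6\zeta}+O(\zeta^{-2}),$$
where $u_1=\tfrac{5}{72}$ and $v_1=-\tfrac{7}{72}$ are the leading correction coefficients of the two series; the ratio of the asymptotic series produces precisely the combination $u_1-v_1=\tfrac16$ at first order.

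The key algebraic observation is the phase identity $e^{i\pi/3}z^{1/2}=e^{i\pi/3}e^{i\pi/6}\beta^{1/2}=i\beta^{1/2}$. Substituting $e^{i\pi/3}Ai'(z)=-i\beta^{1/2}(1+r)Ai(z)$ into the formula for $R$, the two $\pm\, i\beta^{1/2}Ai(z)$ contributions cancel at leading order in the numerator while they add in the denominator, so that
$$R=\frac{-i\beta^{\frac12}Ai(z)\,r}{i\beta^{\frac12}Ai(z)\,(2+r)}=-\frac{r}{2+r}=-\frac{r}{2}+O(r^2).$$
Finally I would evaluate $r=\tfrac{1}{6\zeta}+O(\zeta^{-2})=\tfrac16\cdot\tfrac{3\theta}{2i}+O(\theta^2)=\tfrac{\theta}{4i}+O(\theta^2)$, whence $R=-\tfrac12\cdot\tfrac{\theta}{4i}+O(\theta^2)=\tfrac{i\theta}{8}+O(\theta^2)$, which is exactly the claimed bound $|R-\tfrac{i\theta}{8}|\le C\theta^2$.

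The hard part is not the leading term but the rigorous control of the remainder. One must use that the expansions in \cite{AS} are genuine asymptotic expansions with error bounds of the form $|Ai(z)-(\text{first }N\text{ terms})|\le C_N|\zeta|^{-N}|Ai(z)|$, valid uniformly on the ray $\arg z=\pi/3$, so that the error in $r$ is truly $O(\zeta^{-2})=O(\theta^2)$ rather than a formal estimate. I would also verify that $Ai(z)\neq 0$ for $z=e^{i\pi/3}\beta$ (the zeros of $Ai$ lie on the negative real axis) and that the denominator $i\beta^{1/2}Ai(z)(2+r)$ stays bounded away from zero for small $\theta$, so that the division is legitimate and uniform; both are immediate once $r=O(\theta)$ is established. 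The remaining bookkeeping — propagating the $O(r^2)$ and $O(\zeta^{-2})$ terms through the quotient $-r/(2+r)$ — is routine.
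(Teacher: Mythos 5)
Your proof is correct and takes essentially the same route as the paper: both start from the exact formula of Proposition \ref{plusinfini}, expand the logarithmic derivative $Ai'/Ai$ along the ray $\arg z=\pi/3$ (your $-z^{1/2}(1+r)$ with $r=\frac{1}{6\zeta}+O(\zeta^{-2})$ is the same expansion as the paper's $-\sqrt{u}-\frac{1}{4u}+O(u^{-5/2})$), and exploit the cancellation coming from $e^{i\pi/3}z^{1/2}=i\beta^{1/2}$ so that the numerator is $O(\beta^{-1})$ while the denominator is $\sim 2i\beta^{1/2}$. If anything, you carry the argument further than the paper's terse proof, which stops at the rewritten quotient for $R$ and leaves the final extraction of $\frac{i\theta}{8}+O(\theta^2)$ implicit.
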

  \begin{proof}
The asymptotic expansion 
 $$\ln Ai(u)= \ln C-\frac14 \ln u-\frac23 u^{\frac32}+\ln (\sum_{k=0}^{\infty} (-1)^kc_k(\frac23 u^{\frac{3}{2}})^{-k}), \vert u \vert >1$$
leads to
 $$\frac{Ai'(u)}{Ai(u)}= -\sqrt{u}-\frac{1}{4u}+u^{\frac12}\frac{\sum_{k=1}^{\infty} (-1)^{k+1}kc_k(\frac23 u^{\frac{3}{2}})^{-k-1}}{\sum_{k=0}^{\infty} (-1)^kc_k(\frac23 u^{\frac{3}{2}})^{-k}}.$$
From this expression, and for $\vert u\vert \geq 1$, we get the inequality
 \be\label{ineq:airy}\vert\frac{Ai'(u)}{Ai(u)}+\sqrt{u}+\frac{1}{4u}\vert\leq \frac{C}{u^{\frac52}}\ee
 which is valid also in the complex region $\vert \arg(u)\vert <\pi$. An estimate of $R$ can then be obtained as
 $$R=\frac{\frac{w'_+(\beta)}{w_+(\beta)}+i\beta^{\frac12}}{ -\frac{w'_+(\beta)}{w_+(\beta)}+i\beta^{\frac12}}= \frac{e^{-i\frac{\pi}{3}}\frac{w'_+(\beta)}{w_+(\beta)}+\sqrt{e^{i\frac{\pi}{3}}\beta}}{ -e^{-i\frac{\pi}{3}}\frac{w'_+(\beta)}{w_+(\beta)}+\sqrt{e^{i\frac{\pi}{3}}\beta}}.$$
 \end{proof}
 
 The resulting estimate in $R$ turns out to be a particular case of \ref{th:refl:coeff} and the value of $R$ for $\alpha\rightarrow 1_+$ matches the one obtained directly for $\alpha=1$. For $1\leq \alpha<2$, the reflection operator is of order $-\alpha$, where $\alpha>1$ is the fractional regularity of $c_0^{-2}(x_1)$ at $x_1=0$. In the case $\alpha=1$ it  can also be expressed through Jost integrals.

\appendix
\section{Proof of the existence of polarized waves}
\label{sec:appendixA}
The existence of polarized waves (and the fact that the dimension of
${\mathcal U}^{>}_{+\infty}$ and of ${\mathcal U}^{>}_{+\infty}$ is 1)
rely on Lemma \ref{coddington} below. As $c>0$, define the new
variable $y=\phi(x)$ such that $y(0)=0$ and $\frac{dy}{dx}= c^{-1} (x)$. 
The equation (\ref{eq:model-sigma}) is equivalent to the equation on $U(y)= c^{-\frac12}(x(y))u(x(y))$
\be \partial^2_yU=[(\sigma -i)^2+\epsilon(y)]U\label{eq:reduit-sigma},\ee
where $\epsilon(y)=c(x(y))M(x(y))$.  A necessary condition which ensures that the behavior of the solutions of (\ref{eq:reduit-sigma}) is well predicted by the limiting system at infinity is 
\\
\centerline{ (H) There exists $y_0\geq 0$ such that $\int_{y_0}^{+\infty}\vert \epsilon(y)\vert dy<+\infty$. }\\
We note that
$$\int_{y_0}^{+\infty}\vert \epsilon(y)\vert dy= \int_{y_0}^{+\infty}c(x(y))\vert M(x(y))\vert dy= \int_{x(y_0)}^{+\infty}c(x)\vert M(x)\vert c^{-1}(x)dx=\int_{x(y_0)}^{+\infty}\vert M(x)\vert dx,$$
\begin{lemma}
\label{coddington}
Let $\sigma<0$. Under the assumption (H), the system $$\partial_y\left(\ba{c}U\cr
\partial_yU\ea\right)=\left(\ba{cc}0&1\cr
(\sigma-i)^2+\epsilon(y)&0\ea\right)\left(\ba{c}U\cr
\partial_yU\ea\right)$$  satisfies the conditions of Theorem 8.1 of chapter 3 of Coddington-Levinson, hence there exists a solution $W^{\sigma}_+(y)$  of it such that
$$W^{\sigma}_+(y)e^{(i-\sigma)y}\rightarrow \left(\ba{c}1\cr
\sigma-i\ea\right) , y\rightarrow +\infty,$$ and a solution $W^{\sigma}_-(y)$ of it such that $$W^{\sigma}_-(y)e^{(\sigma-i)y}\rightarrow \left(\ba{c}1\cr
-\sigma+i\ea\right) , y\rightarrow +\infty.$$
The solutions of this system are $AW_+^{\sigma}(y)+BW_-^{\sigma}(y)$.
\end{lemma}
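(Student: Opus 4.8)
The plan is to recognize this statement as a direct instance of the classical asymptotic-integration theory for a linear system whose coefficient matrix is a \emph{constant} matrix plus an $L^1$ perturbation, so that the whole proof reduces to checking the hypotheses of Coddington--Levinson Theorem 8.1 and translating its conclusion. First I would split the coefficient matrix as $\cA + V(y)$ with
\[
\cA = \begin{pmatrix} 0 & 1 \\ (\sigma-i)^2 & 0 \end{pmatrix}, \qquad V(y) = \begin{pmatrix} 0 & 0 \\ \epsilon(y) & 0 \end{pmatrix},
\]
so that $\cA$ is the $\epsilon \equiv 0$ limit and $V$ carries all the $y$-dependence. Since $V$ has the single nonzero entry $\epsilon(y)$, its matrix norm is comparable to $|\epsilon(y)|$, and the required integrability $\int_{y_0}^{+\infty} \|V(y)\|\,dy < +\infty$ is \emph{exactly} hypothesis (H); as already computed in the excerpt this equals $\int_{x(y_0)}^{+\infty} |M(x)|\,dx$, which is finite because $M$ is $O(\theta)$ and integrable at $+\infty$.

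Next I would compute the spectral data of $\cA$. Its characteristic polynomial is $\lambda^2 - (\sigma-i)^2$, so the eigenvalues are $\lambda_\pm = \pm(\sigma-i)$ with eigenvectors $v_\pm = (1,\lambda_\pm)^{\top}$, i.e. $v_+ = (1,\sigma-i)^{\top}$ and $v_- = (1,-\sigma+i)^{\top}$. Because $\sigma$ is real we have $\sigma - i \neq 0$, hence $\lambda_+ \neq \lambda_-$ and the eigenvalues are distinct. This is the point that triggers the theorem: for a constant $\cA$ with distinct eigenvalues the Levinson dichotomy condition is automatic (the real parts $\pm\sigma$ are constants, so $\operatorname{Re}\int(\lambda_i-\lambda_j)$ is monotone), and combined with $V \in L^1$ the theorem yields, for each eigenvalue, a solution whose leading behaviour is the corresponding exponential times its eigenvector,
\[
W^\sigma_\pm(y) = e^{\lambda_\pm y}\bigl(v_\pm + o(1)\bigr), \qquad y \to +\infty.
\]
Substituting $\lambda_+ = \sigma - i$ and $\lambda_- = -\sigma + i$ and multiplying by $e^{-\lambda_\pm y}$ reproduces exactly the two stated limits $W^\sigma_+(y)e^{(i-\sigma)y} \to (1,\sigma-i)^{\top}$ and $W^\sigma_-(y)e^{(\sigma-i)y} \to (1,-\sigma+i)^{\top}$.

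Finally, the eigenvectors $v_+, v_-$ are linearly independent (distinct eigenvalues), and the distinct exponential rates $e^{\lambda_\pm y}$ rule out any nontrivial proportionality between $W^\sigma_+$ and $W^\sigma_-$; hence the pair is a fundamental system for the two-dimensional solution space, so every solution is $A\,W^\sigma_+ + B\,W^\sigma_-$. I do not expect a genuine analytic obstacle here: the only substantive input is (H) guaranteeing the $L^1$ smallness of the perturbation, and everything else is bookkeeping. The one place demanding a little care is confirming that constancy of $\cA$ trivializes the dichotomy hypothesis of the Coddington--Levinson theorem, and transporting its conclusion---stated after the diagonalization $P^{-1}\cA P = \operatorname{diag}(\lambda_+,\lambda_-)$ with $P = [\,v_+ \mid v_-\,]$ via $Y = PZ$---back to the original coordinates so that the eigenvector normalizations match the precise limits claimed in the statement.
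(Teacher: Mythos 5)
Your proposal is correct and follows exactly the route the paper intends: the paper's ``proof'' of this lemma is simply the citation of Coddington--Levinson (Chapter 3, Theorem 8.1), with hypothesis (H) providing the $L^1$ bound on the perturbation, and you have merely made explicit the verification (splitting off the constant matrix, computing the distinct eigenvalues $\pm(\sigma-i)$ and eigenvectors $(1,\pm(\sigma-i))^{\top}$, and matching the normalizations in the stated limits). No gaps; your added care about the dichotomy condition and the linear independence of $W^{\sigma}_{\pm}$ is consistent with what the paper leaves implicit.
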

We may thus deduce
\begin{cor}
\label{cod}
Let $\sigma<0$. The space of solutions of (\ref{eq:model-sigma}) which
tend to 0 at $+\infty$ is of dimension 1, generated by
$c^{\frac12}(x)U_+^{\sigma}(y(x))$, where $U_+^{\sigma}$ is the first
component of $W^{\sigma}_+$ . It is the same as the space of
solutions of (\ref{eq:model-sigma}) which are bounded on $[x_0, +\infty[$.
 
\end{cor}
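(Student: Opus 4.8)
The plan is to transport the decaying/growing dichotomy of the solutions $W_\pm^\sigma$ furnished by Lemma \ref{coddington} back to the original variable through the change of variables $u(x)=c^{\frac12}(x)\,U(\phi(x))$, $U(y)=c^{-\frac12}(x(y))u(x(y))$, which is a linear bijection between solutions of (\ref{eq:model-sigma}) and solutions of the reduced equation (\ref{eq:reduit-sigma}). Since hypothesis (H) has already been verified, Lemma \ref{coddington} applies and gives a basis $W_+^\sigma,W_-^\sigma$; I will write $U_+^\sigma,U_-^\sigma$ for their first components, so that the general solution of (\ref{eq:reduit-sigma}) is $U=A\,U_+^\sigma+B\,U_-^\sigma$.

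First I would record the asymptotics. From the normalizations in Lemma \ref{coddington} one reads off $U_+^\sigma(y)\sim e^{(\sigma-i)y}$ and $U_-^\sigma(y)\sim e^{(-\sigma+i)y}$ as $y\to+\infty$. Because $\sigma<0$, the first is exponentially decaying ($|U_+^\sigma|\sim e^{\sigma y}\to0$) and the second exponentially growing ($|U_-^\sigma|\sim e^{-\sigma y}\to+\infty$). Moreover $\phi$ is an increasing diffeomorphism of $\R$ with $\phi(x)\to+\infty$ as $x\to+\infty$, so the behavior of $u$ at $x=+\infty$ is governed by the behavior of $U$ at $y=+\infty$.

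Next I would push these asymptotics through the factor $b=c^{\frac12}$. On the one hand $b\le1$, since $c^{-2}=1+\theta x_+^\alpha\ge1$; hence $u_+:=b\,U_+^\sigma$ obeys $|u_+|\le|U_+^\sigma|\to0$, so $u_+$ is a nonzero solution of (\ref{eq:model-sigma}) tending to $0$. On the other hand the growing part contributes $b\,U_-^\sigma$, of magnitude $\sim b(x)e^{-\sigma\phi(x)}$, and here I would invoke the limit already established in the proof of Proposition \ref{teo:Ssigma}: $b^{-1}(x)e^{\sigma\phi(x)}\to0$ for $\sigma<0$, equivalently $b(x)e^{-\sigma\phi(x)}=\big(b^{-1}(x)e^{\sigma\phi(x)}\big)^{-1}\to+\infty$. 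Thus whenever $B\ne0$ one has $|u(x)|\sim|B|\,b(x)e^{-\sigma\phi(x)}\to+\infty$, so $u$ is unbounded, whereas $B=0$ gives $u=A\,u_+\to0$.

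Combining these facts, a solution $u$ of (\ref{eq:model-sigma}) tends to $0$ at $+\infty$ if and only if $B=0$, if and only if it is bounded on $[x_0,+\infty[$ (boundedness on each compact $[x_0,X]$ being automatic by continuity of $u$). The common space is therefore $\{A\,b(x)\,U_+^\sigma(\phi(x)):A\in\mathbb{C}\}$, of dimension $1$, which is exactly the assertion of Corollary \ref{cod}. The only genuinely substantive point is the competition between the decay of $b$ and the exponential growth $e^{-\sigma\phi}$; this is settled once and for all by the limit $b^{-1}e^{\sigma\phi}\to0$, so no additional estimate is required.
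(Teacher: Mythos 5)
Your proof is correct and follows essentially the same route as the paper's: invoke Lemma \ref{coddington} for the basis $W_\pm^\sigma$, transport it through the Liouville change of variables, and conclude that $B\neq0$ forces blow-up while $B=0$ gives decay. The only difference is cosmetic: you justify explicitly (via the limit $b^{-1}e^{\sigma\phi}\to0$ from the proof of Proposition \ref{teo:Ssigma}) that the exponential growth of $U_-^\sigma$ beats the decay of the factor $b=c^{1/2}$, a point the paper's proof asserts without spelling out.
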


Note that
$\partial^2_yU=[(\sigma -i)^2+\epsilon(y)]U$ is equivalent to the
system of Lemma \ref{coddington}. All solutions of this system are
$W(y)=AW_+^{\sigma}(y)+BW_-^{\sigma}(y)$, because $W_+^{\sigma}$ and
$W_-^{\sigma}$ are linearly independant. Hence $u(x)=c^{\frac12}(x)[AU_+^{\sigma}(y(x))e^{(i-\sigma)\phi(x)}e^{2(\sigma-i)\phi(x)}+BU_-^{\sigma}(y(x))e^{(\sigma-i)\phi(x)}]e^{(i-\sigma)\phi(x)}$. Hence the limit of $\vert u(x)\vert$ is not finite if $B\not=0$. Hence if $u$ has a finite limit then $B=0$, hence $u(x)=Ac^{\frac12}(x)U_+^{\sigma}(y(x))$. This function goes to 0 when $x$ goes to $+\infty$. The corollary is proved.\\
\section{Properties of the function $M$}
\label{sec:appendixB}
A key lemma for the normal convergence of the Volterra series of Section \ref{sec:Volterra} is the following
\begin{lemma} \label{ineq:Mx0}For all $x_0>0$, there exists $\theta_0(x_0)$ such that, for all $\theta<\theta_0(x_0)$, $M_{x_0}<2$.
\end{lemma}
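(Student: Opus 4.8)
The plan is to compute $M$ explicitly and thereby reduce $M_{x_0}$ to two elementary integrals, each of which I show tends to $0$ as $\theta \to 0$; in particular there is a threshold $\theta_0(x_0)$ below which $M_{x_0} < 2$. In fact the argument yields the stronger conclusion $M_{x_0}\to 0$, which is what the later sections use.

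First I would compute $M = c^{1/2}(c^{1/2})''$ from $c^{-2}(x) = 1 + \theta x^\alpha$, i.e. $c^{1/2} = (1+\theta x^\alpha)^{-1/4}$ on $x>0$. Writing $h(x) = 1 + \theta x^\alpha$, so that $h' = \theta\alpha x^{\alpha-1}$ and $h'' = \theta\alpha(\alpha-1)x^{\alpha-2}$, a direct differentiation gives
\[
M(x) = \frac{5}{16}\,\theta^2\alpha^2\, x^{2\alpha-2}(1+\theta x^\alpha)^{-5/2} - \frac14\,\theta\alpha(\alpha-1)\, x^{\alpha-2}(1+\theta x^\alpha)^{-3/2}.
\]
Hence $|M(x)| \le \frac{5}{16}\theta^2\alpha^2 x^{2\alpha-2}(1+\theta x^\alpha)^{-5/2} + \frac14\theta\alpha|\alpha-1|\, x^{\alpha-2}(1+\theta x^\alpha)^{-3/2}$, and it suffices to bound the integral of each of these two non-negative terms over $[x_0,+\infty)$.

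Next I would rescale via $t = \theta^{1/\alpha} x$, under which $\theta x^\alpha = t^\alpha$ and the lower limit becomes $\theta^{1/\alpha} x_0$. Tracking the powers of $\theta$ shows that the second (dominant) term contributes
\[
\frac14\alpha|\alpha-1|\,\theta^{1/\alpha}\int_{\theta^{1/\alpha}x_0}^{+\infty}(1+t^\alpha)^{-3/2}\, t^{\alpha-2}\,dt,
\]
while the first contributes $\frac{5}{16}\alpha^2\,\theta^{1/\alpha}\int_{\theta^{1/\alpha}x_0}^{+\infty}(1+t^\alpha)^{-5/2}t^{2\alpha-2}\,dt$. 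At $t = +\infty$ both integrands decay like $t^{-\alpha/2-2}$, an exponent strictly below $-1$ for every $\alpha > 0$, so the tails are finite; this already establishes $M_{x_0} < \infty$ for all $\alpha>0$.

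The only delicate point is the lower endpoint when $\alpha$ is small, since $t^{\alpha-2}$ is non-integrable at $0$ for $\alpha \le 1$ (resp.\ $t^{2\alpha-2}$ for $\alpha \le 1/2$); here the strictly positive limit $\theta^{1/\alpha}x_0$ is essential. For $\alpha > 1$ (resp.\ $\alpha > 1/2$) the integral converges to a finite constant as $\theta \to 0$, so the term is $O(\theta^{1/\alpha}) \to 0$. For $\alpha \le 1$ one estimates $\int_{\theta^{1/\alpha}x_0}^{1} t^{\alpha-2}\,dt = O\!\big(\theta^{(\alpha-1)/\alpha}\big)$, and multiplying by the prefactor $\theta^{1/\alpha}$ gives $O(\theta)$; the analogous accounting for the first term gives $O(\theta^2)$. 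In every case each term is a strictly positive power of $\theta$ and hence vanishes as $\theta\to 0$, so $M_{x_0}\to 0$; choosing $\theta_0(x_0)$ so that $M_{x_0}<2$ for all $\theta<\theta_0(x_0)$ proves the lemma. The main obstacle is thus purely this endpoint bookkeeping — reconciling the competing powers $\theta^{1/\alpha}$ and $\theta^{(\alpha-1)/\alpha}$ so as to confirm the product is a positive power of $\theta$ uniformly in $\alpha>0$.
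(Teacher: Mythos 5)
Your proof is correct and follows essentially the same route as the paper's (Appendix B): an explicit computation of $M$ --- your two-term formula is exactly the paper's factored form $M(x)=-\theta x_{+}^{\alpha-2}T_0(\theta x_{+}^{\alpha})$ expanded out --- followed by the rescaling $t=\theta^{1/\alpha}x$, the $t^{-\alpha/2-2}$ tail decay, and endpoint bookkeeping yielding $M_{x_0}=O(\theta^{1/\alpha})$ for $\alpha>1$ and $O(\theta)$ for $\alpha<1$, hence $M_{x_0}\to 0$. The only blemishes are at the boundary values $\alpha=1$ (where the claimed endpoint estimate $O(\theta^{(\alpha-1)/\alpha})$ is actually logarithmic, but the term is annihilated by the factor $\alpha(\alpha-1)=0$) and $\alpha=\tfrac12$ (where the first term is $O(\theta^2\ln(1/\theta))$ rather than $O(\theta^2)$); in both cases the conclusion $M_{x_0}<2$ for $\theta<\theta_0(x_0)$ is unaffected.
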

It is a consequence of
\begin{lemma} For all $x_0$, there exists $\theta_*$ and $I_*$ such that for all $\theta<\theta_*$
\label{bound-Mx0} $$\ba{l}M_{x_0}\leq \theta^{\frac{1}{\alpha}}I_*, \alpha>1\cr
M_{x_0}\leq \theta I_*, 0<\alpha<1.\ea$$
\end{lemma}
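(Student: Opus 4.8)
The plan is to work from the explicit form of $M$. Writing $p(x)=1+\theta x^\alpha$ so that $c^{1/2}=p^{-1/4}$ by (\ref{c-fractional}), two differentiations give, via (\ref{def:M}),
\[
M(x)=\frac{5}{16}p^{-5/2}(p')^2-\frac14 p^{-3/2}p''
=\frac{5\alpha^2}{16}\,\theta^2 x^{2\alpha-2}(1+\theta x^\alpha)^{-5/2}
-\frac{\alpha(\alpha-1)}{4}\,\theta x^{\alpha-2}(1+\theta x^\alpha)^{-3/2}.
\]
Both factors $(1+\theta x^\alpha)^{-5/2}$ and $(1+\theta x^\alpha)^{-3/2}$ are at most $1$, and the whole estimate is governed by the competition between the monomials $x^{2\alpha-2}$, $x^{\alpha-2}$ and the extra decay these factors provide once $\theta x^\alpha\gg 1$. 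This is what produces the two regimes in the statement.

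For the regime $0<\alpha<1$ I would argue directly, without rescaling. The second term is bounded by $\tfrac{\alpha(1-\alpha)}{4}\,\theta x^{\alpha-2}$. For the first I would use the elementary bound $\theta x^\alpha(1+\theta x^\alpha)^{-5/2}\le \theta x^\alpha(1+\theta x^\alpha)^{-1}\le 1$, valid for all $\theta>0$, to write
\[
\frac{5\alpha^2}{16}\,\theta^2 x^{2\alpha-2}(1+\theta x^\alpha)^{-5/2}
=\frac{5\alpha^2}{16}\,\theta x^{\alpha-2}\cdot\theta x^\alpha(1+\theta x^\alpha)^{-5/2}
\le \frac{5\alpha^2}{16}\,\theta x^{\alpha-2}.
\]
Hence $|M(x)|\le C_\alpha\,\theta x^{\alpha-2}$ with $C_\alpha=\tfrac{5\alpha^2}{16}+\tfrac{\alpha(1-\alpha)}{4}$, and since $\alpha-2<-1$ the tail $x^{\alpha-2}$ is integrable at $+\infty$: integrating from $x_0$ gives $M_{x_0}\le C_\alpha\,\tfrac{x_0^{\alpha-1}}{1-\alpha}\,\theta=:I_*\theta$, for every $\theta>0$.

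For the regime $\alpha>1$ the naive pointwise bound fails, because $x^{\alpha-2}$ is no longer integrable at $+\infty$, so the decay hidden in $(1+\theta x^\alpha)^{-k}$ must be used. I would make it explicit by the scaling substitution $t=\theta^{1/\alpha}x$. Since $\theta x^\alpha=t^\alpha$ and both $\theta^2 x^{2\alpha-2}$ and $\theta x^{\alpha-2}$ equal $\theta^{2/\alpha}$ times $t^{2\alpha-2}$ and $t^{\alpha-2}$ respectively, one gets $M(x)=\theta^{2/\alpha}\Phi(\theta^{1/\alpha}x)$ with the $\theta$-independent profile
\[
\Phi(t)=\frac{5\alpha^2}{16}(1+t^\alpha)^{-5/2}t^{2\alpha-2}
-\frac{\alpha(\alpha-1)}{4}(1+t^\alpha)^{-3/2}t^{\alpha-2}.
\]
Changing variables in $M_{x_0}=\int_{x_0}^\infty|M|\,dx$ then yields
\[
M_{x_0}=\theta^{1/\alpha}\int_{\theta^{1/\alpha}x_0}^{\infty}|\Phi(t)|\,dt
\le \theta^{1/\alpha}\int_0^\infty|\Phi(t)|\,dt,
\]
so it suffices to check $I_*:=\int_0^\infty|\Phi(t)|\,dt<\infty$. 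Near $t=0$ the dominant term is $-\tfrac{\alpha(\alpha-1)}{4}t^{\alpha-2}$, integrable exactly because $\alpha>1$; near $t=\infty$, using $(1+t^\alpha)^{-k}\sim t^{-k\alpha}$, both terms are $O(t^{-\alpha/2-2})$, integrable since $-\alpha/2-2<-1$. Hence $I_*<\infty$ and $M_{x_0}\le I_*\,\theta^{1/\alpha}$.

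The main obstacle is precisely this $\alpha>1$ case: the leading pointwise behaviour $M\approx-\tfrac{\alpha(\alpha-1)}{4}\theta x^{\alpha-2}$ has a non-integrable tail, and the gain only comes from the transition region $x\sim\theta^{-1/\alpha}$ where $(1+\theta x^\alpha)^{-k}$ starts to decay. The substitution $t=\theta^{1/\alpha}x$ is tailored to this crossover: it turns the whole estimate into a single finite $\theta$-independent integral and extracts the sharp power $\theta^{1/\alpha}$, leaving only the two integrability checks for $\Phi$ at $0$ and $\infty$, both of which hinge on $\alpha>1$.
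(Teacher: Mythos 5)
Your proof is correct and follows essentially the same route as the paper's: the explicit computation of $M$ (your $\Phi(t)$ is exactly the paper's $-t^{\alpha-2}T_0(t^{\alpha})$), the direct pointwise bound $|M|\leq C_\alpha\,\theta x^{\alpha-2}$ integrated over $[x_0,\infty)$ for $0<\alpha<1$, and the scaling substitution $t=\theta^{1/\alpha}x$ for $\alpha>1$ reducing everything to a finite $\theta$-independent integral, with the same two integrability checks (at $0$ using $\alpha>1$, at $\infty$ using the $O(t^{-\alpha/2-2})$ decay). No discrepancies worth noting.
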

\paragraph{Proof}One obtains that $M$, defined by (\ref{def:M}), is equal to
$$M(x)=-\frac14 \theta \alpha x_+^{\alpha-2}c^3[\alpha-1-\frac54 \theta \alpha c^2x_+^{\alpha}]:=-\theta x_+^{\alpha-2}T_0(\theta x_+^{\alpha})$$
where $T_0(X)=(\frac{\alpha(\alpha-1)}{4}-\alpha^2\frac{X}{1+X})(1+X)^{-\frac32}$
in the case $\lambda(x)=x$. This expression holds also for the inhomogeneous case (\ref{eq:c_lambdaofx}) (changing the function $T_0$ accordingly:
for $\lambda(x)=\int_0^x\chi(y)dy$ hence $b(x)=B_{1-\eta^2}(\theta x^{\alpha})$, 
\begin{equation}
T_0(X)=\frac14 (B_{1-\eta^2}(X))^6[\alpha(\alpha-1)\chi(X(1-\eta^2))+\alpha X(1-\eta^2)[\chi'(X(1-\eta^2))-\frac54 (\chi(X(1-\eta^2)))^2(B_{1-\eta^2}(X))^4]],
\end{equation}
which is compactly supported).\\
One obtains, for $\alpha>1$
$$\int_{x_0}^{+\infty}\vert M(y)\vert dy=\theta^{\frac{1}{\alpha}}I.$$
The integral $I$ is equal to $\int_{x_0 \theta^{\frac{1}{\alpha}}}^{+\infty} z^{\alpha-2}\vert T_0(z^{\alpha})\vert dz$.
As $\alpha>1$ and as there exists a constant $C$ such that $z^{\alpha-2}T_0(z^{\alpha})\leq Cz^{-\frac12\alpha-2}$ for $z>1$, the integral  $\int_0^{+\infty} z^{\alpha-2}\vert T_0(z^{\alpha})\vert dz$ is finite and is a majorant of $I$ (in the case $\lambda(x)=x$). In the case (\ref{eq:c_lambdaofx})  a majorant of $I$ is $\int_0^{z_0}z^{\alpha-2}\vert T_0(z^{\alpha})\vert dz$. There exists a constant $I_*$ such that
$$\int_{x_0}^{+\infty}\vert M(y)\vert dy\leq \theta^{\frac{1}{\alpha}}I_*.$$
In the case $0<\alpha<1$, one has $\vert M(x)\vert\leq \theta x_+^{\alpha-2}\mbox{max}\vert T_0\vert$ hence for $\alpha<1$
$$\int_{x_0}^{+\infty}\vert M(y)\vert dy\leq \theta \frac{x_0^{\alpha-1}}{1-\alpha}\mbox{max}\vert T_0\vert.$$
Together, these two inequalities imply Lemma \ref{ineq:Mx0}.
\section{Estimate of the sequence $s^{>}_{j}$}
Recall that one constructs a sequence $s^{>}_{j}$ such that
$$s^{>}_{j+1}(x)=\frac{1}{2i}[\int_{x_0}^x M(y)s^{>}_{j}(y)dy+\int_x^{+\infty}M(y)s^{>}_{j}(y)e^{-2i\phi(y)+2i\phi(x)}dy]$$
hence satisfying
$$c\frac{d s^{>}_{j+1}}{dx}=\int_x^{+\infty}M(y)s^{>}_{j}(y)e^{-2i\phi(y)+2i\phi(x)}dy.$$
Recall that $\mathcal{L}$ is given by (\ref{mathcalL}).

The first tool is the following \begin{lemma}
\label{l:A1}
Assume that the function $m$ is regular enough at $+\infty$, meaning that $\mathcal{L}^k(m)$ is integrable and goes to 0 as $y\rightarrow +\infty$ for all $k\geq 0$.\\
One has the identity, for all $k$ and $j$
$$\begin{array}{ll}\int_{x}^{+\infty}m(y)s^{>}_{j}(y)e^{-2i\phi(y)}dy&=\frac{cs^{>}_{j}(x)}{2i}e^{-2i\phi(x)}[\sum_{p=0}^k\frac{\mathcal{L}^p(m)}{(2i)^p}]+\frac{1}{2i}\int_{x}^{+\infty}(\sum_{p=0}^k\frac{\mathcal{L}^p(m)}{(2i)^p})c\frac{ds^{>}_{j}}{dy}(y)e^{-2i\phi(y)}dy\cr
&+\frac{1}{(2i)^{k+1}}\int_x^{+\infty}\mathcal{L}^{k+1}(m)s^{>}_{j}(y)e^{-2i\phi(y)}dy.\end{array}$$
\end{lemma}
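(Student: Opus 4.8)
The plan is to prove the identity by a single ``master'' integration-by-parts step, iterated $k+1$ times, with an induction on $k$ carried out for each fixed $j$. The starting point is the elementary observation that, since $\phi'=c^{-1}$, one has $\frac{d}{dy}e^{-2i\phi(y)}=-2i\,c^{-1}(y)e^{-2i\phi(y)}$, equivalently $e^{-2i\phi(y)}=-\frac{c(y)}{2i}\frac{d}{dy}e^{-2i\phi(y)}$. Integrating by parts against any $C^1$ factor $h$ and discarding the boundary term at $+\infty$ yields the master formula
$$\int_x^{+\infty}h(y)e^{-2i\phi(y)}dy=\frac{c(x)h(x)}{2i}e^{-2i\phi(x)}+\frac{1}{2i}\int_x^{+\infty}\mathcal{L}(h)(y)e^{-2i\phi(y)}dy,$$
where $\mathcal{L}(h)=(ch)'$ as in (\ref{mathcalL}). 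The vanishing of the boundary term at $+\infty$ is exactly where the hypotheses enter: taking $h=\mathcal{L}^p(m)\,s^{>}_{j}$, the product $c\,h$ tends to $0$ because $\mathcal{L}^p(m)\to 0$, $c\leq 1$, and $s^{>}_{j}$ is uniformly bounded (Proposition \ref{teo:S0}); integrability of $\mathcal{L}^p(m)$ then guarantees that every integral appearing below converges absolutely.

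Next I would record the Leibniz rule for $\mathcal{L}$: for a product, $\mathcal{L}(fg)=(cfg)'=\mathcal{L}(f)\,g+c\,f\,g'$. Applying the master formula to $h=\mathcal{L}^p(m)\,s^{>}_{j}$ and invoking this rule with $f=\mathcal{L}^p(m)$ and $g=s^{>}_{j}$, I obtain, writing $J_p:=\int_x^{+\infty}\mathcal{L}^p(m)(y)\,s^{>}_{j}(y)\,e^{-2i\phi(y)}dy$ (so that the left-hand side of the lemma is $J_0$), the one-step recursion
$$J_p=\frac{c(x)\mathcal{L}^p(m)(x)\,s^{>}_{j}(x)}{2i}e^{-2i\phi(x)}+\frac{1}{2i}\int_x^{+\infty}\mathcal{L}^p(m)\,c\,\frac{d s^{>}_{j}}{dy}\,e^{-2i\phi}dy+\frac{1}{2i}J_{p+1}.$$
The crucial structural point is that the $\mathcal{L}$ produced at each step splits into a term carrying $\mathcal{L}^{p+1}(m)$, which feeds the next remainder $J_{p+1}$, and a term in which the derivative falls on $s^{>}_{j}$, which is precisely the contribution that accumulates into the middle integral of the claimed identity.

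Finally I would run the induction on $k$. The base case $k=0$ is the master formula applied to $h=m\,s^{>}_{j}$ after the splitting $\mathcal{L}(m\,s^{>}_{j})=\mathcal{L}(m)\,s^{>}_{j}+c\,m\,\frac{d s^{>}_{j}}{dy}$. For the inductive step I substitute the recursion for the remainder $\frac{1}{(2i)^{k+1}}J_{k+1}$ into the formula at level $k$: this appends the $p=k+1$ summand to both the boundary sum $\sum_{p=0}^{k}\frac{\mathcal{L}^p(m)}{(2i)^p}$ and the middle integral, and replaces $\frac{1}{(2i)^{k+1}}J_{k+1}$ by $\frac{1}{(2i)^{k+2}}J_{k+2}=\frac{1}{(2i)^{k+2}}\int_x^{+\infty}\mathcal{L}^{k+2}(m)\,s^{>}_{j}\,e^{-2i\phi}dy$, which is exactly the identity at level $k+1$. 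The main obstacle is not any single computation but the careful bookkeeping of the two $(2i)^{-p}$-weighted families as they accumulate, together with verifying at each iteration that the hypothesis ``$\mathcal{L}^p(m)$ integrable and vanishing at $+\infty$ for all $p$'' (plus the regularity of $m$ needed to define $\mathcal{L}^{k+1}(m)$, and the boundedness of $s^{>}_{j}$ and of $c\,\frac{d s^{>}_{j}}{dy}$) legitimately kills every boundary contribution and keeps all integrals absolutely convergent.
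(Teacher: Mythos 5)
Your proposal is correct and takes essentially the same route as the paper's own proof: the paper also integrates by parts via $\phi' c = 1$ (i.e.\ $e^{-2i\phi(y)}=-\frac{c(y)}{2i}\frac{d}{dy}e^{-2i\phi(y)}$), uses the same Leibniz splitting $\mathcal{L}(m\,s^{>}_{j})=\mathcal{L}(m)\,s^{>}_{j}+m\,c\,\frac{ds^{>}_{j}}{dy}$, writes out the cases $k=0$ and $k=1$, and then states ``one proceeds successively.'' Your master formula with the $J_p$ recursion and explicit induction on $k$ is just a cleaner formalization of that same iteration, with the boundary terms killed by exactly the hypotheses the paper invokes.
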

\begin{proof}
As $\phi'c=1$, one obtains
$$\int_x^{+\infty} m(y)s^{>}_{j}(y)e^{-2i\phi(y)}dy= -\frac{1}{2i}\int_x^{+\infty}cm(y)s^{>}_{j}(y) \frac{d}{dy}(e^{-2i\phi(y)})dy.$$
If $m$ and $\mathcal{L}(cm)$ belong to $L^1([x_0, +\infty[)$, and $m$ goes to 0 when $y\rightarrow +\infty$, one obtains:
$$\int_x^{+\infty} m(y)s^{>}_{j}(y)e^{-2i\phi(y)}dy=e^{-2i\phi(x)}\frac{cm(x)s^{>}_{j}(x)}{2i}+\frac{1}{2i}\int_x^{+\infty}\mathcal{L}(ms^{>}_{j})(y) e^{-2i\phi(y)}dy,$$
and using $\mathcal{L}(ms^{>}_{j})(y)=\mathcal{L}(m)s^{>}_{j}+m(y)c\frac{ds^{>}_{j}}{dy}(y)$, one obtains Lemma (\ref{l:A1}) for $k=0$.

Using 

$$\mathcal{L}(m)(y) s^{>}_{j}e^{-2i\phi(y)}dy=-\frac{1}{2i}\mathcal{L}(m)(y) cs^{>}_{j} (y)\frac{d}{dy}(e^{-2i\phi(y)})$$
one obtains
$$\int_x^{+\infty}\mathcal{L}(m)(y) s^{>}_{j}e^{-2i\phi(y)}dy=\frac{1}{2i}\mathcal{L}(m)(x) cs^{>}_{j}(x)e^{-2i\phi(x)}+\frac{1}{2i}\int_x^{+\infty}\mathcal{L}[\mathcal{L}(m) s^{>}_{j}] (y)e^{-2i\phi(y)}dy$$
and $\mathcal{L}[\mathcal{L}(m) s^{>}_{j}](y)= \mathcal{L}^2(m)s^{>}_{j}(y)+\mathcal{L}(m)(y)c\frac{ds^{>}_{j}}{dy}(y)$ yields Lemma (\ref{l:A1}) for $k=1$.

One proceeds successively.
\end{proof}
An easy consequence is the identity
\begin{equation}
c\frac{ds_{>, 1}}{dy}=\frac{c}{2i}[\sum_{p=0}^k\frac{\mathcal{L}^p(M)}{(2i)^p}]+\frac{1}{(2i)^{k+1}}e^{2i\phi(x)}\int_x^{+\infty}\mathcal{L}^{k+1}(M)e^{-2i\phi(y)}dy\label{eq:ds1}
\end{equation}
as well as the generalization, $j\geq 2$
\begin{equation}
\begin{array}{ll}c\frac{ds_{>, j}}{dy}&=\frac{cs_{>,j-1}}{2i}[\sum_{p=0}^k\frac{\mathcal{L}^p(M)}{(2i)^p}]+\frac{1}{2i}e^{2i\phi(x)}\int_{x}^{+\infty}(\sum_{p=0}^k\frac{\mathcal{L}^p(M)}{(2i)^p})c\frac{ds_{>,j-1}}{dy}(y)e^{-2i\phi(y)}dy\cr
&+\frac{1}{(2i)^{k+1}}e^{2i\phi(x)}\int_x^{+\infty}\mathcal{L}^{k+1}(M)e^{-2i\phi(y)}s_{>, j-1}(y)dy\end{array}
\label{eq:rec:si}
\end{equation}

\begin{lemma}
\label{l:A2}
i) There exist functions $T_p(X)$, uniformly bounded in $\theta$ for $X\in [0, +\infty[$ by $T_p^{\infty}$, such that 
\begin{equation}
\mathcal{L}^p(M)(x)=-\theta x^{\alpha-2-p}T_p(\theta x^\alpha)\label{symbolep}
\end{equation}
 In addition $T_{p+1}(0)=(\alpha-2-p)T_p(0), T_0(0)=\frac{\alpha (\alpha-1)}{4}$.\\
ii) If $\alpha-2-k<0$, one has the inequality
\begin{equation}
\vert\int_x^{+\infty}\mathcal{L}^{k+1}(M)e^{-2i\phi(y)}dy\vert\leq \theta \frac{x^{\alpha-2-k}}{k+2-\alpha}T_{k+1}^{\infty}
\end{equation}
iii) Consider the functions $T_k^S(x, X)=\sum_{p=0}^k x^{-p}\frac{T_p(X)}{(2i)^p}$. One has
$$\sum_{p=0}^k\frac{\mathcal{L}^p(M)}{(2i)^p}= -\theta x^{\alpha-2}T_k^S(x,\theta x^{\alpha})$$
with
\begin{equation}\label{estimee-somme-Lk}\vert T_k^S(x, \theta x^{\alpha})\vert \leq \sum_{p=0}^k \frac{T_p^{\infty}}{(2x_0)^p}.\end{equation}
iv) For $\alpha>2$, there exists $\theta_1(x_0)$ and a constant $C$ such that, for all $\theta<\theta_1(x_0)$ and $x\geq x_0$
\begin{equation}\label{eq:estimate-sum}
\vert \sum_{p=0}^k\frac{\mathcal{L}^p(M)}{(2i)^p}(x)\vert \leq C\theta^{\frac{2}{\alpha}}
\end{equation}
and there exists $x\in [x_0, +\infty)$ such that 
$$\vert \sum_{p=0}^k\frac{\mathcal{L}^p(M)}{(2i)^p}(x)\vert \theta^{-\frac{2}{\alpha}}\geq \frac12 C.$$
For $1<\alpha<2$, there exists $\theta_2$ such that, for $\theta<\theta_2$ and for all $x\geq x_0$
$$\vert \sum_{p=0}^k\frac{\mathcal{L}^p(M)}{(2i)^p}(x)\vert \leq \vert \sum_{p=0}^k\frac{\mathcal{L}^p(M)}{(2i)^p}(x_0)\vert\simeq C\theta$$
\end{lemma}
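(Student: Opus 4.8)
The plan is to prove the four items in sequence, with item (i) supplying the structural backbone on which the rest depend.

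First I would establish item (i) by induction on $p$. Writing $c(x)=g(\theta x^\alpha)$ with $g(X)=(1+X)^{-1/2}$, and assuming $\mathcal{L}^p(M)(x)=-\theta x^{\alpha-2-p}T_p(\theta x^\alpha)$, I compute $\mathcal{L}^{p+1}(M)=\frac{d}{dx}\bigl(c\,\mathcal{L}^p(M)\bigr)$. Since $c\,\mathcal{L}^p(M)=-\theta x^{\alpha-2-p}h_p(\theta x^\alpha)$ with $h_p(X):=(1+X)^{-1/2}T_p(X)$, differentiating and using $\frac{d}{dx}(\theta x^\alpha)=\alpha\theta x^{\alpha-1}$ produces the claimed form together with the recursion
$$T_{p+1}(X)=(\alpha-2-p)\,h_p(X)+\alpha X\,h_p'(X),\qquad h_p(X)=(1+X)^{-1/2}T_p(X).$$
Evaluating at $X=0$ gives $T_{p+1}(0)=(\alpha-2-p)T_p(0)$, and the base case $T_0(0)=\alpha(\alpha-1)/4$ is read off from the explicit $T_0$ computed in Appendix B. To obtain the uniform bound $T_p^\infty$, I would track the closed form $T_p(X)=(1+X)^{-(5+3p)/2}P_p(X)$ with $P_p$ a polynomial of degree at most $p+1$ (easily propagated through the recursion); since $\deg P_p-(5+3p)/2\le-(p+3)/2<0$, each $T_p$ is bounded on $[0,\infty)$ and in fact satisfies $|T_p(X)|\le C_p(1+X)^{-(p+3)/2}$. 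In the inhomogeneous case the same recursion holds with $T_p$ compactly supported, so boundedness is automatic.

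Items (ii) and (iii) are then immediate. For (iii) I substitute the formula of (i) into $\sum_{p=0}^k(2i)^{-p}\mathcal{L}^p(M)$, factor out $-\theta x^{\alpha-2}$ to identify $T_k^S(x,\theta x^\alpha)$, and bound termwise using $x\ge x_0$ and $|T_p|\le T_p^\infty$. For (ii), under the hypothesis $\alpha-2-k<0$ the triangle inequality gives $\bigl|\int_x^{+\infty}\mathcal{L}^{k+1}(M)e^{-2i\phi}\,dy\bigr|\le\theta\,T_{k+1}^\infty\int_x^{+\infty}y^{\alpha-3-k}\,dy$, and the power integral converges to $x^{\alpha-2-k}/(k+2-\alpha)$ precisely because of that hypothesis.

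The genuinely delicate item is (iv), where one must pin down the exact order in $\theta$, uniformly in $x\ge x_0$, and explain why it switches between $\theta^{2/\alpha}$ and $\theta$. Writing $\Sigma(x):=\sum_{p=0}^k(2i)^{-p}\mathcal{L}^p(M)(x)$, the key device is the substitution $X=\theta x^\alpha$, under which the leading ($p=0$) term becomes $-\theta^{2/\alpha}\,X^{(\alpha-2)/\alpha}T_0(X)$. For $\alpha>2$ the exponent $(\alpha-2)/\alpha$ is positive, so the profile $X^{(\alpha-2)/\alpha}T_0(X)$ vanishes at $X=0$ and decays at $X=\infty$ (since $(\alpha-2)/\alpha-3/2<0$), hence is bounded, yielding the uniform estimate $O(\theta^{2/\alpha})$; the terms $p\ge1$ pick up extra factors $\theta^{p/\alpha}$ and are subdominant, which I would confirm by the same rescaling while being careful that $X$ is bounded below only by $\theta x_0^\alpha$. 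The matching lower bound follows by choosing $x=(X_*/\theta)^{1/\alpha}\ge x_0$ (valid for small $\theta$), where $X_*$ maximizes the profile, so that $|\Sigma(x)|\ge\tfrac12C\theta^{2/\alpha}$. For $1<\alpha<2$, by contrast, $\alpha-2<0$ makes $x^{\alpha-2}$ decreasing, so the supremum is attained at the endpoint $x=x_0$, where $T_k^S(x_0,\theta x_0^\alpha)\to T_k^S(x_0,0)\ne0$ as $\theta\to0$, giving $\Sigma(x_0)\simeq-\theta x_0^{\alpha-2}T_k^S(x_0,0)$ of exact order $\theta$. I expect the main obstacle to be making this ``interior maximum versus endpoint maximum'' dichotomy rigorous and uniform in $x$, in particular controlling the subdominant terms near the lower limit $X=\theta x_0^\alpha$, where negative powers of $X$ could otherwise threaten the stated orders.
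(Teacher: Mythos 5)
Your proposal is correct and follows essentially the same route as the paper: induction on $p$ for item (i), termwise bounds for items (ii)--(iii), and the rescaling $z=\theta^{1/\alpha}x$ with an analysis of the bounded profile $z^{\alpha-2}T_0(z^{\alpha})$ (interior maximum for $\alpha>2$, endpoint domination for $1<\alpha<2$) for item (iv). The only harmless deviations are that your recursion correctly carries the factor $c=(1+X)^{-1/2}$ through $h_p=(1+X)^{-1/2}T_p$ --- the paper's displayed recursion $T_{p+1}=(\alpha-2-p)T_p+\alpha X T_p'$ drops this factor, although both give the same values $T_{p+1}(0)=(\alpha-2-p)T_p(0)$ --- and that your lower bound for $\alpha>2$ evaluates the sum directly at the maximizer of the leading profile rather than locating a nearby maximum via the implicit function theorem as the paper does.
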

Note that item iv) of this Lemma proves that $\sum_{p=0}^k\frac{\mathcal{L}^p(M)}{(2i)^p}(x)$ is optimally of order $\theta^{\frac{2}{\alpha}}$ for $\alpha>2$ and of order $\theta$ for $1<\alpha<2$ on $[x_0, +\infty)$.
\paragraph{Proof of Lemma \ref{l:A2}}
The item i)  is a consequence of the induction relation $T_{p+1}(X)=(\alpha-2-p)T_p(X)+\alpha X T'_p(X)$ and $T_0(X)=(\frac{\alpha(\alpha-1)}{4}-\frac54\alpha^2\frac{X}{1+X})(1+X)^{-\frac32}$. One gets then
$$\mathcal{L}^{p+1}(M)=-\theta x^{\alpha-2-p}T_{p+1}(\theta x^{\alpha})$$ 
with $T_{p+1}(0)=(\alpha-2-p)T_p(0)$. In addition, using $T_p(X)= (1+X)^{-p-\frac52}Q_p(X)$ where $Q_p$ is a polynomial of degree $p+1$ (which is also proven by the induction relation)

The proof of the second and of the third item comes from $\vert T_p(X)\vert\leq C_p(1+X)^{-\frac32}$, decreasing at $+\infty$ as well as all its derivatives, smooth for $X\in [0, +\infty[$ in the case $\lambda(x)=x$ and bounded uniformly as well as all its derivatives for $\lambda(x)=\int_0^x\chi(y)dy$.

The proof of item iv) comes from, for $\alpha>2$,
$$\sum_{p=0}^k\frac{\mathcal{L}^p(M)}{(2i)^p}(x)=\theta^{\frac{2}{\alpha}}\sum_{k=0}^p \theta^{\frac{p}{\alpha}}z^{\alpha-2-p}T_p(z^{\alpha}),\theta x^{\alpha}=z^{\alpha}.$$
One observes that $\vert z^{\alpha-2}T_0(z^{\alpha})\vert $ is maximum when $(\alpha-2) T_0(z^{\alpha})+\alpha z^{\alpha}T'_0(z^{\alpha})=0$. This equation have roots, the one leading to the point of maximum of $\vert z^{\alpha-2}T_0(z^{\alpha})\vert $ being denoted by $z_*$. The value of this maximum is $C_*=\vert z_*^{\alpha-2}T_0(z_*^{\alpha})\vert >0$. Using the implicit function theorem when $\theta\rightarrow 0$, there exists a point of maximum of $\sum_{k=0}^p \theta^{\frac{p}{\alpha}}z^{\alpha-2-p}T_p(z^{\alpha})$ close to $z_*$, and the maximum is close to $\vert z_*^{\alpha-2}T_0(z_*^{\alpha})\vert$. Hence, as the limit of $\vert z^{\alpha-2}T_0(z^{\alpha})\vert$ is zero when $z\rightarrow +\infty$, there exists $Z_1>z_*$ such that $\vert Z_1^{\alpha-2}T_0(Z_1^{\alpha})\vert =\frac{C_*}{2}$.

For $1<\alpha<2$, the limit of $z^{\alpha-2}T_0(z^{\alpha})$ when $z\rightarrow 0$ is $+\infty$, while its limit is 0 when $z\rightarrow +\infty$. Again, there is a unique point of minimum of $z^{\alpha-2}T_0(z^{\alpha})$, which is a point of maximum of $\vert z^{\alpha-2}T_0(z^{\alpha})\vert$. For $\theta$ small enough, the maximum value of $\vert \sum_{p=0}^k\frac{\mathcal{L}^p(M)}{(2i)^p}(x)\vert $ is obtained for $x=x_0$. In addition, as for $\alpha=2$, $T_0(1)=0$ and the limit of $T_0$ at $+\infty$ being zero, there exists a point of maximum of $\vert T_0(z^2)\vert$ in $[1, +\infty[$.

We may prove using Lemmas \ref{l:A1} and \ref{l:A2} that $s_{>,1}$ is no better than $\theta^{\frac{1}{\alpha}}$ for $\alpha>1$
\begin{lemma}
\label{l:A1bis}
There exists a constant $C_1$ such that, for $\alpha>2$
$$\vert 2is_{>,1}(x)-\int_{x_0}^x M(y)dy\vert \leq C_1\theta^{\frac{2}{\alpha}}, x\geq x_0$$
and for $1<\alpha\leq 2$
$$\vert 2is_{>,1}(x)-\int_{x_0}^x M(y)dy\vert \leq C_1\theta, x\geq x_0$$
\end{lemma}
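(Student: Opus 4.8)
The plan is to recognize that the quantity in question is, up to a unimodular factor, a single oscillatory tail integral. Using the recursion for the Volterra series with $s_{>,0}\equiv 1$, one has
$$2i\,s_{>,1}(x)=\int_{x_0}^{x}M(y)\,dy+\int_{x}^{+\infty}M(y)\,e^{2i(\phi(x)-\phi(y))}\,dy,$$
so that $2i\,s_{>,1}(x)-\int_{x_0}^{x}M(y)\,dy=e^{2i\phi(x)}\,g(x)$, where $g(x):=\int_{x}^{+\infty}M(y)\,e^{-2i\phi(y)}\,dy$. Since $|e^{2i\phi(x)}|=1$, it suffices to bound $|g(x)|$ by $C_1\theta^{2/\alpha}$ for $\alpha>2$ and by $C_1\theta$ for $1<\alpha\le 2$, uniformly in $x\ge x_0$.

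To estimate $g$, I would apply Lemma \ref{l:A1} with $j=0$ and $m=M$; the integrability hypotheses on $\mathcal{L}^p(M)$ are guaranteed by the decay estimates of Lemma \ref{l:A2}(i). Since $s_{>,0}$ is constant its derivative vanishes, so the middle term of the identity drops out and
$$g(x)=\frac{c(x)}{2i}\,e^{-2i\phi(x)}\sum_{p=0}^{k}\frac{\mathcal{L}^p(M)(x)}{(2i)^p}+\frac{1}{(2i)^{k+1}}\int_{x}^{+\infty}\mathcal{L}^{k+1}(M)(y)\,e^{-2i\phi(y)}\,dy.$$
I would then fix $k=[\alpha]$, so that $\alpha-2-k\le-1<0$. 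For the remainder integral, Lemma \ref{l:A2}(ii) gives a bound by $\theta\,x^{\alpha-2-k}T_{k+1}^{\infty}/(k+2-\alpha)$; because the exponent $\alpha-2-k$ is negative, $x^{\alpha-2-k}\le x_0^{\alpha-2-k}$ for $x\ge x_0$, so this term is $O(\theta)$ uniformly on $[x_0,+\infty)$. For the leading term I would use $0<c(x)\le 1$ together with Lemma \ref{l:A2}(iv), which bounds the partial sum $\sum_{p=0}^{k}(2i)^{-p}\mathcal{L}^p(M)(x)$ by $O(\theta^{2/\alpha})$ when $\alpha>2$ and by $O(\theta)$ when $1<\alpha<2$ (the case $\alpha=2$ being covered identically, since then $\theta^{2/\alpha}=\theta$).

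Combining the two contributions completes the argument: for $\alpha>2$ one has $2/\alpha<1$, so for $\theta\le 1$ the $O(\theta)$ remainder is absorbed into $O(\theta^{2/\alpha})$ and $|g(x)|\le C_1\theta^{2/\alpha}$; for $1<\alpha\le 2$ both contributions are $O(\theta)$ and $|g(x)|\le C_1\theta$. The whole proof is an assembly of Lemmas \ref{l:A1} and \ref{l:A2}, so the only points requiring care are choosing $k$ large enough that $\mathcal{L}^{k+1}(M)$ is integrable against the oscillation — making the remainder genuinely $O(\theta)$ uniformly in $x$ — and matching the powers $\theta^{2/\alpha}$ versus $\theta$ across the two regimes. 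The genuinely delicate input, namely that the partial sum has size exactly $\theta^{2/\alpha}$ rather than smaller, is already isolated in Lemma \ref{l:A2}(iv), so no new stationary-phase or oscillatory-integral estimate is needed here.
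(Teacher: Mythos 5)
Your proof is correct and takes essentially the same route as the paper's: reduce $2is_{>,1}(x)-\int_{x_0}^xM(y)\,dy$ to the unimodular factor times the tail integral $\int_x^{+\infty}M(y)e^{-2i\phi(y)}\,dy$, expand that integral via Lemma \ref{l:A1} with $j=0$, $m=M$ (the middle term vanishing because $s_{>,0}\equiv 1$), bound the remainder integral by Lemma \ref{l:A2}(i)--(ii), and bound the leading partial sum by Lemma \ref{l:A2}(iv), splitting the regimes $\alpha>2$ and $1<\alpha\leq 2$. The only (immaterial) difference is your choice $k=[\alpha]$ where the paper takes $k=[\alpha-1]$; both satisfy the integrability condition $\alpha-2-k<0$ required for the $O(\theta)$ remainder, so the argument is the same.
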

Note that Lemma \ref{l:A1bis} proves that also that the estimate $\vert\vert s_{>,1}\vert\vert_{\infty}=O(\theta^{\frac{1}{\alpha}}$ is optimal.
\paragraph{Proof of Lemma \ref{l:A1bis}}
Let $k=[\alpha-1]$. Consider $j=0$ in the identity of Lemma \ref{l:A1} and $m(y)=M(y)$. We check that
$$-\big[2is_{>,1}(x)-\int_{x_0}^xM(y)dy+\frac{c}{2i}[\sum_{p=0}^{k}\theta x^{\alpha-2-p}T_p(\theta x^\alpha)]\big]=\int_{x}^{+\infty}\theta y^{\alpha-3-k}T_{k+1}(y)e^{2i(\phi(x)-\phi(y)}dy.$$
As $\alpha-3-[\alpha-1]<-1$, one has, for $x\geq x_0$,
$$\vert \int_{x}^{+\infty}\theta y^{\alpha-3-k}T_{k+1}(y)e^{2i(\phi(x)-\phi(y)}dy\vert\leq \theta T_{[\alpha]}^{\infty}\frac{x^{\alpha-[\alpha]-1}}{[\alpha]+1-\alpha}\leq \theta T_{[\alpha]}^{\infty}\frac{x_0^{\alpha-[\alpha]-1}}{[\alpha]+1-\alpha}.$$
Hence there exists $C_0>0$ such that
$$\vert 2is_{>,1}(x)-\int_{x_0}^xM(y)dy+\frac{c}{2i}[\sum_{p=0}^{k}\theta x^{\alpha-2-p}T_p(\theta x^\alpha)]\vert\leq C_0\theta.$$
For $\alpha> 2$, one obtains, thanks to $\frac{2}{\alpha}<1$, that there exists $\theta_2$ such that for $\theta\leq \theta_2$, using estimate (\ref{eq:estimate-sum})
$$\vert 2is_{>,1}(x)-\int_{x_0}^xM(y)dy\vert\leq C_2\theta^{\frac{2}{\alpha}}.$$
For $1<\alpha\leq 2$, one obtains, for $\theta\leq \theta_2$
$$\vert 2is_{>,1}(x)-\int_{x_0}^xM(y)dy\vert\leq C_2\theta.$$
Remark finally that
$\int_{x_0}^x M(y)dy=\theta^{\frac{1}{\alpha}}\int_{x_0\theta^{\frac{1}{\alpha}}}^{x\theta^{\frac{1}{\alpha}}}z^{\alpha-2}T_0(z^{\alpha})dz$, hence $\int_{x_0}^xM(y)dy$ is of order $\theta^{\frac{1}{\alpha}}$.
\begin{lemma}
For each given $m_0$,  for all $j$,  there exists $j$ functions $A_j^l$ (depending on $m_0$) and $B^l_j$ such that $A^l_j(x,\theta)=(\theta x^{\alpha-2})^{j-l}B^l_j(x^{-1}, \theta x^{\alpha})$ such that the function $r_j$ given by
\begin{equation}\label{eq:dsj}r_j(x)=c\frac{ds_{>, j}}{dx}-\sum_{l=0}^{j-1}A^l_j(x, \theta)s_{>,l}(x)
\end{equation}
satisfies, for all $x\geq x_0$, $$\vert r_j(x)\vert\leq \theta C_{m_0}^jjx^{\alpha-m_0}.$$
for a constant $C_{m_0}^j$
\label{l:A3}
\end{lemma}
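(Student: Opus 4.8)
The plan is to prove the statement by induction on $j$, driven by the two differentiated recursions (\ref{eq:ds1}) and (\ref{eq:rec:si}) for $c\,\frac{ds_{>,j}}{dx}$, the integration-by-parts identity of Lemma \ref{l:A1}, and the symbol calculus of Lemma \ref{l:A2}. The base case $j=1$ is read directly off (\ref{eq:ds1}): the first term $\frac{c}{2i}\sum_{p=0}^{k}\frac{\mathcal{L}^p(M)}{(2i)^p}$ is, by (\ref{symbolep}), exactly of the form $(\theta x^{\alpha-2})^{1}B_1^0(x^{-1},\theta x^\alpha)$ with $B_1^0$ bounded, hence is the single symbolic term $A_1^0 s_{>,0}$ (recall $s_{>,0}=1$); the leftover $\frac{1}{(2i)^{k+1}}e^{2i\phi}\int_x^{+\infty}\mathcal{L}^{k+1}(M)e^{-2i\phi}\,dy$ is estimated by part (ii) of Lemma \ref{l:A2}, giving $O(\theta x^{\alpha-2-k})$ once $\alpha-2-k<0$. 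Choosing the integration-by-parts depth $k=m_0-2$ makes this exactly $O(\theta x^{\alpha-m_0})$, which is $r_1$.

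For the inductive step I would feed the hypothesis for $j-1$ into (\ref{eq:rec:si}). Its first term $\frac{c\,s_{>,j-1}}{2i}\sum_{p}\frac{\mathcal{L}^p(M)}{(2i)^p}$ is already $A_j^{j-1}s_{>,j-1}$ with $A_j^{j-1}\in(\theta x^{\alpha-2})^{1}\cdot\{\text{bounded}\}$. In the middle integral I substitute the inductive decomposition $c\,\frac{ds_{>,j-1}}{dy}=\sum_{l=0}^{j-2}A_{j-1}^l s_{>,l}+r_{j-1}$. For each $l\le j-2$ the amplitude $\big(\sum_p \mathcal{L}^p(M)/(2i)^p\big)A_{j-1}^l$ has, by the additivity of orders under products, order $(\theta x^{\alpha-2})^{1}\cdot(\theta x^{\alpha-2})^{j-1-l}=(\theta x^{\alpha-2})^{j-l}$ times a bounded function of $(x^{-1},\theta x^\alpha)$; applying Lemma \ref{l:A1} once more to $\int_x^{+\infty}(\text{amplitude})\,s_{>,l}e^{-2i\phi}\,dy$ peels off a non-oscillatory boundary term, which is precisely the claimed $A_j^l s_{>,l}$. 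The key structural point is that, by part (i) of Lemma \ref{l:A2}, $\mathcal{L}$ lowers the power of $x$ by one but preserves the power of $\theta$ and the bounded-symbol structure, so the boundary coefficient stays in $(\theta x^{\alpha-2})^{j-l}\cdot\{\text{bounded}\}$, giving $A_j^l$ the exact required form.

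Everything else is collected into $r_j$: the $\mathcal{L}^{k+1}(M)$ remainder of the original step, the higher oscillatory integrals produced by the second application of Lemma \ref{l:A1}, the middle integrals involving $c\,\frac{ds_{>,l}}{dy}$ for $l\le j-2$, and the contribution of $r_{j-1}$ transported through the middle integral. Each of these is an oscillatory integral whose amplitude, after the $\mathcal{L}$-reductions, carries an $x$-power that can be made as negative as desired by enlarging $k$; using part (ii) of Lemma \ref{l:A2}, the uniform symbol bounds $|T_p|\le T_p^\infty$, and the estimate $|s_{>,l}|\le(M_{x_0}/2)^l\le1$ from Propositions \ref{teo:Vseries} and \ref{teo:S0}, one bounds each piece by $\theta\,C\,x^{\alpha-m_0}$ provided $k=k(j,m_0)$ is large enough (it must grow with both $j$ and $m_0$, since the symbol order $(\theta x^{\alpha-2})^{j-l}$ supplies up to $j$ extra powers of $x$ when $\alpha>2$). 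Summing the $O(j)$ contributions and absorbing all $k$- and $j$-dependent factors (the constants $T_p^\infty$, the weights $1/(k+2-\alpha)$, and the geometric bounds on the iterates) into one constant yields the stated bound $\theta\,C_{m_0}^j\,j\,x^{\alpha-m_0}$.

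The main obstacle is precisely this double bookkeeping: one must simultaneously verify that the symbolic coefficients keep the exact form $(\theta x^{\alpha-2})^{j-l}B_j^l(x^{-1},\theta x^\alpha)$ through repeated applications of $\mathcal{L}$ and through products, while the remainder retains a single clean power $x^{\alpha-m_0}$ with a constant growing only geometrically in $j$. The delicate points are (a) that the factors $(\theta x^{\alpha-2})^{q}$ are themselves unbounded on $[x_0,+\infty)$ when $\alpha>2$, so the estimates must be carried out at the level of $x$-powers, retaining the uniform-in-$\theta$ symbol bounds of Lemma \ref{l:A2} (which rest on the decay $T_p(X)=O((1+X)^{-3/2})$) rather than crudely bounding these factors by a supremum, and (b) that the integration-by-parts depth $k$ must be coupled to both $m_0$ and $j$ so that every residual oscillatory integral is pushed below the target power $x^{\alpha-m_0}$ uniformly for $x\ge x_0$.
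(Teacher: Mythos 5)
Your induction skeleton---base case read off (\ref{eq:ds1}), inductive step driven by (\ref{eq:rec:si}), boundary terms of Lemma \ref{l:A1} feeding the coefficients $A_j^l$, symbol bookkeeping from Lemma \ref{l:A2}---is the same as the paper's, and your base case, the identification of $A_j^{j-1}$, and the first generation of boundary terms are handled correctly. The genuine gap is where you collect ``the middle integrals involving $c\frac{ds_{>,l}}{dy}$ for $l\le j-2$'' into $r_j$, justified by the claim that their amplitude ``carries an $x$-power that can be made as negative as desired by enlarging $k$.'' That claim is false for precisely these terms: in the identity of Lemma \ref{l:A1} the middle integral has amplitude $\sum_{p=0}^{k}\mathcal{L}^p(m)/(2i)^p$, which always contains the $p=0$ summand $m$ itself, so its size is independent of $k$; enlarging $k$ improves only the $\mathcal{L}^{k+1}(m)$ remainder. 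Quantitatively, once the induction hypothesis is inserted for $c\frac{ds_{>,l}}{dy}$, these integrals contain pieces of symbol order $(\theta y^{\alpha-2})^{q}$ with $q=j-l'\geq 3$ fixed; using the decay $\vert T_p(X)\vert\leq C(1+X)^{-3/2}$ of Lemma \ref{l:A2} and retaining one power of $\theta$, the best pointwise bound is $C\theta y^{\alpha-2q}$, hence these integrals are only $O(\theta x^{\alpha-2q+1})$. This fixed power never reaches $x^{\alpha-m_0}$ once $m_0>2q-1$, and the lemma is genuinely needed for arbitrary $m_0$: Proposition \ref{p:all-terms} (and through it Theorem \ref{th:refl:coeff}) uses it with $m_0>2\alpha-1$, so that $y^{\alpha-2}\cdot y^{\alpha-m_0}$ is integrable at $+\infty$. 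As written, your $r_j$ therefore violates the claimed bound for large $m_0$; your argument establishes the lemma only for $m_0$ below a fixed threshold.

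The paper closes exactly this hole by a finite recursive descent instead of a one-shot estimate. In each second-generation middle integral one substitutes again $c\frac{ds_{>,l}}{dy}=\sum_{l'\leq l-1}A_l^{l'}s_{>,l'}+r_l$ and applies Lemma \ref{l:A1} once more; the resulting boundary terms are \emph{not} remainders but genuine symbol contributions of order $(\theta x^{\alpha-2})^{j-l'}$ which must be added to the coefficients $A_j^{l'}$ (including the pure-symbol integrals with $l'=0$, which after integration by parts feed $A_j^{0}$---this is how the term $(\theta y^{\alpha})^3B_{3,0}(y^{-1},\theta y^{\alpha})$ arises in the paper's explicit $j=3$ computation). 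Because the successive indices $j>l>l'>l''>\cdots\geq 0$ strictly decrease, the process terminates: at the bottom $s_{>,0}=1$, so $c\frac{ds_{>,0}}{dy}=0$ and the middle integral disappears. After this descent, what remains really is remainder---terms carrying a factor $\mathcal{L}^{k+1}$ of something, and terms carrying some $r_l$ (the latter are harmless, as the extra symbol factor is $O(y^{-2})$ uniformly in $\theta$)---and only for those does choosing $k=k(j,m_0)$ large yield $O(\theta x^{\alpha-m_0})$. Inserting this descent repairs your proof; the rest of your bookkeeping (additivity of symbol orders under products, stability of the class $(\theta x^{\alpha-2})^{j-l}B(x^{-1},\theta x^{\alpha})$ under $\mathcal{L}$) is sound and agrees with the paper.
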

The proof is done by induction, and we begin by the two first steps explicitely to show the way of obtaining $A_j^l$ and $r_j$.

We observe first $c\frac{ds_{>,1}}{dy}=\theta A_1^0(x)+r_1(x)$, with $A_1^0(x)=\frac{1}{2i}x^{\alpha-2}T_k^S(x,\theta x^{\alpha})c(\theta x^{\alpha})$ and
$$r_1(x)=\frac{1}{(2i)^{k+1}}e^{2i\phi(x)}\int_x^{+\infty} \mathcal{L}^{k+1}(M)e^{-2i\phi(y)}dy.$$

From the identity of Lemma \ref{l:A1} written for $k=k_2$ and the equality (\ref{eq:ds1}) written for $k=k_1$ one obtains
\begin{equation}
\label{eq:ds2}\begin{array}{ll}c\frac{ds_{>,2}}{dy}=&\frac{cs_{>,1}}{2i}[\sum_{p=0}^{k_2}\frac{\mathcal{L}^p(M)}{(2i)^p}]+\frac{1}{2i}e^{2i\phi(x)}\int_x^{+\infty}(\sum_{p=0}^{k_2}\frac{\mathcal{L}^p(M)}{(2i)^p})\frac{c}{2i}[\sum_{p=0}^{k_1}\frac{\mathcal{L}^p(M)}{(2i)^p}]e^{-2i\phi(y)}dy\cr
&+\frac{1}{(2i)^{k_1+k_2+2}}e^{2i\phi(x)}\int_x^{+\infty}(\sum_{p=0}^{k_2}\frac{\mathcal{L}^p(M)}{(2i)^p})\int_y^{+\infty}\mathcal{L}^{k_1+1}(M)(z)e^{-2i\phi(z)}dzdy\cr
&+\frac{1}{(2i)^{k_2+1}}e^{2i\phi(x)}\int_x^{+\infty}\mathcal{L}^{k_2+1}(M)e^{-2i\phi(y)}s_{>, 1}(y)dy\end{array}\end{equation}
This rewrites
$$c\frac{ds_{>,2}}{dy}=\theta A_2^1(y)s_{>,1}(y)+\theta^2A_2^0(y)+r_2(y)$$
where $A_2^1=A_1^0$ (for the same choice of $k$, number of terms of the expansion) and $A_2^0(y)=c(\theta y^{\alpha})\sum_{l=0}^{k}\frac{\mathcal{L}^l(m)}{(2i)^l}$, $m(y)=\frac{c}{2i}y^{2\alpha-4}T_{k_1}^S(x, \theta x^{\alpha})T_{k_2}^S(x^{-1}, \theta x^{\alpha})$. One then obtains
$$A_2^0(y)=(\theta y^{\alpha-2})^2B_{2,0}(y^{-1}, \theta y^{\alpha})$$
Using (\ref{eq:rec:si}) for $i=3$, one has

$$
\begin{array}{ll}c\frac{ds_{>, 3}}{dy}&=\frac{cs_{>,2}}{2i}[\sum_{p=0}^{k_3}\frac{\mathcal{L}^p(M)}{(2i)^p}]+\frac{1}{2i}e^{2i\phi(x)}\int_{x}^{+\infty}(\sum_{p=0}^{k_3}\frac{\mathcal{L}^p(M)}{(2i)^p})c\frac{ds_{>,2}}{dy}(y)e^{-2i\phi(y)}dy\cr
&+\frac{1}{(2i)^{k_3+1}}e^{2i\phi(x)}\int_x^{+\infty}\mathcal{L}^{k_3+1}(M)e^{-2i\phi(y)}s_{>, 2}(y)dy\end{array}
$$

Using (\ref{eq:ds2}) in this equality, one obtains

$$
\begin{array}{ll}c\frac{ds_{>, 3}}{dy}&=\frac{cs_{>,2}}{2i}[\sum_{p=0}^{k_3}\frac{\mathcal{L}^p(M)}{(2i)^p}]+\frac{1}{2i}e^{2i\phi(x)}\int_{x}^{+\infty}(\sum_{p=0}^{k_3}\frac{\mathcal{L}^p(M)}{(2i)^p})[\sum_{p=0}^{k_2}\frac{\mathcal{L}^p(M)}{(2i)^p}](y)\frac{cs_{>,1}}{2i}e^{-2i\phi(y)}dy\cr
&+\frac{1}{(2i)^2}e^{2i\phi(x)}\int_{x}^{+\infty}(\sum_{p=0}^{k_3}\frac{\mathcal{L}^p(M)}{(2i)^p})\int_y^{+\infty}(\sum_{p=0}^{k_2}\frac{\mathcal{L}^p(M)}{(2i)^p})\frac{c}{2i}[\sum_{p=0}^{k_1}\frac{\mathcal{L}^p(M)}{(2i)^p}]e^{-2i\phi(z)}dzdy\cr
&+\frac{1}{(2i)^{k_2+2}}e^{2i\phi(x)}(\sum_{p=0}^{k_3}\frac{\mathcal{L}^p(M)}{(2i)^p})\int_y^{+\infty}\int_x^{+\infty}\mathcal{L}^{k_2+1}(M)e^{-2i\phi(z)}s_{>, 1}(z)dzdy\cr
&+\frac{1}{(2i)^{k_1+k_2+4}}e^{2i\phi(x)}\int_x^{+\infty}(\sum_{p=0}^{k_3}\frac{\mathcal{L}^p(M)}{(2i)^p})\int_y^{+\infty}(\sum_{p=0}^{k_2}\frac{\mathcal{L}^p(M)}{(2i)^p})(y)\int_z^{+\infty}\mathcal{L}^{k_1+1}(M)(t)e^{-2i\phi(t)}dtdzdy\cr&+\frac{1}{(2i)^{k_3+1}}e^{2i\phi(x)}\int_x^{+\infty}\mathcal{L}^{k_3+1}(M)e^{-2i\phi(y)}s_{>, 2}(y)dy\end{array}
$$
We have to analyse all terms of this equality. The three last terms are uniformly bounded as soon as $k_1,k_3$ are large enough, owing to the regularizing effect of $\mathcal {L}$ characterized through Lemma \ref{l:A2}. Let us consider, for example, the term 
$$\frac{1}{(2i)^{k_1+k_2+4}}e^{2i\phi(x)}\int_x^{+\infty}(\sum_{p=0}^{k_3}\frac{\mathcal{L}^p(M)}{(2i)^p})\int_y^{+\infty}(\sum_{p=0}^{k_2}\frac{\mathcal{L}^p(M)}{(2i)^p})(y)\int_z^{+\infty}\mathcal{L}^{k_1+1}(M)(t)e^{-2i\phi(t)}dtdzdy$$
The estimate for this term uses (\ref{estimee-somme-Lk}) as follows:
$$\vert \int_z^{+\infty}\mathcal{L}^{k_1+1}(M)(t)e^{-2i\phi(t)}dt\vert\leq \theta T_{k_1+1}^{\infty}\frac{z^{\alpha-2-k_1}}{2+k_1-\alpha}, k_1>\alpha-2$$
from which one deduces
$$\begin{array}{ll}\vert \int_y^{+\infty}(\sum_{p=0}^{k_2}\frac{\mathcal{L}^p(M)}{(2i)^p})(y)\int_z^{+\infty}\mathcal{L}^{k_1+1}(M)(t)e^{-2i\phi(t)}dtdz\vert&\leq \theta T_{k_2+1}^{\infty}\theta T_{k_1+1}^{\infty}\frac{z^{2\alpha-4-k_1-1}}{3+k_1-\alpha}dz\cr
&=\theta^2T_{k_2+1}^{\infty}T_{k_1+1}^{\infty}\frac{y^{2\alpha-3-k_1}}{(3+k_1-2\alpha)(2+k_1-\alpha)}, k_1>2\alpha-3\end{array}$$
hence a final estimate of this remainder term by $C\theta^2x^{3\alpha-4-k_1}$ for $k_1>3\alpha-4$. When $\alpha>1$, the condition which contains all is $k_1>3\alpha-4$.\\
 The third term is an integral which do not contain any term of the form $s^{>}_{j}$, hence can be easily treated by integration by parts and leads to a term of the form $$(\theta y^{\alpha})^3 B_{3,0}(y^{-1}, \theta y^{\alpha}).$$
 The first term is the finite sum $A_2^1(y)$. The only term left to consider is the second term, on which we use Lemma \ref{l:A1}. The coefficient of $s_{>,1}$ in this integral is of the form $(\theta y^{\alpha-2})^2B_{2,0}(y^{-1}, \theta y^{\alpha})$, hence the action of $\sum_p\frac{\mathcal{L}^p}{(2i)^p}$ returns a similar term, which writes then
 $$(\theta y^{\alpha-2})^2B_{3,2}(y^{-1}, \theta y^{\alpha}).$$
 This allows us to write the existence of three functions $A^2_3, A^1_3$  and $A_3^0$, and a remainder term $r$, such that
$$c\frac{ds_{>, 3}}{dy}=A^2_3(x)s_{>,2}(x)+A^1_3(x)s_{>,1}(x)+A^0_3(x)+r_3(x)$$
where $\vert r_3(x)\vert \leq C^3_M\theta x^{\alpha-M}$ where $M$ is large enough.  \\
For the general proof by induction, let us assume that the Lemma is true for all $j'\leq j$. One uses Lemma \ref{l:A1} to obtain
$$\begin{array}{ll}c\frac{ds^{>}_{j+1}}{dy}&=\frac{cs^{>}_{j}}{2i}[\sum_0^k\frac{(\mathcal{L})^p(M)}{(2i)^p}]+\frac{1}{2i}e^{2i\phi(x)}\int_x^{+\infty}(\sum_0^k\frac{(\mathcal{L})^p(M)}{(2i)^p})c\frac{ds^{>}_{j}}{dy}e^{-2i\phi(y)}dy\cr
&+\frac{1}{(2i)^{k+1}}\int_x^{+\infty}\mathcal{L}^{k+1}(M)s^{>}_{j}(y)e^{-2i\phi(y)}dy.\end{array}$$
We plug the induction hypothesis in the equality to obtain
$$\begin{array}{ll}c\frac{ds^{>}_{j+1}}{dy}&=\frac{cs^{>}_{j}}{2i}[\sum_0^k\frac{(\mathcal{L})^p(M)}{(2i)^p}]+\frac{1}{2i}e^{2i\phi(x)}\int_x^{+\infty}(\sum_0^k\frac{(\mathcal{L})^p(M)}{(2i)^p})[\sum_{l=0}^{j-1}A_j^l(y)s_{>,l}(y)]e^{-2i\phi(y)}dy\cr
&+\frac{1}{2i}e^{2i\phi(x)}\int_x^{+\infty}(\sum_0^k\frac{(\mathcal{L})^p(M)}{(2i)^p})r_j(y)e^{-2i\phi(y)}dy+\frac{1}{(2i)^{k+1}}\int_x^{+\infty}\mathcal{L}^{k+1}(M)s^{>}_{j}(y)e^{-2i\phi(y)}dy.\end{array}$$
Hence the coefficient of $s^{>}_{j}$ comes from the first term only and is equal to $\frac{c}{2i}\sum_0^k\frac{(\mathcal{L})^p(M)}{(2i)^p}$. We denote it by $$A_{j+1}^j=\frac{1}{2i}x^{\alpha-2}c(\theta x^{\alpha})M_k^S(x, \theta x^{\alpha})$$

The identity of Lemma \ref{l:A1} yields, with the notation $m_j^l(x)=(\sum_0^k\frac{\mathcal{L}^p(M)}{(2i)^p})A_j^l(x)$ and $l\leq j-1$

$$\begin{array}{ll}\int_x^{+\infty}m_j^l(y)s_{>,l}(y)e^{-2i\phi(y)}dy&=\frac{cs_{>,l}(x)}{2i}e^{-2i\phi(x)}[\sum_{p=0}^k\frac{\mathcal{L}^p(m_j^l)}{(2i)^p}]+\frac{1}{2i}\int_{x}^{+\infty}(\sum_{p=0}^k\frac{\mathcal{L}^p(m_j^l)}{(2i)^p})c\frac{ds_{>,l}}{dy}(y)e^{-2i\phi(y)}dy\cr
&+\frac{1}{(2i)^{k+1}}\int_x^{+\infty}\mathcal{L}^{k+1}(m_j^l)s^{>}_{j}(y)e^{-2i\phi(y)}dy.\end{array}$$
We use the induction hypothesis for all terms of the equality above containing $c\frac{ds_{>,l}}{dy}$. We are thus left to evaluate, for $l\geq 1$
$$\frac{1}{2i}\int_{x}^{+\infty}(\sum_{p=0}^k\frac{\mathcal{L}^p(m_j^l)}{(2i)^p})c\frac{ds_{>,l}}{dy}(y)e^{-2i\phi(y)}dy=\frac{1}{2i}\int_{x}^{+\infty}(\sum_{p=0}^k\frac{\mathcal{L}^p(m_j^l)}{(2i)^p})[r_l(y)+\sum_{l'\leq l-1}A_l^{l'}(y)s_{>,l'}(y)]e^{-2i\phi(y)}dy.$$
On each individual term $\frac{1}{2i}\int_{x}^{+\infty}(\sum_{p=0}^k\frac{\mathcal{L}^p(m_j^l)}{(2i)^p})A_l^{l'}(y)s_{>,l'}(y)e^{-2i\phi(y)}dy$, one uses Lemma \ref{l:A1} which reduces the evaluation of this sum to the evaluation of $c\frac{ds_{>,l'}}{dy}$ for $l'\geq 1$ (because for $l'=0$, this term is equal to 0. This evaluation, thanks to identity (\ref{eq:rec:si}), involves only terms of the form $A_{l'}^{l''}s_{>,l''}$ with $l''\leq l'-1\leq l-2\leq j-3$. For each of these terms, one uses Lemma \ref{l:A1}. As the index $j,l,l',l''$ form a strictly decreasing sequence of integers, one is left with a finite iteration process. All terms involved are functions thanks to the application of Lemma \ref{l:A1} in the particular case $j=0$, because $s_{>,0}=1$.\\
The remainder terms in these successive equalities are either terms of the form
$$\int_x^{+\infty}\mathcal{L}^{k+1}(m_l^{l'})(y)s^{>}_{j}e^{-2i\phi(y)}dy$$
or
$$\int_x^{+\infty}r_l(y)m(y)e^{-2i\phi(y)}dy.$$
We use Lemma \ref{l:A2} to have explicit expressions for
$$\mathcal{L}^p(M), \sum_{p=0}^k\frac{\mathcal{L}^p(M)}{(2i)^p}, \sum_{p=0}^k\frac{\mathcal{L}^p(m_j^l)}{(2i)^p},$$
All these terms write, respectively $$-\theta x^{\alpha-2-p}T_p(\theta x^{\alpha}), -\theta x^{\alpha-2}T^S_k(\theta x^{\alpha}), -\theta x^{\beta-2}T_{j,l,k}(\theta, \theta x^\alpha)$$
where $\beta$ depends on $\alpha$ and $j,l,k$ and the expression of $T^S_k$ and of $T_{j,l,k}$ is complicated and not needed. From these explicit expressions, we obtain similar expressions for $\mathcal{L}^{k+1}(m_l^{l'})$, the factor being $\theta x^{\alpha-2-k-1}$. Using the explicit integration by parts with $\mathcal{L}$ and the bound of $T_p,T^S_k, T_{j,l,k}$ one deduces similar relations for the remainder terms.\\
The relation on $c\frac{ds_{>, j+1}}{dy}$ is thus obtained, which ends the proof of Lemma \ref{l:A3}. It is also a (difficult) consequence of this proof that
$$A_j^l(x)=(\theta x^{\alpha-2})^{j-l} B_{j,l}(x^{-1}, \theta x^{\alpha}).$$
Along with $\vert s^{>}_{j}(x)\vert\leq C_j\theta^{\frac{j}{\alpha}}$ for $x\in [x_0, +\infty[$, this helps to evaluate all the terms of the expansion of $c\frac{ds^{>}_{j}}{dx}$.\\
Note in particular that, for $x\in [x_0, +\infty[$ and $x_0>0$ fixed, all terms $B_{j,l}(x^{-1}, \theta x^{\alpha})$ are uniformly bounded for $\theta<\theta_0(x_0)$ given.\\
This estimate is crucial to allow us to use the dominated convergence theorem as follows, for $\alpha-2-p<-1$
$$\mbox{lim}_{\theta\rightarrow 0_+} \int_{x_0}^{+\infty}x^{\alpha-2-p}M_p(\theta x^{\alpha})e^{-2i\phi(x)}dx= \int_{x_0}^{+\infty}x^{\alpha-2-p}M_p(0)e^{-2ix}dx.$$
The condition $\alpha-2-p<-1$ is ensured by using the identity of Lemma \ref{l:A1} for $x=x_0$, $k$ large enough, $j=0$ for treating the term $\int_{x_0}^{+\infty}\mathcal{L}^{k+1}(m)e^{-2i\phi(y)}dy$. For example, one is left with
$$\begin{array}{ll}\mbox{lim}_{\theta\rightarrow 0_+}\int_{x_0}^{+\infty}\theta^{-1}M(y)e^{-2i\phi(y)}dy=& \frac{1}{2i}[\sum_{p=0}^k\frac{1}{(2i)^p}\frac{d^p}{dy^p}(-\frac{\alpha(\alpha-1)}{4}y^{\alpha-2})(x_0)]\cr
&+\frac{1}{(2i)^{k+1}}\int_{x_0}^{+\infty}\frac{d^{k+1}}{dy^{k+1}}(-\frac{\alpha(\alpha-1)}{4}y^{\alpha-2})e^{-2iy}dy.\end{array}$$
We thus deduce
\label{p:all-terms}
\begin{proposition}
For all $k$ large enough, there exists a function $r_j^k$ such that one has
$$\int_{x_0}^{+\infty}\theta^{-1}M(y)s^{>}_{j}(y)e^{-2i\phi(y)}dy=\frac{cs^{>}_{j}}{2i} x_0^{\alpha-2}T_k^S(x_0^{-1}, \theta x_0^\alpha)+\theta^{\frac{j}{\alpha}}r_j^k(x_0; \theta)+O(\theta).$$
One deduces, for $k$ large enough
$$\int_{x_0}^{+\infty}\theta^{-1}M(y)e^{-2i\phi(y)}(\sum_{j=0}^{n_0}s^{>}_{j})(y)dy=\frac{1}{2i} x_0^{\alpha-2}T_k^S(x_0^{-1}, \theta x_0^\alpha) +r_{n_0, k}(x_0,0)+O(\theta^{\frac{1}{\alpha}}).$$
\end{proposition}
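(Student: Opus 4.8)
The plan is to start from the integration-by-parts identity of Lemma \ref{l:A1}, applied with $m=\theta^{-1}M$, base point $x=x_0$, and $k$ taken large. This writes $\int_{x_0}^{+\infty}\theta^{-1}M(y)s^{>}_j(y)e^{-2i\phi(y)}dy$ as the sum of a boundary term, a middle integral carrying $c\,\frac{ds^{>}_j}{dy}$, and a tail integral involving $\mathcal{L}^{k+1}(\theta^{-1}M)$. The boundary term $\frac{cs^{>}_j(x_0)}{2i}e^{-2i\phi(x_0)}\sum_{p=0}^k\frac{\mathcal{L}^p(\theta^{-1}M)}{(2i)^p}$ is, by part (iii) of Lemma \ref{l:A2}, exactly the advertised leading term $\frac{cs^{>}_j}{2i}x_0^{\alpha-2}T_k^S(x_0^{-1},\theta x_0^\alpha)$, the phase and the factor $c(x_0)=1+O(\theta)$ being harmless. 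For the tail, parts (i) and (ii) of Lemma \ref{l:A2} give $\mathcal{L}^{k+1}(\theta^{-1}M)(y)=-y^{\alpha-3-k}T_{k+1}(\theta y^\alpha)$ with $T_{k+1}$ uniformly bounded; choosing $k$ with $\alpha-3-k<-1$ makes $y^{\alpha-3-k}$ integrable on $[x_0,\infty)$, so this integral is bounded by $\|s^{>}_j\|_\infty$ times a $\theta$-independent constant, hence of size $O(\theta^{j/\alpha})$, and is absorbed into the $\theta^{j/\alpha}r_j^k(x_0;\theta)$ term.

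The crux is the middle integral $\frac{1}{2i}\int_{x_0}^{+\infty}\left(\sum_{p=0}^k\frac{\mathcal{L}^p(\theta^{-1}M)}{(2i)^p}\right)c\,\frac{ds^{>}_j}{dy}\,e^{-2i\phi}dy$. Here I would substitute the decomposition of Lemma \ref{l:A3}, $c\,\frac{ds^{>}_j}{dy}=\sum_{l=0}^{j-1}A_j^l s^{>}_l+r_j$, exploiting the structural form $A_j^l(x,\theta)=(\theta x^{\alpha-2})^{j-l}B_{j,l}(x^{-1},\theta x^\alpha)$ and the bound $|r_j(x)|\le\theta\,C^j_{m_0}j\,x^{\alpha-m_0}$. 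Each summand $\int(\cdots)A_j^l s^{>}_l e^{-2i\phi}$ is treated by a fresh application of Lemma \ref{l:A1}, which peels off a boundary contribution and leaves an integral again involving $c\,\frac{ds^{>}_l}{dy}$ for $l<j$; since the indices strictly decrease and terminate at $s^{>}_0\equiv1$ (where the derivative term vanishes), this descent is finite. Counting powers of $\theta$ via the $A_j^l$ factor together with $\|s^{>}_l\|_\infty\le C_l\theta^{l/\alpha}$ gives size $\theta^{(j-l)+l/\alpha}\le\theta^{j/\alpha}$ (since $(j-l)(1-\tfrac{1}{\alpha})\ge0$ for $\alpha\ge1$), so every such contribution joins $\theta^{j/\alpha}r_j^k(x_0;\theta)$. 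The $r_j$-driven integral, paired against the order-one sum $\sum_{p=0}^k\frac{\mathcal{L}^p(\theta^{-1}M)}{(2i)^p}\sim x^{\alpha-2}$, is $O(\theta)$ once $m_0$ is large enough to ensure integrability. This establishes the first displayed identity.

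For the summed statement I would add the per-$j$ identities over $j=0,\dots,n_0$. The $j=0$ term has $s^{>}_0\equiv1$ and no middle integral, so its boundary piece supplies the leading $\frac{1}{2i}x_0^{\alpha-2}T_k^S(x_0^{-1},\theta x_0^\alpha)$ (modulo $O(\theta)$ from $c(x_0)=1+O(\theta)$) and its tail integral supplies the $\theta$-independent constant denoted $r_{n_0,k}(x_0,0)$. For $j\ge1$ the boundary pieces carry $s^{>}_j(x_0)=O(\theta^{j/\alpha})=O(\theta^{1/\alpha})$, the $\theta^{j/\alpha}r_j^k$ pieces are likewise $O(\theta^{1/\alpha})$, and the finitely many genuine $O(\theta)$ errors are all absorbed into the claimed $O(\theta^{1/\alpha})$.

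I expect the main obstacle to be the combinatorial bookkeeping in the second paragraph: organizing the iterated application of Lemma \ref{l:A1} to the nested integrals spawned by the $A_j^l s^{>}_l$ terms, and verifying that at each stage of the descent the surviving integrand behaves like an integrable power $y^{\alpha-2-k}$, so that the accumulated remainders are truly $O(\theta)$. This requires selecting the various integration-by-parts orders large enough in a coordinated way, relying throughout on the uniform bounds and the $\theta^{j/\alpha}$ smallness of the $s^{>}_j$ furnished by Lemmas \ref{l:A2} and \ref{l:A3}.
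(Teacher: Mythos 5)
Your proposal is correct and takes essentially the same route as the paper's own proof: apply the integration-by-parts identity of Lemma \ref{l:A1} with $m=\theta^{-1}M$ so that the boundary term yields the $x_0^{\alpha-2}T_k^S$ leading piece (Lemma \ref{l:A2}(iii)), control the tail via the integrable bound on $\mathcal{L}^{k+1}(\theta^{-1}M)$ (Lemma \ref{l:A2}(i)--(ii)) together with $\|s^{>}_j\|_\infty\le C_j\theta^{j/\alpha}$, and dispose of the middle integral by inserting the decomposition $c\,\frac{ds^{>}_j}{dy}=\sum_l A_j^l s^{>}_l+r_j$ of Lemma \ref{l:A3}, whose explicit $\theta$-powers make every resulting term admissible, before summing over $j$. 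Your write-up in fact spells out the finite descent and the power counting $\theta^{(j-l)+l/\alpha}\le\theta^{j/\alpha}$ that the paper leaves implicit in its one-line remark that each term ``is bounded (using integration by parts and properties of $s_{>,l'}$).''
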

Let us use Lemma \ref{l:A1}. The proof of this proposition reduces then to proving
$$\frac{1}{2i}\int_{x_0}^{+\infty}\theta y^{\alpha-2}M_k^S(y^{-1}, \theta y^{\alpha})c\frac{ds^{>}_{j}}{dy}e^{-2i\phi(y)}dy=O(\theta),$$
id est $\frac{1}{2i}\int_{x_0}^{+\infty} y^{\alpha-2}M_k^S(y^{-1}, \theta y^{\alpha})c\frac{ds^{>}_{j}}{dy}e^{-2i\phi(y)}dy$ bounded uniformly. Use Lemma \ref{l:A3}. One has
$$\begin{array}{c}\int_{x_0}^{+\infty} y^{\alpha-2}M_k^S(y^{-1}, \theta y^{\alpha})c\frac{ds^{>}_{j}}{dy}e^{-2i\phi(y)}dy\cr
=\int_{x_0}^{+\infty} y^{\alpha-2}M_k^S(y^{-1}, \theta y^{\alpha})[\sum_{l=0}^{j-1}(\theta y^{\alpha})^{j-l}B_{j,l}(y^{-1}, \theta y^{\alpha})s_{>,l}(y)]e^{-2i\phi(y)}dy+r\cr
=\sum_{l'=1}^j\theta^{l'}\int_{x_0}^{+\infty} y^{(\alpha-2)(1+l')}M_k^S(y^{-1}, \theta y^{\alpha})B_{j,j-l'}(y^{-1}, \theta y^{\alpha})s_{>,l}(y)e^{-2i\phi(y)}dy+r\end{array}$$
As every term $\int_{x_0}^{+\infty} y^{(\alpha-2)(1+l')}M_k^S(y^{-1}, \theta y^{\alpha})B_{j,j-l'}(y^{-1}, \theta y^{\alpha})s_{>,l}(y)e^{-2i\phi(y)}dy$ is bounded (using integration by parts and properties of $s_{>, l'}$ for all $l'$), this ends the proof of the first equality of Proposition \ref{p:all-terms}. The second equality is a consequence of $\vert s^{>}_{j}(x)\vert\leq C_j\theta^{\frac{j}{\alpha}}$ for all $j$. Proposition \ref{p:all-terms} is proven.

\section{Properties of the Gamma function}\label{section:gamma}
We recall the expression of the Gamma function by a semi-convergent integral for $0<\delta<1$:
\be\label{semi-convergent}\int_0^{+\infty}t^{\delta-1}e^{-it}dt=i^{-\delta}\Gamma(\delta).\ee
The equality (\ref{semi-convergent}) is a consequence of 
$$\Gamma(\delta)=k^{\delta}\int_0^{+\infty}t^{\delta-1}e^{-kt}dt, \Re k>0$$
The generalization to $\alpha>0$ arbitrary is straightforward, thanks to ($\alpha$ not an integer)
\begin{equation}\label{gamma-0}\Gamma(\alpha+1)=\alpha (\alpha-1)... (\alpha-[\alpha]+1)\Gamma(\alpha-[\alpha]),\end{equation}
which ensures that
$$\Gamma(\alpha+1)=k^{\alpha+1}\int_0^{+\infty}t^{\alpha}e^{-kt}dt, \Re k>0$$
from which one deduces
\begin{equation}\label{eq:Gamma}\frac{\Gamma(\alpha+1)}{(2i)^{\alpha+1}}=\int_0^{x_0}y^{\alpha}e^{-2iy}dy+(2i)^{-n}\int_{x_0}^{+\infty}\frac{d^n}{dy^n}(y^{\alpha})e^{-2iy}dy+e^{-2ix_0}\sum_{p=0}^{n-1}\frac{\frac{d^p}{dy^p}(y^{\alpha})}{(2i)^{p+1}}(x_0).\end{equation}
This equality is a consequence of
$$\Gamma(\delta)= (2i)^{\delta}\int_0^{x_0} t^{\delta-1}e^{-2it}dt+(2i)^{\delta-1}[x_0^{\delta-1}e^{-2ix_0}+\int_{x_0}^{+\infty}(\delta-1)t^{\delta-2}e^{-2it}dt]$$
obtained by choosing $k=\epsilon+2i$, $\epsilon>0$ and to notice that
$$\Gamma(\delta)=(\epsilon +2i)^{\delta}\int_0^{x_0} t^{\delta-1}x_0^{\delta-1}e^{-(\epsilon +2i)t}dt+(\epsilon+2i)^{\delta-1}[e^{-(2i+\epsilon)x_0}+\int_{x_0}^{+\infty}(\delta-1)t^{\delta-2}e^{-(\epsilon +2i)t}dt]$$
then using $\epsilon\rightarrow 0_+$ and the dominated convergence theorem\\
Choose $\delta=\alpha-[\alpha]\in (0,1)$. One then has, thanks to (\ref{gamma-0})
$$\Gamma(\alpha+1)= (2i)^{\delta}\int_0^{x_0} \alpha.(\alpha-1)....\delta t^{\delta-1}e^{-2it}dt+(2i)^{\delta-1}[\alpha.(\alpha-1)....\delta x_0^{\delta-1}e^{-2ix_0}+\int_{x_0}^{+\infty}\alpha.(\alpha-1)....\delta(\delta-1)t^{\delta-2}e^{-2it}dt],$$
which rewrites
$$\Gamma(\alpha+1)= (2i)^{\delta}\int_0^{x_0} \frac{d^{[\alpha]+1}}{dt^{[\alpha]+1}}(t^{\alpha})e^{-2it}dt+(2i)^{\delta-1}[\alpha.(\alpha-1)....\delta x_0^{\delta-1}e^{-2ix_0}+\int_{x_0}^{+\infty}\frac{d^{[\alpha]+1}}{dt^{[\alpha]+1}}(t^{\alpha})e^{-2it}dt],$$
Using repetitively integration by parts on the first term leads to equality (\ref{eq:Gamma}).

\end{document}